\documentclass[a4paper,numbook,babel,final,envcountsame,11pt]{article}

\usepackage{amssymb}
\usepackage{amsthm}
\usepackage{amsmath}
\usepackage{graphicx}
\usepackage{array,hhline}


\newtheorem{lemma}{Lemma}[section]
\newtheorem{theorem}[lemma]{Theorem}
\newtheorem{proposition}[lemma]{Proposition}
\newtheorem{conjecture}[lemma]{Conjecture}
\newtheorem{corollary}[lemma]{Corollary}
\theoremstyle{definition}
\newtheorem{definition}[lemma]{Definition}
\newtheorem{remark}[lemma]{Remark}

\numberwithin{equation}{section}
\numberwithin{figure}{section}

\newcommand{\Nset}{\mathcal{N}}

\newcommand{\Xset}{\mathcal{X}}
\newcommand{\Yset}{\mathcal{Y}}


\begin{document}

\title{\huge On a new property of $n$-poised and $GC_n$ sets}         
\author{Vahagn Bayramyan, Hakop Hakopian}        
\date{}          

\maketitle

\begin{abstract}
In this paper we consider n-poised planar node sets, as well
as more special ones, called $GC_n$-sets. For these sets all
$n$-fundamental polynomials are products of n linear factors as it
always takes place in the univariate case. A line ${\ell}$ is called
$k$-node line for a node set $\mathcal X$ if it passes through
exactly $k$ nodes. An $(n+1)$-node line is called maximal line. In
1982 M. Gasca and J. I. Maeztu conjectured that every $GC_n$-set
possesses necessarily a maximal line. Till now the conjecture is
confirmed to be true for $n \le 5$. It is well-known that any
maximal line $M$ of $\mathcal X$ is used by each node in $\mathcal
X\setminus M,$ meaning that it is a factor of the fundamental polynomial
of each node. In this paper we prove, in particular, that if the
Gasca-Maeztu conjecture is true then any $n$-node line of $GC_n$-set
$\Xset$ is used either by exactly $\binom{n}{2}$ nodes or by exactly
$\binom{n-1}{2}$ nodes. We prove also similar statements concerning
$n$-node or $(n-1)$-node lines in more general $n$-poised sets. This
is a new phenomenon in $n$-poised and $GC_n$ sets. At the end we
present a conjecture concerning any $k$-node line.
\end{abstract}
{\bf Key words:} Polynomial interpolation, Gasca-Maeztu conjecture,
$n$-poised set, $n$-independent set, $GC_n$-set, fundamental
polynomial, algebraic curve, maximal curve, maximal line.

{\bf Mathematics Subject Classification (2010):} \\
primary: 41A05, 41A63; secondary 14H50.

\section{Introduction and background\label{sec:intro}}
Let $\Pi_n$ be the space of bivariate polynomials of total degree
at most $n:$
\begin{equation*}
\Pi_n=\left\{\sum_{i+j\leq{n}}a_{ij}x^iy^j
\right\}.
\end{equation*}
We have that
\begin{equation} \label{N=}
N:=\dim \Pi_n=\binom{n+2}{2}.
\end{equation}
We say that a polynomial $q$ is of degree $k$ if $q\in \Pi_k\setminus \Pi_{k-1}.$
Consider a set of $s$ distinct nodes
\begin{equation*}
{\mathcal X}_s=\{ (x_1, y_1), (x_2, y_2), \dots , (x_s, y_s) \} .
\end{equation*}
The problem of finding a polynomial $p \in \Pi_n$ which satisfies
the conditions
\begin{equation}\label{int cond}
p(x_i, y_i) = c_i, \ \ \quad i = 1, 2, \dots s  ,
\end{equation}
is called interpolation problem.

Let us now describe briefly the content of the paper. We consider
here $n$-poised node sets for which the bivariate interpolation problem is
unisolvent. We pay a special attention to a subclass of these sets
called $GC_n$-sets. In such sets all $n$-fundamental polynomials,
i.e., polynomials of total degree $n$ vanishing at all nodes but
one, are products of $n$ linear factors. Note that this condition
always takes place in the univariate case. A line ${\ell}$ is called
$k$-node line for $\mathcal X$ if it passes through exactly $k$
nodes of $\mathcal X.$ It is easily seen that at most $n+1$ nodes in
an $n$-posed set (and therefore in a $GC_n$ set) can be collinear.
That is why $(n+1)$-node line is called maximal line. In 1982 M.
Gasca and J. I. Maeztu conjectured \cite{GM82} that every $GC_n$-set
possesses necessarily a maximal line. Till now the conjecture is
confirmed to be true for $n \le 5$ (see Subsection \ref{ss:GMconj}).
We say that a node of an $n$-poised or $GC_n$-set uses a line if the
line is a factor in the fundamental polynomial of the node. It is
well-known that any maximal line $M$ of $\mathcal X$ is used by all
nodes in $\mathcal X\setminus M.$ Note that this statement, as well
as the previous one concerning the maximal number of collinear
nodes, follow readily from a well-known and simple fact that a
bivariate polynomial of total degree at most $n$ vanishes on a line
if it vanishes at $n+1$ points in the line (see forthcoming
Proposition \ref{prp:n+1points}). In Section \ref{s:klgcn} we prove
that the subset of nodes of $\Xset$ using a given $k$-node line is
$(k-2)$-independent, meaning that each node of the subset possesses
a fundamental polynomial of total degree not exceeding $k-2.$ In
Sections \ref{s:nlgcn} and \ref{s:pnlgcn} we prove a main result of
this paper. Namely, if the Gasca-Maeztu conjecture is true then any
$n$-node line of a $GC_n$-set $\Xset$ is used either by exactly
$\binom{n}{2}$ nodes or by exactly $\binom{n-1}{2}$ nodes. In
Sections \ref{s:nlnp} and \ref{s:n-1lnp} similar statements are
proved concerning $n$-node or $(n-1)$-node lines in $n$-poised sets.
Let us mention that this is a new phenomenon in $n$-poised and
$GC_n$ sets. At the end we present a conjecture concerning any
$k$-node line.

Now let us go to exact definitions and formulations.
\begin{definition}
The set of nodes ${\mathcal X}_s$ is called \emph{$n$-poised } if
for any data $\{c_1, \dots, c_s\}$ there exists a unique polynomial
$p \in \Pi_n$, satisfying the conditions \eqref{int cond}.
\end{definition}
A polynomial $p \in \Pi_n$ is called an \emph{$n$-fundamental
polynomial} for a node $ A = (x_k, y_k) \in {\mathcal X}_s$ if
\begin{equation*}
p(x_i, y_i) = \delta _{i k},\quad  i = 1, \dots , s ,
\end{equation*}
where $\delta$ is the Kronecker symbol. We denote the
$n$-fundamental polynomial of $A \in{\mathcal X}_s$ by $p_A^\star =
p_{A, {\mathcal X}_s}^\star.$ Sometimes we call fundamental also a
polynomial that vanishes at all nodes but one, since it is a nonzero
constant times the fundamental polynomial.

\noindent In view of the uniqueness we get readily that for any
$n$-poised set the degree of each fundamental polynomial equals to
$n.$

A necessary condition of $n$-poisedness of $\Xset_s$ is:
$|{\mathcal X}_s|=s= N.$

The following is a Linear Algebra fact:
\begin{proposition} \label{prp:poised}
The set of nodes ${\mathcal X}_N$ is $n$-poised if and only if the
following implication holds for any polynomial $p \in \Pi_n:$
$$p(x_i,
y_i) = 0, \quad i = 1, \dots , N \Rightarrow p = 0.$$
\end{proposition}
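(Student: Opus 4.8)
The plan is to recast $n$-poisedness as a statement about a single linear map and then invoke the rank--nullity theorem. Consider the evaluation (sampling) operator
\[
L\colon \Pi_n \to \mathbb{R}^N,\qquad L(p) = \bigl(p(x_1,y_1),\dots,p(x_N,y_N)\bigr),
\]
which is clearly linear. By its very definition, ${\mathcal X}_N$ is $n$-poised precisely when, for every right-hand side $(c_1,\dots,c_N)\in\mathbb{R}^N$, the equation $L(p)=(c_1,\dots,c_N)$ has exactly one solution $p\in\Pi_n$; that is, when $L$ is a bijection. On the other hand, the implication in the statement asserts exactly that $\ker L=\{0\}$, i.e.\ that $L$ is injective. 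So the proposition amounts to the equivalence ``$L$ bijective $\iff$ $L$ injective''.

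One direction is immediate: a bijection is in particular injective. For the converse I would use that $L$ is a linear map between two \emph{finite-dimensional} spaces of the \emph{same} dimension: indeed $\dim\Pi_n=N$ by \eqref{N=}, while $\dim\mathbb{R}^N=N$ as well. Hence by rank--nullity $\dim\operatorname{im}L=N-\dim\ker L$, so $\ker L=\{0\}$ forces $\dim\operatorname{im}L=N$, i.e.\ $L$ is surjective and therefore bijective. This also makes explicit the useful byproduct that for $n$-poisedness one may equivalently check only uniqueness (injectivity) or only existence (surjectivity) of the interpolant.

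There is essentially no hard step here; the only thing one must not omit is the remark that source and target have the same finite dimension $N$ — precisely the content of \eqref{N=} together with the necessary condition $s=N$ noted above — since without equal dimensions the equivalence would fail. If a fully self-contained argument avoiding rank--nullity were preferred, one could instead fix a basis $\{m_1,\dots,m_N\}$ of $\Pi_n$ (say the monomials $x^iy^j$ with $i+j\le n$), form the $N\times N$ collocation matrix $V=\bigl(m_j(x_i,y_i)\bigr)_{i,j}$, and observe that $n$-poisedness is equivalent to $\det V\neq 0$, which is equivalent to the homogeneous system $V c=0$ having only the trivial solution — a verbatim restatement of the displayed implication. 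Either route is routine linear algebra.
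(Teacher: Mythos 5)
Your proof is correct and is exactly the standard linear-algebra argument the paper has in mind (it states the proposition as a "Linear Algebra fact" without proof): the evaluation map $L:\Pi_n\to\mathbb{R}^N$ is linear between spaces of equal dimension $N=\binom{n+2}{2}$, so injectivity, surjectivity, and bijectivity coincide. Nothing is missing; the one point you rightly emphasize — that $\dim\Pi_n=N$ equals the number of nodes — is indeed the only place where the hypothesis $|{\mathcal X}_N|=N$ enters.
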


\subsection{$n$-independent and $n$-dependent sets}

Next we introduce an important concept of $n$-dependence of node sets:
\begin{definition}
A set of nodes ${\mathcal X}$ is called \emph{$n$-independent} if
all its nodes have fundamental polynomials. Otherwise, ${\mathcal
X}$ is called \emph{$n$-dependent.}
\end{definition}
\noindent Clearly fundamental polynomials are linearly independent.
Therefore a necessary condition of $n$-independence is $|{\mathcal
X}| \le N.$

Suppose a node set ${\mathcal X}_s$ is $n$-independent. Then by using the Lagrange formula:
\begin{equation*}
p = \sum_{A\in{\mathcal X}_s} c_{A} p_{{A}, {\mathcal X}_s}^\star
\end{equation*}
we obtain a polynomial $p \in \Pi_n$ satisfying the
interpolation conditions \eqref{int cond}.

\noindent Thus we get a simple characterization of $n$-independence:

\noindent A node set ${\mathcal X}_s$ is $n$-independent if and only
if the interpolation problem \eqref{int cond} is
\emph{$n$-solvable,} meaning that for any data $\{c_1, \dots , c_s
\}$ there exists a (not necessarily unique) polynomial $p \in \Pi_n$
satisfying the conditions \eqref{int cond}.

Now suppose that ${\mathcal X}_s$ is $n$-dependent. Then some node
$(x_{i_0}, y_{i_0})$, does not possess an $n$-fundamental
polynomial. This means that the following implication holds for any
polynomial $p \in \Pi_n:$
$$p(x_i, y_i) = 0, \quad i \in \{1, \dots , s\} \setminus\{i_0\} \Rightarrow p(x_{i_0}, y_{i_0}) =
0.$$

In this paper we will deal frequently with a stronger version of $n$-dependence:

\begin{definition}
A set of nodes ${\mathcal X}$ is called \emph{essentially
$n$-dependent} if none of its nodes possesses a fundamental
polynomial.
\end{definition}

Below, and frequently in the sequel, we use same the notation for a polynomial $q\in\Pi_k$ and the curve described by the
equation $q(x,y)=0$.
\begin{remark}\label{rem:essdep}
Suppose a set of nodes ${\mathcal X}$ is essentially $n$-dependent
and $q\in \Pi_k, \ k\le n,$ is a curve. Then we have that the subset
$\Xset':=\Xset\setminus q$ is essentially $(n-k)$-dependent,
provided that $\Xset'\neq\emptyset.$
\end{remark}
\noindent Indeed, suppose conversely that a node $A\in \Xset'$ has
an $(n-k)$-fundamental polynomial $r\in\Pi_{n-k}.$ Then the
polynomial $qr\in\Pi_n$ is an $n$ fundamental polynomial of the node $A$ in
$\Xset,$ which contradicts our assumption.

\begin{definition}
Given an $n$-poised set $ {\mathcal X}$, we say that a node
$A\in{\mathcal X}$ \emph{uses a curve $q\in \Pi_k$,} if $q$ divides
the fundamental polynomial $p_{A, {\mathcal X}}^\star :$
\begin{equation*}
  p_{A,\Xset}^\star = q r, \quad \text{where} \quad r\in\Pi_{n-k}.
\end{equation*}
\end{definition}

The following proposition is well-known (see e.g. \cite{HJZ09a}
Proposition 1.3):
\begin{proposition}\label{prp:n+1points}
Suppose that  $\ell$ is a line. Then for any polynomial $p \in
\Pi_n$ vanishing at $n+1$ points of $\ell$ we have
\begin{equation*}
p = \ell  r  ,\quad \text{where} \quad r\in\Pi_{n-1}.
\end{equation*}
\end{proposition}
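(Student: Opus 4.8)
The plan is to work in a coordinate system adapted to the line $\ell$. First I would apply an affine change of variables so that $\ell$ becomes the coordinate axis $\{y=0\}$; since affine transformations preserve total degree and map $\Pi_n$ bijectively onto itself, it suffices to prove the statement in this normalized situation. So assume $\ell=\{y=0\}$ and let $p\in\Pi_n$ vanish at $n+1$ distinct points $(t_1,0),\dots,(t_{n+1},0)$ with the $t_i$ pairwise distinct.

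Next I would write $p(x,y)=\sum_{j=0}^n y^j\, q_j(x)$ by collecting powers of $y$, where $\deg q_j\le n-j$ because $p$ has total degree at most $n$; in particular $q_0\in\Pi_n$ as a univariate polynomial. Setting $y=0$ gives $p(x,0)=q_0(x)$, and the hypothesis says $q_0(t_i)=0$ for $i=1,\dots,n+1$. A nonzero univariate polynomial of degree at most $n$ cannot have $n+1$ distinct roots, so $q_0\equiv 0$. Therefore every term of $p$ contains at least one factor of $y$, i.e.
\begin{equation*}
p(x,y)=\sum_{j=1}^n y^j q_j(x) = y\sum_{j=1}^n y^{j-1} q_j(x) = y\,r(x,y),
\end{equation*}
and the total degree of $r(x,y)=\sum_{j=1}^n y^{j-1}q_j(x)$ is at most $n-1$ since each monomial $y^{j-1}q_j(x)$ has degree at most $(j-1)+(n-j)=n-1$. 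Thus $r\in\Pi_{n-1}$. Finally I would transport this factorization back through the inverse affine change of variables: if $\ell$ in the original coordinates is $\ell(x,y)=0$, then $p=\ell\cdot r$ with $r\in\Pi_{n-1}$, as claimed.

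I do not expect a serious obstacle here; the only points requiring a little care are the bookkeeping of degrees (checking $\deg q_j\le n-j$ from the total-degree hypothesis, and that this forces $\deg r\le n-1$) and the observation that one may assume WLOG that $\ell$ is a coordinate axis. If one prefers to avoid coordinate changes, an alternative is to argue directly: parametrize $\ell$ affinely and note that $p$ restricted to $\ell$ is a univariate polynomial of degree $\le n$ vanishing at $n+1$ points hence identically zero, so $\ell\mid p$ in the polynomial ring; a degree count on the quotient then gives $r\in\Pi_{n-1}$. Either route is routine.
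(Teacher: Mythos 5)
Your proof is correct: normalizing $\ell$ to the axis $y=0$, observing that the restriction $p(x,0)=q_0(x)$ is a univariate polynomial of degree at most $n$ with $n+1$ distinct roots and hence vanishes identically, and then checking that the quotient $r$ has total degree at most $n-1$ is exactly the standard argument. The paper itself states this proposition as well-known and cites \cite{HJZ09a} rather than proving it, so there is no in-text proof to compare against; your write-up, including the degree bookkeeping $\deg q_j\le n-j$, fills that gap correctly.
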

Evidently, this implies that any set of $n+2$ collinear nodes is
essentially $n$-dependent. We also obtain from Proposition
\ref{prp:n+1points}
\begin{corollary}\label{cor:n+1points} The following hold for any $n$-poised node set $\Xset:$
\begin{enumerate}
\item
At most $n+1$ nodes of $\Xset$ can be collinear;
\item
A line $\ell$
containing $n+1$ nodes of $\Xset$ is used by all the nodes in ${\mathcal
X}\setminus \ell$.
\end{enumerate}
\end{corollary}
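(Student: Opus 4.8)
The plan is to obtain both assertions as immediate consequences of Proposition \ref{prp:n+1points}, using only the defining property of fundamental polynomials in an $n$-poised set.

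For part (i) I would argue by contradiction. Suppose some line $\ell$ contains $n+2$ nodes of $\Xset$, and pick one of them, say $A$. Since $\Xset$ is $n$-poised, $A$ has an $n$-fundamental polynomial $p_{A,\Xset}^\star\in\Pi_n$, and this polynomial vanishes at the remaining $n+1$ nodes of $\Xset$ lying on $\ell$. Applying Proposition \ref{prp:n+1points} to these $n+1$ collinear zeros, we get $p_{A,\Xset}^\star=\ell r$ with $r\in\Pi_{n-1}$; in particular $p_{A,\Xset}^\star$ vanishes at every point of $\ell$, hence at $A$ itself, contradicting $p_{A,\Xset}^\star(A)=1$. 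Therefore no line can carry $n+2$ nodes of $\Xset$.

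For part (ii), let $\ell$ be a line containing $n+1$ nodes of $\Xset$ (by part (i) it contains exactly that many), and let $B\in\Xset\setminus\ell$ be arbitrary. The fundamental polynomial $p_{B,\Xset}^\star\in\Pi_n$ vanishes at all nodes of $\Xset$ except $B$, and in particular at the $n+1$ nodes on $\ell$, since none of them equals $B$. Proposition \ref{prp:n+1points} then yields $p_{B,\Xset}^\star=\ell r$ for some $r\in\Pi_{n-1}$, which is precisely the statement that $B$ uses $\ell$. As $B$ was an arbitrary node of $\Xset\setminus\ell$, the conclusion follows. (One may additionally note that the cofactor $r$ is, up to a nonzero constant, the $(n-1)$-fundamental polynomial of $B$ in $\Xset\setminus\ell$, a remark that will be convenient later, though it is not needed here.)

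There is essentially no obstacle in this proof: everything reduces to a single application of Proposition \ref{prp:n+1points}. The only points that require a moment's care are, in part (i), choosing the node $A$ to lie on the offending line so that the other $n+1$ collinear nodes already force $\ell\mid p_{A,\Xset}^\star$ and thereby produce the contradiction at $A$; and in part (ii), observing that a node off $\ell$ indeed sees all $n+1$ on-line nodes among its zeros, which is exactly the hypothesis required to invoke the proposition.
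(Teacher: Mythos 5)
Your proof is correct and is exactly the argument the paper intends: both parts are read off from Proposition \ref{prp:n+1points} applied to the relevant fundamental polynomial, with the contradiction in (i) coming from $\ell\mid p_{A,\Xset}^\star$ forcing $p_{A,\Xset}^\star(A)=0$. The paper leaves this as an evident consequence, and your write-up fills in precisely those details.
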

In view of this a line $\ell$ containing $n+1$
nodes of an $n$-poised set ${\mathcal X}$ is called a maximal line
(see \cite{dB07}).

One can verify readily the following two properties of maximal lines
of $n$-poised set $\Xset:$
\begin{enumerate}
\item
Any two maximal lines of $\Xset$ intersect necessarily at a node of $\Xset;$
\item
Three maximal lines of $\Xset$ cannot meet in one node.
\end{enumerate}

Thus, in view of \eqref{N=}, there are no $n$-poised sets with more than $n+2$ maximal
lines.

\subsection{Some results on $n$-independence}

Let us start with the following simple but important result of Severi \cite{S}:
\begin{theorem}[Severi]\label{thm:severi}
Any node set $\Xset$ consisting of at most $n+1$ nodes is
$n$-independent.
\end{theorem}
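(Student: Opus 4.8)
The plan is to prove $n$-independence directly from its definition by exhibiting, for an arbitrarily chosen node, an explicit polynomial of degree at most $n$ that is a nonzero scalar multiple of its fundamental polynomial. Write $\Xset=\{A_1,\dots,A_s\}$ with $s\le n+1$, and fix an index $k$. It suffices to produce $p\in\Pi_n$ with $p(A_k)\neq 0$ and $p(A_j)=0$ for all $j\neq k$; then $p/p(A_k)=p_{A_k,\Xset}^\star$, and since $k$ was arbitrary this shows every node of $\Xset$ has a fundamental polynomial, i.e. $\Xset$ is $n$-independent.

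For the construction, proceed node by node. For each $j\neq k$ the points $A_j$ and $A_k$ are distinct, so among the infinitely many lines through $A_j$ only one passes through $A_k$; hence we may choose a line $\ell_j$ with $\ell_j(A_j)=0$ and $\ell_j(A_k)\neq 0$. Put $p:=\prod_{j\neq k}\ell_j$. Then $\deg p=s-1\le n$, so $p\in\Pi_n$; we have $p(A_j)=0$ for every $j\neq k$ since the factor $\ell_j$ vanishes at $A_j$; and $p(A_k)=\prod_{j\neq k}\ell_j(A_k)\neq 0$ because every factor is nonzero at $A_k$. (In the degenerate case $s=1$ there are no factors and one takes $p\equiv 1$.) This completes the argument.

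There is essentially no obstacle here: the only points that merit a word are the existence of a line through $A_j$ avoiding $A_k$, which is immediate, and the degree bound $s-1\le n$, which is precisely the hypothesis $|\Xset|\le n+1$. It is worth noting that the fundamental polynomial produced is a product of at most $n$ linear factors — the very property that, by definition, characterizes $GC_n$-sets — and that the proof uses no poisedness assumption on any ambient set: the statement concerns $\Xset$ in isolation.
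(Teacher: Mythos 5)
Your proof is correct. The paper itself gives no argument for this statement --- it is quoted as a classical result of Severi with a reference --- and what you have written is precisely the standard proof: for each node, take the product of $s-1\le n$ lines, one passing through each of the other nodes and avoiding the chosen one; the degree bound is exactly the hypothesis $|\Xset|\le n+1$, and the degenerate case $s=1$ is handled by the constant polynomial. Nothing is missing.
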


\noindent Next we consider node sets consisting of at most $2n+1$
nodes:
\begin{proposition}[\cite{EGH}]\label{prp:2n+1}
Any node set $\Xset$ consisting of at most $2n+1$ nodes is
$n$-dependent if and only if $n+2$ nodes are collinear.
\end{proposition}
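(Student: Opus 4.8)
The statement splits into two implications, and the ``if'' direction is immediate from what precedes. If $\Xset$ contains $n+2$ collinear nodes, then already that subset of $\Xset$ is essentially $n$-dependent by the consequence of Proposition~\ref{prp:n+1points} recorded above; in particular one of its nodes has no fundamental polynomial. That same node has no fundamental polynomial relative to $\Xset$ either, for such a polynomial would vanish at all nodes of $\Xset$ but one, hence at all nodes of the subset but one. Thus $\Xset$ is $n$-dependent.

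The ``only if'' direction is the substantial one. Assume $s:=|\Xset|\le 2n+1$ and that no $n+2$ nodes of $\Xset$ are collinear. It suffices to produce, for each $A\in\Xset$, a nonzero polynomial $p\in\Pi_n$ that vanishes on $\Yset:=\Xset\setminus\{A\}$ and satisfies $p(A)\neq 0$; then $p/p(A)$ is the fundamental polynomial of $A$ and $\Xset$ is $n$-independent. (If $\Yset=\emptyset$ take $p=1$; one may also dispose of the range $s\le n+1$ at once by Theorem~\ref{thm:severi}.) The plan is to cover $\Yset$ by at most $n$ lines, none of them passing through $A$, and let $p$ be the product of those lines: such a $p$ lies in $\Pi_n$, vanishes on $\Yset$, and is nonzero at $A$.

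To build the lines I would use two facts. First, the lines through $A$ partition $\Yset$ into classes $C_1,\dots,C_t$, with two nodes of $\Yset$ lying in one class precisely when they are collinear with $A$; since $C_i\cup\{A\}$ consists of collinear nodes of $\Xset$, the hypothesis forces $|C_i|\le n$. Second, $|\Yset|=s-1\le 2n$. The combinatorial core is then the elementary assertion that $\Yset$ can be written as a union of at most $n$ subsets of size $\le 2$ in which every two-element subset meets two distinct classes. Ordering the classes by decreasing size: if the largest class holds at most half of the at most $2n$ elements, a routine pairing groups $\Yset$ into cross-class pairs plus at most one leftover node, that is, at most $\lceil |\Yset|/2\rceil\le n$ pieces; otherwise, matching every node outside the largest class to a distinct node of it leaves the surplus of that class as singletons, that is, exactly $|C_1|\le n$ pieces. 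Either way there are at most $n$ pieces. Now to a two-element piece $\{B,B'\}$ assign the line $\overline{BB'}$, which avoids $A$ because $B$ and $B'$ lie in different classes; to a singleton $\{B\}$ assign any of the infinitely many lines through $B$ other than $\overline{AB}$. The product of these (at most $n$) lines is the required $p$.

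The only step needing real care is the pairing assertion together with its split according to whether the largest class forms a strict majority of $\Yset$; the rest is bookkeeping. Note that, in contrast with an induction on $n$, this argument treats all admissible values of $s$ uniformly, and that the ``if'' direction uses nothing beyond Proposition~\ref{prp:n+1points}.
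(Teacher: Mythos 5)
The paper does not prove Proposition~\ref{prp:2n+1}; it is quoted from \cite{EGH} (and its multiple-node generalization from \cite{H00}), so there is no in-paper argument to compare against. Your proof is correct and self-contained. The ``if'' direction is exactly the observation following Proposition~\ref{prp:n+1points} (a set of $n+2$ collinear nodes is essentially $n$-dependent), correctly transferred to the ambient set by restriction of a putative fundamental polynomial. For the converse you give a constructive covering argument: partition $\Yset=\Xset\setminus\{A\}$ by the pencil of lines through $A$, note each class has at most $n$ elements because no $n+2$ nodes of $\Xset$ are collinear, and cover the at most $2n$ nodes of $\Yset$ by at most $n$ lines avoiding $A$ (cross-class pairs plus singletons), whose product is the desired fundamental polynomial. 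The only nontrivial ingredient is the pairing claim you flag: a multiset of $m$ elements split into classes can be grouped into $\lceil m/2\rceil$ cross-class pairs (with at most one leftover) when no class exceeds $m/2$, and into $c_1$ pieces when the largest class of size $c_1$ is a strict majority; both counts are at most $n$ here since $m\le 2n$ and $c_1\le n$. This is the standard greedy/majority argument and does hold, so the proof is complete. Compared with the usual inductive proof (peel off a line through a maximal collinear subset and reduce the degree), your approach treats all cardinalities $s\le 2n+1$ uniformly and uses nothing beyond Proposition~\ref{prp:n+1points} and elementary combinatorics, at the price of having to justify the pairing lemma explicitly; either route is acceptable.
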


\noindent
For a generalization of above two results for multiple nodes see \cite{S} and \cite{H00}, respectively.

\noindent The third result in this series is the following
\begin{proposition}\label{prp:3n-1}
Any node set $\Xset$ consisting of at most $3n-1$ nodes is
$n$-dependent if and only if at least one of the following holds.
\vspace{-2mm}
\begin{enumerate}
\setlength{\itemsep}{0mm}
\item
$n+2$ nodes are collinear,
\item
$2n+2$ nodes belong to a conic (possibly reducible).
\end{enumerate}
\end{proposition}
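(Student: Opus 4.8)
I would prove the two implications separately. The forward (``if'') implication is the easy one: since a node lacking a fundamental polynomial in a subset lacks one in the whole set as well, it suffices to show that every set of $n+2$ collinear nodes, and every set of $2n+2$ nodes lying on a conic, is $n$-dependent. For $n+2$ collinear nodes this is already recorded after Proposition~\ref{prp:n+1points}. For $2n+2$ nodes on a conic $\sigma$: if $\sigma$ is irreducible, a fundamental polynomial $p$ of one of these nodes $A$ would vanish at the other $2n+1$ of them, so $\sigma\mid p$ by B\'ezout's theorem, forcing $p(A)=0$, a contradiction; if $\sigma$ is a double line, or a product of two distinct lines one of which carries at least $n+2$ nodes, we are back to the collinear case; in the only remaining situation $\sigma=\ell_1\ell_2$ with $\ell_1\ne\ell_2$ each carrying exactly $n+1$ of the nodes (so no node lies on both), a fundamental polynomial $p$ of a node $A\in\ell_1$ is divisible by $\ell_2$ by Proposition~\ref{prp:n+1points}, its cofactor in $\Pi_{n-1}$ vanishes at the remaining $n$ nodes of $\ell_1$ and so is divisible by $\ell_1$, whence again $p(A)=0$.

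For the converse I would first pass to a minimal $n$-dependent subset: picking a linear dependence among the point-evaluation functionals of $\Xset$ with smallest support, that support $\Yset$ is \emph{essentially} $n$-dependent (so Remark~\ref{rem:essdep} is available for it), and it is enough to prove the conclusion for $\Yset$. Then induct on $n$, the cases $n\le 2$ following from Severi's Theorem~\ref{thm:severi} and Proposition~\ref{prp:2n+1}. Put $m=|\Yset|$. If $m\le 2n+1$, Proposition~\ref{prp:2n+1} already yields $n+2$ collinear nodes; so assume $m\ge 2n+2$ and let $k$ be the maximal number of collinear nodes, attained on a line $\ell$. If $k\ge n+2$ we obtain (i). If $k=n+1$, then $\Yset\setminus\ell$ is essentially $(n-1)$-dependent of size at most $2n-2\le 2(n-1)+1$, so Proposition~\ref{prp:2n+1} at degree $n-1$ produces a line $\ell'\ne\ell$ through $n+1$ nodes of $\Yset$; maximality of $k$ forbids $\ell\cap\ell'$ from being a node, so $\ell\ell'$ is a conic through $2n+2$ nodes and we obtain (ii). If $3\le k\le n$, then $\Yset\setminus\ell$ is essentially $(n-1)$-dependent of size at most $3n-4=3(n-1)-1$, and the inductive hypothesis gives either $n+1$ collinear nodes (impossible, as $k\le n$) or $2n$ nodes on a conic $\sigma$; but then $\Yset\setminus\sigma$, if nonempty, would be essentially $(n-2)$-dependent while also $(n-2)$-independent by Theorem~\ref{thm:severi} (it has at most $n-1$ nodes), so $\Yset\setminus\sigma$ is empty and all $m\ge 2n+2$ nodes lie on $\sigma$, giving (ii) once more.

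This leaves the case $k\le 2$, i.e.\ no three nodes collinear, in which (i) fails and one must exhibit the conic of (ii). The same ``remove a conic, then invoke Theorem~\ref{thm:severi}'' argument shows that a conic through $2n$ or more nodes of $\Yset$ would necessarily contain all of $\Yset$, yielding (ii); so one may assume that every conic carries at most $2n-1$ nodes. If $m\le 3n-2$, peel off a line through two of the nodes: the remaining set is essentially $(n-1)$-dependent of size at most $3(n-1)-1$, and the inductive hypothesis forces either three collinear nodes or $2n$ nodes on a conic — both now excluded — so no such configuration exists and (ii) must hold. Peeling off instead a conic through five nodes and running the inductive hypothesis at degree $n-2$ (together with Proposition~\ref{prp:2n+1}), one reduces the last value $m=3n-1$ to the sub-case in which, in addition, no six nodes lie on a conic.

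Thus the genuinely difficult case, and the one I expect to be the main obstacle, is $m=3n-1$ with the nodes in general position — no three collinear, no six on a conic — which must be shown to be $n$-independent, contradicting the essential $n$-dependence of $\Yset$. This case does not reduce to Theorem~\ref{thm:severi}, to Proposition~\ref{prp:2n+1}, or to lower-degree instances of Proposition~\ref{prp:3n-1} by the peeling arguments above; for $n=3$ it is exactly the classical statement that eight points impose independent conditions on cubics unless five are collinear or all eight lie on a conic. I would settle it by a direct argument of Cayley--Bacharach type: exploiting the minimality of $\Yset$ to describe the space of curves in $\Pi_n$ vanishing on $\Yset\setminus\{A\}$ for a node $A$, and B\'ezout's theorem to locate its base locus, so as to force either an unexpected line through three of the nodes or an unexpected conic through six of them, and hence the required contradiction.
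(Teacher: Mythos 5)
Your forward implication and your chain of reductions for the converse are essentially sound: passing to a minimal (hence essentially $n$-dependent) subset $\Yset$, splitting on the maximal number $k$ of collinear nodes, and peeling off a line or a conic via Remark~\ref{rem:essdep}, Theorem~\ref{thm:severi} and Proposition~\ref{prp:2n+1} does correctly dispose of every case except the one you isolate at the end. But that terminal case --- $|\Yset|=3n-1$ with no three nodes collinear and no conic carrying too many nodes, which must be shown to be $n$-independent --- is precisely where the entire content of the proposition lives, and there you have written a plan rather than a proof. Saying you would use ``a direct argument of Cayley--Bacharach type'' to ``force either an unexpected line through three of the nodes or an unexpected conic through six of them'' names the desired conclusion without supplying the mechanism: you do not identify the space of curves in $\Pi_n$ through $\Yset\setminus\{A\}$, explain why its base locus must contain a low-degree component, or show how B\'ezout extracts the forbidden line or conic. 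Already for $n=3$ this case is the nontrivial classical fact that eight points impose independent conditions on cubics unless five are collinear or all eight lie on a conic; for general $n$ it is the heart of the classification of $n$-dependent sets of $\le 3n$ points. So the proof is genuinely incomplete.

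For comparison, the paper does not prove Proposition~\ref{prp:3n-1} directly at all: it observes that it is a special case of Theorem~\ref{thm:3n}, quoted from \cite{HM12}, whose third alternative requires $|\Xset|=3n$ exactly and is therefore vacuous for a set of at most $3n-1$ nodes, so that only alternatives (i) and (ii) survive. The missing step in your argument is exactly what the proof of that theorem supplies; you should either carry it out in full or cite it.
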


Let us mention that this result as well as the two previous results
are special cases of the following

\begin{theorem}[\cite{HM12}, Thm.~5.1]\label{thm:3n}
Any node set $\Xset$ consisting of at most $3\,n$ nodes is
$n$-dependent if and only if at least one of the following holds.
\vspace{-2mm}
\begin{enumerate}
\setlength{\itemsep}{0mm}
\item
$n+2$ nodes are collinear,
\item
$2\,n+2$ nodes belong to a conic,
\item
$|\Xset| = 3\,n,$ and there is a cubic $\gamma\in\Pi_3$ and an
algebraic curve $\sigma\in\Pi_n$ such that $\Xset = \gamma \cap
\sigma$\,.
\end{enumerate}
\end{theorem}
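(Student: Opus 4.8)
\emph{Sufficiency.} Enlarging a node set preserves $n$-dependence, so it suffices to handle the three extremal configurations. If $n+2$ nodes are collinear, they form an essentially $n$-dependent set by Proposition~\ref{prp:n+1points}. If $2n+2$ nodes lie on a conic $\beta$, I would observe that $\dim\Pi_n|_\beta=2n+1$ in every case --- $\beta$ irreducible, a pair of distinct lines, or a double line (for the last, $\dim\Pi_n-\dim\Pi_{n-2}=2n+1$) --- so $2n+2$ nodes of $\beta$ cannot impose independent conditions on $\Pi_n$. Finally, if $\Xset=\gamma\cap\sigma$ with $\deg\gamma=3$, $\deg\sigma=n$ and $|\Xset|=3n$, then $\Xset$ has the Cayley--Bacharach property: any $p\in\Pi_n$ vanishing at all nodes of $\Xset$ but one, say $A$, vanishes at $A$ as well. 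When $\gamma$ is a smooth cubic I would prove this by residuation: here automatically $\gamma\nmid p$, so $p\cap\gamma$ and $\sigma\cap\gamma$ are divisors of degree $3n$ on $\gamma$, both containing $\Xset\setminus\{A\}$ and both cut out by degree-$n$ forms, hence linearly equivalent; writing $p\cap\gamma=(\Xset\setminus\{A\})+B$ and $\sigma\cap\gamma=(\Xset\setminus\{A\})+A$ we get $B\sim A$ on $\gamma$, which forces $B=A$ on a curve of genus one --- contradicting $p(A)\neq0$. The cases in which $\gamma$ is reducible or singular are handled by the same idea (pushing the residuation argument one step further, peeling off components of $\gamma$ that divide $p$ and using that a degree-$k$ polynomial vanishing at $k+1$ collinear points contains the line), or reduce to cases (i)--(ii). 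In all three situations $\Xset$ is $n$-dependent.

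\emph{Necessity --- reductions.} Let $\Xset$ be $n$-dependent with $|\Xset|\le3n$. First I would replace $\Xset$ by the support of a nontrivial linear relation among the node evaluations, so that $\Xset$ is essentially $n$-dependent; this is harmless, since a smaller realization of (i) or (ii) is inherited by the original set and a smaller realization of (iii) already coincides with it. By Theorem~\ref{thm:severi}, $|\Xset|\ge n+2$. If some line carries $\ge n+2$ nodes we are in case (i). If a line $\ell$ carries exactly $n+1$ nodes, then $\Xset\setminus\ell$ is a nonempty set of at most $2n-1$ nodes, essentially $(n-1)$-dependent by Remark~\ref{rem:essdep}; Proposition~\ref{prp:2n+1} then puts $n+1$ of its nodes on a line $\ell'\neq\ell$, and according as $\ell\cap\ell'$ is or is not a node this yields $n+2$ nodes on $\ell'$ (case (i)) or $2n+2$ nodes on the conic $\ell\cup\ell'$ (case (ii)). So I may assume every line carries at most $n$ nodes. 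If now $|\Xset|\le3n-1$, Proposition~\ref{prp:3n-1} forces (i) or (ii); and if some $\Xset\setminus\{A\}$ is still $n$-dependent, then --- having at most $3n-1$ nodes and no $n+2$ collinear --- it gives, again by Proposition~\ref{prp:3n-1}, a conic through $2n+2$ nodes of $\Xset$, i.e. (ii). We are thus reduced to: $|\Xset|=3n$, every $\Xset\setminus\{A\}$ is $n$-independent, and (since any $2n+2$ nodes on a conic are $n$-dependent) no conic carries $2n+2$ nodes. It remains to deduce (iii).

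\emph{Necessity --- the cubic and the complete intersection.} The decisive point is that under these hypotheses $\Xset$ lies on a cubic $\gamma\in\Pi_3$. For $n\le3$ this is automatic, as $|\Xset|=3n<\dim\Pi_3$; for $n\ge4$, where $3n>\dim\Pi_3$, no cubic is forced by parameter count, and the cubic must be extracted from the essential-dependence structure. I would do so by induction on $n$: remove a carefully chosen auxiliary line or conic meeting $\Xset$ in enough nodes, apply the inductive hypothesis together with Propositions~\ref{prp:2n+1} and~\ref{prp:3n-1} to the residual set --- which is essentially $(n-1)$- or $(n-2)$-dependent and small enough to fall in the inductive range --- and reassemble a cubic, invoking the exclusions already in force (no $n+1$ collinear, no $2n+2$ on a conic) to discard the degenerate outcomes. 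Granting such a $\gamma\supseteq\Xset$, set $V=\{\sigma\in\Pi_n:\sigma|_\Xset=0\}$; from $|\Xset|=3n$ one computes $\dim V=\binom{n+2}{2}-3n+1=\dim(\gamma\,\Pi_{n-3})+1$, while for every irreducible component of $\gamma$ the subspace of $V$ consisting of those $\sigma$ divisible by that component is proper (this is where ``at most $n$ nodes on a line'' and ``at most $2n+1$ nodes on a conic'' enter). Since the real vector space $V$ is not a union of finitely many proper subspaces, it contains some $\sigma$ having no component in common with $\gamma$; then $|\gamma\cap\sigma|\le3n$ by B\'ezout, and as $\Xset\subseteq\gamma\cap\sigma$ with $|\Xset|=3n$ we conclude $\Xset=\gamma\cap\sigma$, which is (iii).

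\emph{Main obstacle.} The heart of the matter is the existence of the cubic for $n\ge4$: being invisible to parameter counting, it has to be squeezed out of the minimal essentially $n$-dependent configuration, and I expect the delicate steps to be the choice of auxiliary curve in the induction --- in particular dealing with $3n$ nodes in very general position, which is exactly the generic shape of a complete intersection of a cubic and a degree-$n$ curve --- and showing that what the inductive hypothesis returns glues into a single cubic. Everything else --- the sufficiency direction, the reductions above, and the B\'ezout step promoting ``lies on a cubic'' to ``is a complete intersection'' --- is routine by comparison.
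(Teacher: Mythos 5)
This theorem is quoted in the paper from \cite{HM12} without proof, so there is no in-paper argument to compare against; your proposal has to stand on its own. Its skeleton is sound and follows the natural strategy: sufficiency via Proposition~\ref{prp:n+1points}, the bound $d(n,2)=2n+1$ for conics, and Cayley--Bacharach for the complete intersection; necessity by passing to a minimal (hence essentially) $n$-dependent set, clearing away the cases of $n+1$ or more collinear nodes and of $|\Xset|\le 3n-1$ with Propositions~\ref{prp:2n+1} and~\ref{prp:3n-1}, and then producing a cubic through the remaining $3n$ nodes and promoting it to a complete intersection by a B\'ezout count. Those reductions are all correct as written.

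But the proof has genuine gaps exactly where you flag the ``main obstacle,'' and flagging a gap is not closing it. First and foremost: for $n\ge 4$ the existence of a cubic $\gamma$ containing all $3n$ nodes of a minimal essentially $n$-dependent set in general position (no $n+1$ collinear, no $2n+2$ on a conic) is the entire content of the necessity direction, and your proposal replaces it with ``remove a carefully chosen auxiliary line or conic \dots\ and reassemble a cubic.'' No candidate auxiliary curve is exhibited, no reason is given that the residual set falls under the inductive hypothesis in a usable form, and no mechanism is described for gluing whatever the induction returns into a single cubic through \emph{all} of $\Xset$; this is precisely the several-page core of \cite{HM12} and cannot be waved through. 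Second, the Cayley--Bacharach step in sufficiency is only proved for smooth $\gamma$; the degenerate cases (three concurrent lines, a conic plus a chord or tangent, a cuspidal or nodal cubic) are where the classical residuation argument actually requires care, and ``the same idea, pushed one step further'' is not an argument --- note also that for reducible $\gamma$ the intersection $\gamma\cap\sigma$ with $|\gamma\cap\sigma|=3n$ need not distribute over the components the way your peeling step assumes. Third, in the B\'ezout step the properness of each subspace $\{\sigma\in V: c\mid\sigma\}$ does not follow merely from counting nodes on the component $c$: you need a lower bound on the number of independent conditions that $\Xset\setminus c$ imposes on $\Pi_{n-\deg c}$, which in turn uses the minimality of $\Xset$ and is left unproved. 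Until at least the first of these is supplied, the proposal is an outline of the known proof rather than a proof.
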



\subsection{$GC_n$ sets and the Gasca-Maeztu conjecture \label{ss:GMconj}}

Let us consider a special type of $n$-poised sets whose
$n$-fundamental polynomials are products of $n$ linear factors as it
always takes place in the univariate case:
\begin{definition}[Chung, Yao \cite{CY77}]
An n-poised set ${\mathcal X}$ is called \emph{$GC_n$-set} if  the
$n$-fundamental polynomial of each node $A\in{\mathcal X}$ is a
product of $n$ linear factors.
\end{definition}
\noindent In other words, $GC_n$ sets are the sets each node of
which uses exactly $n$ lines.

Now we are in a position to present the Gasca-Maeztu conjecture,
called briefly also GM conjecture:

\begin{conjecture}[Gasca, Maeztu, \cite{GM82}]\label{conj:GM}
Any $GC_n$-set contains $n+1$ collinear nodes.
\end{conjecture}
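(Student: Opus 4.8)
The statement to be addressed is the Gasca--Maeztu conjecture itself, a well-known open problem established only for $n\le 5$; what follows therefore describes the strategy that succeeds in the known cases together with the point at which it resists a uniform treatment. The plan is to argue by induction on $n$. The cases $n\le 1$ are trivial, and $n=2$ can be checked by hand: a $GC_1$-set is a triangle whose three fundamental polynomials are its three sides, each a $2$-node (hence maximal) line, and for $n=2$ one uses $N=6$ together with the fact that each of the six fundamental conics splits into two lines to force a $3$-node line.

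For the inductive step let $\mathcal X$ be a $GC_n$-set with $N=\binom{n+2}{2}$ nodes, and suppose toward a contradiction that $\mathcal X$ has \emph{no} maximal line, so every line meets $\mathcal X$ in at most $n$ nodes. Fix a node $A$; by the $GC_n$ property $p^\star_{A,\mathcal X}=\ell_1\cdots\ell_n$ with each $\ell_i$ a line, and since two of these lines can share only the point $A\notin\mathcal X\setminus\{A\}$, they partition $\mathcal X\setminus\{A\}$, whence $\sum_i|\ell_i\cap\mathcal X|=N-1=\tfrac12 n(n+3)$ and some $\ell_i$ carries at least $\lceil(n+3)/2\rceil$ nodes. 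Running this over all $A$ produces a large supply of $n$-node lines, each used by many nodes. The central idea is then to peel such a line off: if $\ell$ has $k$ nodes of $\mathcal X$, one would like the residual set $\mathcal X\setminus\ell$ to decompose into $GC_{n-1}$-type pieces, apply the inductive hypothesis to obtain a maximal line $M$ of such a piece, and recombine $M$ with $\ell$ --- either $M$ together with a point of $\ell$ already carries $n+1$ nodes of $\mathcal X$, or the conic $\ell\cup M$ carries at least $2n+2$ nodes, contradicting Proposition~\ref{prp:2n+1} or Theorem~\ref{thm:3n}; in each alternative one is pushed back to a maximal line of $\mathcal X$.

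The difficulty --- and the reason the conjecture remains open for $n\ge 6$ --- is that deleting a used $n$-node line from a $GC_n$-set leaves an $(n-1)$-poised set whose fundamental polynomials need not factor into lines, so the naive induction does not close. The arguments for $n\le 5$ replace it by a case analysis on the number $\mu$ of maximal lines of $\mathcal X$, where $0\le\mu\le n+2$ (by \eqref{N=}), using the two facts recalled in the excerpt: any two maximal lines meet at a node, and no three are concurrent. When $\mu\ge 3$ the pairwise intersection points of the maximal lines form a rigid grid-like configuration on which the fundamental polynomials of the remaining nodes can be described explicitly, and the induction goes through on the complement of one maximal line. The stubborn regime is $\mu\in\{0,1,2\}$: here one must extract additional lines from the factorizations $p^\star_A=\ell_1\cdots\ell_n$ by intersection (B\'ezout-type) counting via Proposition~\ref{prp:n+1points}, show that such lines are shared by growing clusters of nodes, and argue that some cluster eventually forces $n+1$ collinear nodes. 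Carrying out this extraction by an argument uniform in $n$, rather than by the node-by-node exhaustion used up to $n=5$, is precisely the obstacle that the known proofs have not been able to circumvent.
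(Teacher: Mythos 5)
This statement is Conjecture~\ref{conj:GM} itself: the paper states it without proof, uses it only as a hypothesis in later results, and merely cites the literature for the verifications up to $n=5$. There is therefore no proof in the paper to compare yours against, and your proposal --- which explicitly declines to claim a proof and instead surveys the strategy and the obstruction --- is the honest response. As a proof it is of course incomplete by its own admission: the inductive step is never closed, precisely because (as you correctly identify) removing a used $n$-node line from a $GC_n$-set does not leave a $GC_{n-1}$-set, and indeed cannot, since $|\Xset\setminus{\ell}|=\binom{n+2}{2}-n=\binom{n+1}{2}+1$ exceeds $\dim\Pi_{n-1}$, so the residual set is not even $(n-1)$-poised. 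Only the removal of a \emph{maximal} line preserves poisedness and the $GC$ property, which is exactly why the conjecture is self-referential and hard.

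Two smaller inaccuracies in your sketch are worth flagging. First, the factors of $p^\star_{A,\Xset}=\ell_1\cdots\ell_n$ need not partition $\Xset\setminus\{A\}$: two of the lines may well cross at a node of $\Xset$ distinct from $A$ (only $A$ itself is excluded, since $p^\star_A(A)\neq0$). The covering still gives $\sum_i|\ell_i\cap\Xset|\ge N-1$, so the pigeonhole conclusion that some $\ell_i$ carries at least $\lceil(n+3)/2\rceil$ nodes survives, but as an inequality, not an equality. Second, having $2n+2$ nodes of an $n$-poised set on a conic contradicts the bound $d(n,2)=2n+1$ from Proposition~\ref{prp:ind1} and its corollary (a maximal conic carries $2n+1$ nodes), not Proposition~\ref{prp:2n+1}, which is about $n$-dependence of small sets. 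These do not change the overall assessment: the conjecture remains open for $n\ge6$, and no argument here or in the paper establishes it.
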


\noindent Thus the GM conjecture states that any $GC_n$ set
possesses a maximal line.

\noindent So far, this conjecture has been verified for the degrees
$n\leq 5.$ For $n=2$, the conjecture is evidently true. The case
$n=3$ is not hard to prove. The case $n=4$ was proved for the first
time by J.~R.~Busch in 1990~\cite{B90}. Since then, four other
proofs have appeared for this case: \cite{CG01, HJZ09b,BHT}, and
\cite{ST}. In our opinion the last one is the simplest and the
shortest one. The case $n=5$ was proved recently in \cite{HJZ14}.

Notice that if a line $M$ is maximal then the set $\Xset\setminus M$
is $(n-1)$-poised. Moreover, if $\Xset$ is a $GC_n$-set then
$\Xset\setminus M$ is a $GC_{n-1}$-set.

\noindent For a generalization of the Gasca-Maeztu conjecture to
maximal curves see \cite{HR}.

In the sequel we will make use of the following important result of
Carnicer and Gasca concerning the GM conjecture:

\begin{theorem}[Carnicer, Gasca, \cite{CG03}]\label{thm:CG}
If the Gasca-Maeztu conjecture is true for all $k\leq n$, then any $GC_n$-set possesses at least
three maximal lines.
\end{theorem}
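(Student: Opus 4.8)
The statement to prove is Carnicer–Gasca's theorem: assuming the GM conjecture holds for all $k\le n$, any $GC_n$-set has at least three maximal lines. I would proceed by induction on $n$, with the base case $n=1$ trivial (three non-collinear points, three maximal lines) and $n=2$ a direct check. For the inductive step, the GM conjecture itself gives at least one maximal line $M$. The key reduction is the fact already noted in the excerpt: $\Xset\setminus M$ is a $GC_{n-1}$-set, so by the inductive hypothesis it has at least three maximal lines $M_1,M_2,M_3$, each containing exactly $n$ nodes of $\Xset\setminus M$. The whole game is then to promote these $n$-node lines of $\Xset$ to maximal $(n+1)$-node lines of $\Xset$, or else to find other maximal lines directly.

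The crucial observation is that each $M_i$ meets $M$ in a point of the plane; the favorable case is when that intersection point is a node of $\Xset$ lying on $M$, for then $M_i$ passes through $n$ nodes off $M$ plus one node on $M$, hence $n+1$ nodes of $\Xset$, making it maximal. So the heart of the proof is a counting/incidence argument showing that we cannot have all three lines $M_1,M_2,M_3$ missing $M$ at a node. I would argue as follows: if $M_i\cap M$ is not a node, then $M_i$ is an $n$-node line of $\Xset$, and one studies how such a line interacts with the node on $M$. Here one uses that $M$ is maximal and therefore used by every node off $M$ (Corollary~\ref{cor:n+1points}), combined with the $(n-1)$-poisedness of $\Xset\setminus M$; a fundamental polynomial of a node $A\in\Xset\setminus M$ in $\Xset$ equals $M$ times its fundamental polynomial in $\Xset\setminus M$. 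Then the maximal lines of $\Xset\setminus M$ are used by many nodes, and one leverages the two standard properties of maximal lines (any two meet at a node, no three concurrent) inside $\Xset\setminus M$ together with the structure of $M$ to force at least two of the $M_i$ to hit $M$ at a node of $\Xset$.

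More concretely, I expect the argument to run through the following steps. First, fix a maximal line $M$ of $\Xset$ and the three maximal lines $M_1,M_2,M_3$ of $\Xset':=\Xset\setminus M$. Second, observe $M_i\cap M_j$ is a node of $\Xset'$ for $i\ne j$, and no node lies on all three. Third, count nodes of $\Xset$ on $M$: there are exactly $n+1$ of them, and each $M_i$ can meet $M$ in at most one point. Fourth — the main step — show that at least two of $M_1,M_2,M_3$ pass through a node of $\Xset$ on $M$; the natural route is to suppose two of them, say $M_1$ and $M_2$, both miss $M$ at non-nodes, and derive a contradiction by examining the fundamental polynomials of the $n$ nodes on $M_1$: each such polynomial is divisible by $M$, so by $M_1$ we would get too large a degree, or a curve of degree $n$ through all nodes of $\Xset'$ on $M_1$ together with the geometry of $M$ forces $M_1$ to actually be a maximal line of $\Xset$ after all. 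Finally, having produced two maximal lines of $\Xset$ among the $M_i$, note that $M$ is a third, distinct from both (since $M$ shares no $n+1$ nodes with any $M_i$), giving the desired three maximal lines.

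The main obstacle, as in the original Carnicer–Gasca argument, is precisely the fourth step: ruling out the configuration in which the maximal lines of the reduced $GC_{n-1}$-set fail to extend to maximal lines of the full set. This requires a careful analysis of which nodes use which lines, using that in a $GC_n$-set the $n$ lines used by a node are exactly the factors of its fundamental polynomial, and exploiting the interplay between maximality of $M$, the inductive maximal lines downstairs, and Proposition~\ref{prp:n+1points}. One should be prepared for a short case distinction according to how many of $M_1,M_2,M_3$ pass through nodes of $\Xset$ on $M$, and to invoke the GM conjecture again at lower levels if the reduced set must itself be peeled further. Everything else is bookkeeping with the incidence facts already recorded in the excerpt.
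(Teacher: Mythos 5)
The paper does not actually prove this statement: it is imported from Carnicer and Gasca \cite{CG03} and used as a black box, so there is no internal proof to compare yours against. Judged on its own terms, your proposal is a plan rather than a proof, and the decisive step is missing. Steps 1--3 are correct and routine: pass to $\Xset':=\Xset\setminus M$ for a maximal line $M$ supplied by the GM conjecture, invoke the induction hypothesis to get three maximal lines $M_1,M_2,M_3$ of $\Xset'$, and observe that such a line is maximal for $\Xset$ exactly when it meets $M$ at a node of $\Xset$. Everything then rests on your fourth step --- that at least two of $M_1,M_2,M_3$ meet $M$ at a node --- and you explicitly leave it open, offering only heuristics. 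Those heuristics do not work as stated: if $M_1\cap M\notin\Xset$, then $M_1$ is an $n$-node line of $\Xset$, and Lemma~\ref{lem:CG1} shows it is used by all $\binom{n}{2}$ nodes of $\Xset\setminus(M\cup M_1)$; this is a perfectly consistent configuration (it occurs in the principal lattice with $M=\{x+y=n\}$ and $M_1=\{x+y=n-1\}$), so no contradiction of the kind you describe (``too large a degree'', or $M_1$ being forced to be maximal after all) can be extracted from a single line missing $M$. What must be excluded is that \emph{two} of the lines miss $M$ simultaneously, and that requires a genuinely new argument, not bookkeeping. Moreover, since $\Xset'$ may have more than three maximal lines, it is not even clear that ``at least two of any chosen triple extend'' is the right intermediate claim; what the theorem needs is that at least two maximal lines of $\Xset'$, among all of them, meet $M$ at nodes, and your outline provides no mechanism for producing them. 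So there is a genuine gap precisely where you locate ``the main obstacle''.
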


\noindent In view of this one gets readily that each node of $\Xset$
uses at least one maximal line.

\subsection{Some examples of $n$-poised and $GC_n$ sets\label{ssec:methods}}

We will consider $3$ well-known constructions: \emph{The
Berzolari-Radon construction} \cite{B14,R48}, \emph{the Chung-Yao
construction} \cite{CY77} (called also \emph{Chung-Yao natural
lattice)}, and the \emph{principal lattice }. The first construction
gives examples of $n$-poised sets, while the remaining two give
examples of $GC_n$-sets. Let us mention that both the Chung-Yao
natural lattice and the principal lattice are special cases of the
Berzolari-Radon construction.

\noindent Note that Lagrange and Newton formulas for these
constructions can be found in \cite{GM82} and~\cite{J83}.

\subsubsection*{The Berzolari-Radon construction}

A set $\Xset$ containing $N=1+2+\cdots+(n+1)$ nodes is called
Berzolari-Radon set if there are $n+1$ lines:
$\ell_1,\ldots,\ell_{n+1}$ such that the sets $\ell_1,\
\ell_2\setminus \ell_1, \ \ell_3\setminus(\ell_1\cup\ell_2),\ldots,
\ell_{n+1} \setminus \bigl( \bigcup_{i=1}^{n} \ell_i \bigr)$ contain
exactly $(n+1), n, (n-1),\ldots, 1$ nodes, respectively. The
Berzolari-Radon set is $n$-poised.

\noindent It is worth noting that the Gasca-Maeztu conjecture is
equivalent to the statement that every $GC_n$-set is a
Berzolari-Radon set.

\subsubsection*{The Chung-Yao construction \label{ss:CY}}

Consider $n+2$ lines: $\ell_1,\ldots,\ell_{n+2},$ such that no two
lines are parallel, and no three lines intersect in one point. Then
the set $\Xset$ of intersection points of these lines is called
Chung-Yao set. Notice that $|\Xset|=\binom{n+2}{2}.$ Each fixed node
here is lying in exactly $2$ lines, and does not belong to the
remaining $n$ lines. Moreover, the product of these $n$ lines gives
the fundamental polynomial of the fixed node. Thus $\Xset$ is
$GC_n$-set.

\noindent Note that this construction can be characterized by the
fact that all the given $n+2$ lines are maximal. As it was mentioned
earlier, there are no $n$-poised sets with more maximal lines.

\subsubsection*{The principal lattice}

The principal lattice is the following set (or an affine image of it)
\begin{equation*}
  \Xset = \bigl\{ (i,j) \in{\mathbb Z}_+^2 : i+j\leq n \bigr\}.
\end{equation*}
Notice that the fundamental polynomial of the node $(i,j)$ here uses
$i$ vertical lines: $x=k,\ k=0,\ldots, i-1,$ $j$ horizontal lines:
$y=k,\ k=0,\ldots, j-1$ and $n-i-j$ lines with slope $-1:$ $x+y=k,\
k=i+j+1,\ldots, n.$ Thus $\Xset$ is $GC_n$-set.

\noindent Note that this lattice possesses just three maximal lines,
namely the lines $x=0$, $y=0$, and $x+y=n$. Note that, according to
Theorem \ref{thm:CG}, there are no $n$-poised sets with less maximal
lines, provided that the Gasca-Maeztu conjecture is true.

\subsection{Maximal curves and the sets $\Nset_q$ and $\Xset_\ell$}

Let us start with a generalization of
Proposition \ref{prp:n+1points} for algebraic curves
of higher degree. First set for $k\leq n$
\begin{equation*}
  d(n,k) := \dim\Pi_n - \dim\Pi_{n-k} = \tfrac{1}{2} \, k \,
  (2\,n\,{+}\,3\,{-}\,k).
\end{equation*}

\begin{proposition}[Rafayelyan, \cite{R11}, Prop.~3.1]\label{prp:ind1}
Let $q$ be an algebraic curve of degree $k\le n$ without multiple
components. Then the following hold.
\vspace{-2mm}
\begin{enumerate}
\setlength{\itemsep}{0mm}
\item
Any subset of $q$ consisting of more than $d(n,k)$ nodes is
$n$-dependent.
\item
A subset $\Xset \subset q$ consisting of $d(n,k)$ nodes is
$n$-independent if and only if the following implication holds for
any polynomial $p\in\Pi_n:$
\begin{equation}\label{eq:xgam1}
   p\big\vert_\Xset = 0 \quad\implies\quad p = q r
  \quad\text{for some } r\in\Pi_{n-k}.
\end{equation}
\end{enumerate}
\end{proposition}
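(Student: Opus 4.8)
The plan is to read the whole statement off a rank/nullity computation for the linear restriction map
\[
L_\Xset\colon \Pi_n\longrightarrow \mathbb{R}^{\,s},\qquad L_\Xset(p)=\bigl(p(A)\bigr)_{A\in\Xset},\qquad s:=|\Xset|.
\]
By the Lagrange‑formula characterization of $n$-independence recalled above, $\Xset$ is $n$-independent exactly when $L_\Xset$ is surjective; since $\dim\ker L_\Xset=\dim\Pi_n-\operatorname{rank}L_\Xset$ and $\operatorname{rank}L_\Xset\le s$, this says that $\Xset$ is $n$-independent if and only if $\dim\ker L_\Xset=\dim\Pi_n-s$, and $n$-dependent if and only if $\dim\ker L_\Xset>\dim\Pi_n-s$. (For $k=1$ this reduces to Proposition~\ref{prp:n+1points}, which I regard as the model to generalize.)

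The one algebraic ingredient is this: a curve $q$ of degree $k$ is a nonzero element of $\Pi_k$, so multiplication by $q$ is injective from $\Pi_{n-k}$ into $\Pi_n$ (polynomials form an integral domain). Hence the subspace $q\,\Pi_{n-k}:=\{qr:r\in\Pi_{n-k}\}$ of $\Pi_n$ has dimension $\dim\Pi_{n-k}=\dim\Pi_n-d(n,k)$. The hypothesis $\Xset\subset q$ means that $q$ vanishes at every node of $\Xset$, so $qr$ vanishes on $\Xset$ for every $r\in\Pi_{n-k}$; therefore $q\,\Pi_{n-k}\subseteq\ker L_\Xset$, and consequently $\dim\ker L_\Xset\ge\dim\Pi_n-d(n,k)$.

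Part (i) follows at once: if $s>d(n,k)$ then $\dim\Pi_n-s<\dim\Pi_n-d(n,k)\le\dim\ker L_\Xset$, so $L_\Xset$ is not surjective and $\Xset$ is $n$-dependent. For part (ii), assume $s=d(n,k)$, so that $\dim\Pi_n-s=\dim\Pi_n-d(n,k)=\dim q\,\Pi_{n-k}$. Then $\Xset$ is $n$-independent if and only if $\dim\ker L_\Xset=\dim q\,\Pi_{n-k}$; and because the inclusion $q\,\Pi_{n-k}\subseteq\ker L_\Xset$ is already known, this equality of dimensions is equivalent to the equality of subspaces $\ker L_\Xset=q\,\Pi_{n-k}$. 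Spelled out, the nontrivial inclusion $\ker L_\Xset\subseteq q\,\Pi_{n-k}$ is precisely the implication \eqref{eq:xgam1}, while the reverse inclusion is automatic; hence $\Xset$ is $n$-independent if and only if \eqref{eq:xgam1} holds.

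I do not expect a genuine obstacle: the proposition is, in essence, the identity $d(n,k)=\dim\Pi_n-\dim\Pi_{n-k}$ combined with the observation that $q\,\Pi_{n-k}$ occupies a full $\dim\Pi_{n-k}$ dimensions inside $\ker L_\Xset$. The points that need care are the definitional equivalence between $n$-independence of $\Xset$ and surjectivity of $L_\Xset$, the injectivity of multiplication by $q$ (for which $q\ne 0$ suffices — the square‑freeness hypothesis is not actually used in this statement), and, in part (ii), keeping straight that one of the two inclusions between $\ker L_\Xset$ and $q\,\Pi_{n-k}$ is free while only the other carries content. One should also record the elementary computation $\dim\Pi_n-\dim\Pi_{n-k}=\binom{n+2}{2}-\binom{n-k+2}{2}=\tfrac12\,k\,(2\,n+3-k)$ underlying the definition of $d(n,k)$.
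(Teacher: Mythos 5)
Your argument is correct and complete. The paper itself gives no proof of this proposition --- it is quoted from Rafayelyan's paper --- so there is nothing internal to compare against, but the rank--nullity argument you give is the standard and essentially the only natural one: the containment $q\,\Pi_{n-k}\subseteq \ker L_\Xset$ forces $\operatorname{rank}L_\Xset\le d(n,k)$, which yields (i), and in the borderline case $|\Xset|=d(n,k)$ surjectivity of $L_\Xset$ is equivalent to the two subspaces coinciding, which is exactly the stated implication. Your side remark is also accurate: only $q\neq 0$ (so that multiplication by $q$ is injective on $\Pi_{n-k}$) is needed here, and the ``no multiple components'' hypothesis plays no role in this particular statement, though it matters elsewhere in the paper (e.g.\ for maximal curves and for Proposition~\ref{prp:Nq}).
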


\noindent Let us mention that a special case of (i), when $q$
factors into linear factors, is due to Carnicer and Gasca,
\cite{CG00}.

As in the case of lines (see Corollary \ref{cor:n+1points})
we get readily from here

\begin{corollary} The following hold for any $n$-poised node set $\Xset:$
\begin{enumerate}
\item
At most $d(n,k)$ nodes of $\Xset$ can lie in a curve of degree $k;$
\item
A curve of degree $k\le n$ without multiple components containing
$d(n,k)$ nodes of $\Xset$ is used by all the nodes in ${\mathcal
X}\setminus q$.
\end{enumerate}
\end{corollary}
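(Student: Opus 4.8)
The plan is to derive both parts at once from Rafayelyan's Proposition~\ref{prp:ind1}, exactly as Corollary~\ref{cor:n+1points} was derived from Proposition~\ref{prp:n+1points}. The single auxiliary fact I will use is that \emph{every subset of an $n$-poised set is $n$-independent}: if $\Xset$ is $n$-poised and $\Yset\subseteq\Xset$, then for any $A\in\Yset$ the polynomial $p_{A,\Xset}^\star$ vanishes at every node of $\Xset$ other than $A$, hence at every node of $\Yset$ other than $A$, so it is (a constant multiple of) an $n$-fundamental polynomial of $A$ in $\Yset$.

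For part (i) I would first reduce to a curve without multiple components. Given a curve $q$ of degree $k\le n$, let $\tilde q$ be obtained from $q$ by deleting repeated factors; then $\tilde q$ has no multiple components, $k':=\deg\tilde q\le k$, and a node lies on $q$ precisely when it lies on $\tilde q$. If strictly more than $d(n,k')$ nodes of $\Xset$ were to lie on $\tilde q$, that subset would be $n$-dependent by Proposition~\ref{prp:ind1}(i), contradicting the auxiliary fact. So at most $d(n,k')$ nodes of $\Xset$ lie on $q$, and since $k\mapsto d(n,k)=nk+\tfrac32 k-\tfrac12 k^2$ is increasing for $k\le n$ (its derivative $n+\tfrac32-k$ is positive there), we get $d(n,k')\le d(n,k)$, as claimed.

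For part (ii), let $q$ be a curve of degree $k\le n$ without multiple components containing $d(n,k)$ nodes of $\Xset$; pick a subset $\Yset$ of exactly $d(n,k)$ such nodes. By the auxiliary fact $\Yset$ is $n$-independent, so Proposition~\ref{prp:ind1}(ii) gives the implication \eqref{eq:xgam1}: any $p\in\Pi_n$ with $p|_\Yset=0$ satisfies $p=qr$ for some $r\in\Pi_{n-k}$. Now take an arbitrary node $A\in\Xset\setminus q$. Since $A\notin\Yset$, the fundamental polynomial $p_{A,\Xset}^\star$ vanishes on all of $\Yset$, hence $p_{A,\Xset}^\star=qr$ with $r\in\Pi_{n-k}$; that is, $A$ uses $q$, which is exactly the assertion.

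I do not expect a genuine obstacle here --- this is why the text says the statement follows ``readily''. The only places that call for a moment's care are the reduction to the reduced curve $\tilde q$ together with the monotonicity of $d(n,\cdot)$ in part (i), and the verification that the relevant subsets of $\Xset$ are $n$-independent so that Proposition~\ref{prp:ind1} is applicable; beyond that the argument is immediate.
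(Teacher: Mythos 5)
Your proof is correct and follows exactly the route the paper intends: the paper gives no explicit argument, saying only that the corollary follows ``readily'' from Proposition~\ref{prp:ind1} in the same way Corollary~\ref{cor:n+1points} follows from Proposition~\ref{prp:n+1points}, and your deduction (subsets of an $n$-poised set are $n$-independent, then apply parts (i) and (ii) of Proposition~\ref{prp:ind1}) is precisely that deduction. The reduction to the reduced curve $\tilde q$ and the monotonicity of $d(n,\cdot)$ in part (i) are sensible extra care rather than a departure from the paper's approach.
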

Next we bring a generalization of the concept of a maximal line (see \cite{R11}):

\begin{definition}\label{def:mc}
A curve of degree $k\le n$ without multiple components passing
through $d(n,k)$ nodes of an $n$-poised set $\Xset$ is called a
\emph{maximal curve} for $\Xset$.
\end{definition}

\noindent Thus maximal line, conic, and cubic pass through $n+1$,
$2n+1$, and $3n$ nodes of $\Xset$, respectively.

Below, for an $n$-posed set $\Xset,$ line $\ell$ and an algebraic curve $q,$ we define
important sets $\Xset_\ell$ and $\Nset_q,$ which will be used frequently in the sequel.

\begin{definition} \label{def:Nq}
Let $\Xset$ be an $n$-poised set $\ell$ be a line and  $q$ be an algebraic curve
without multiple factors. Then
\begin{enumerate}
\item
$\Xset_\ell$ is the subset of nodes of
$\Xset$ which use the line $\ell;$
\item
$\Nset_q$ is the subset of nodes of
$\Xset$ which do not use the curve $q$ and are not lying in $q.$
\end{enumerate}
\end{definition}

Next let us bring a characterization of maximal curves:
\begin{proposition}[Rafayelyan, \cite{R11}, Prop.~3.3]\label{prp:mc}
Let $\Xset$ be an $n$-poised set and  $q$ be an algebraic curve of degree $k\le n$ without multiple factors.
Then the following statements are equivalent:
\begin{enumerate}
\item
The curve $q$ is maximal for $\Xset$;
\item
All the nodes in $\Yset:=\Xset\setminus q$ use the curve $q,$ i.e., $\Nset_q=\emptyset$;
\item
The set $\Yset$ is $(n-k)$-poised. Moreover, if $\Xset$ is a
$GC_n$-set then $\Yset$ is a $GC_{n-k}$-set.
\end{enumerate}
\end{proposition}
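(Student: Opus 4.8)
The plan is to establish the cycle of implications (i)$\Rightarrow$(iii)$\Rightarrow$(ii)$\Rightarrow$(i), since (iii) is the most substantive statement and is best derived directly from the counting hypothesis in (i). First, assume (i), so $q$ has degree $k$ and passes through exactly $d(n,k)=\dim\Pi_n-\dim\Pi_{n-k}$ nodes of $\Xset$. Then $|\Yset|=|\Xset|-d(n,k)=\dim\Pi_{n-k}=\binom{n-k+2}{2}$, which is the right cardinality for an $(n-k)$-poised set. To see that $\Yset$ is actually $(n-k)$-poised, invoke Proposition \ref{prp:poised}: take any $r\in\Pi_{n-k}$ vanishing on $\Yset$. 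Then $p:=qr\in\Pi_n$ vanishes on all of $\Xset$ (on $\Yset$ because $r$ does, on $\Xset\cap q$ because $q$ does), so $p\equiv 0$ by $n$-poisedness of $\Xset$; since $q\not\equiv 0$ and $\Pi_n$ is an integral domain, $r\equiv 0$. Hence $\Yset$ is $(n-k)$-poised. For the refinement, suppose in addition $\Xset$ is a $GC_n$-set, and fix $A\in\Yset$. Its $n$-fundamental polynomial $p_{A,\Xset}^\star$ splits into $n$ linear factors and vanishes at the $d(n,k)$ nodes of $\Xset$ on $q$; here I would want to argue that $q$ divides $p_{A,\Xset}^\star$ — i.e. that every node of $\Yset$ uses $q$ — which is precisely statement (ii), so the $GC$ refinement will follow once (ii) is in hand. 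Thus I postpone it and first close the loop to (ii).

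Next, assume (iii): $\Yset=\Xset\setminus q$ is $(n-k)$-poised. I claim $\Nset_q=\emptyset$, i.e. every $A\in\Yset$ uses $q$. Let $r_A\in\Pi_{n-k}$ be the $(n-k)$-fundamental polynomial of $A$ with respect to $\Yset$. Then $qr_A\in\Pi_n$ vanishes at every node of $\Xset$ except $A$ (it kills $\Xset\cap q$ via $q$, and kills $\Yset\setminus\{A\}$ via $r_A$), and $qr_A$ is nonzero at $A$ because both factors are. Hence $qr_A$ is a nonzero constant multiple of $p_{A,\Xset}^\star$, so $q\mid p_{A,\Xset}^\star$ and $A$ uses $q$. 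As this holds for all $A\in\Yset$, we get $\Nset_q=\emptyset$, which is (ii). (This also retroactively delivers the $GC_{n-k}$ refinement in (iii): dividing $p_{A,\Xset}^\star=q\,r_A$ shows $r_A$ is a product of $n-k$ linear factors, since $q$ has $k$ linear components and $\Pi$ is a UFD, and $r_A$ is the $\Yset$-fundamental polynomial of $A$; so $\Yset$ is $GC_{n-k}$.)

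Finally, assume (ii): every node of $\Yset=\Xset\setminus q$ uses $q$, so $p_{A,\Xset}^\star=q\,r_A$ with $r_A\in\Pi_{n-k}$ for each $A\in\Yset$. The polynomials $\{r_A:A\in\Yset\}$ are linearly independent (a dependence among them, multiplied by $q$, would give one among the $p_{A,\Xset}^\star$), and each $r_A$ vanishes on $\Yset\setminus\{A\}$ while being nonzero at $A$; hence $\Yset$ is $(n-k)$-independent, giving $|\Yset|\le\dim\Pi_{n-k}$. On the other hand, Proposition \ref{prp:ind1}(i) (the $n$-dependence half) bounds the number of nodes of $\Xset$ lying on the curve $q$ by $d(n,k)$ — here I use that $\Xset$ is $n$-independent, being $n$-poised — so $|\Xset\cap q|\le d(n,k)$. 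Adding, $\dim\Pi_n=|\Xset|=|\Yset|+|\Xset\cap q|\le\dim\Pi_{n-k}+d(n,k)=\dim\Pi_n$, forcing equality in both inequalities; in particular $|\Xset\cap q|=d(n,k)$, so $q$ is maximal for $\Xset$, which is (i).

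The main obstacle is bookkeeping the cardinalities and the two-sided bound in the last step: one must be careful that the nodes of $\Xset$ on $q$ are not overcounted with those of $\Yset$ (they are disjoint by the definition of $\Yset$), and that Proposition \ref{prp:ind1} is applicable — it requires $q$ to have no multiple components, which is part of the hypothesis, and $\deg q=k\le n$. A minor subtlety is that $r_A$ need not be uniquely determined a priori as the $\Yset$-fundamental polynomial until we know $\Yset$ is $(n-k)$-poised; but for the implications (ii)$\Rightarrow$(i) and (iii)$\Rightarrow$(ii) only $(n-k)$-independence/poisedness of $\Yset$ is needed at the relevant stage, so the argument is not circular.
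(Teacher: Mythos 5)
Your proof is correct. Note that the paper offers no proof of Proposition \ref{prp:mc} at all --- it is quoted from Rafayelyan \cite{R11} --- so there is nothing in-paper to compare against; your cycle (i)$\Rightarrow$(iii)$\Rightarrow$(ii)$\Rightarrow$(i), built on the multiply-by-$q$ and divide-by-$q$ manipulations together with the counting identity $\dim\Pi_n=\dim\Pi_{n-k}+d(n,k)$ and Proposition \ref{prp:ind1}(i), is the standard route and every step checks out. One cosmetic remark: in the $GC$ refinement, the clause ``since $q$ has $k$ linear components'' is a conclusion rather than a hypothesis --- a priori $q$ is only a squarefree curve of degree $k$, and it is precisely the relation $q\mid p^\star_{A,\Xset}$ together with unique factorization that forces $q$ to split into $k$ linear factors --- but the UFD argument you invoke delivers exactly this, so there is no gap.
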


\noindent Thus $\Nset_q=\emptyset$ means that $q$ is a maximal
curve. The following result concerns the case when
$\Nset_q\neq\emptyset.$
\begin{proposition}[Rafayelyan, \cite{R11}]\label{prp:Nq}
Let $\Xset$ be an $n$-poised set and  $q$ be an algebraic curve of degree $k\le n$ without multiple factors.
Then the set $\Nset_q$ is essentially $(n-k)$-dependent, provided that it is not empty.
\end{proposition}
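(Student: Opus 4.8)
The plan is to reduce the statement to the essential $(n-k)$-dependence criterion, namely that no node of $\Nset_q$ possesses an $(n-k)$-fundamental polynomial with respect to $\Nset_q$. Suppose, to the contrary, that $\Nset_q\neq\emptyset$ and that some node $A\in\Nset_q$ has a fundamental polynomial $r\in\Pi_{n-k}$ relative to $\Nset_q$, i.e.\ $r(A)\neq 0$ and $r$ vanishes on $\Nset_q\setminus\{A\}$. The natural candidate for a contradiction is to multiply $r$ by $q$: the product $qr\in\Pi_n$ vanishes on $(\Nset_q\setminus\{A\})\cup(\Xset\cap q)$ and does not vanish at $A$ (since $A\notin q$ and $r(A)\neq 0$). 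The difficulty is that this does not immediately give an $n$-fundamental polynomial for $A$ in all of $\Xset$, because $qr$ need not vanish at the nodes of $\Xset\setminus q$ that use $q$ — call this set $\Uset:=(\Xset\setminus q)\setminus\Nset_q$, the nodes lying off $q$ that do use $q$.

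To handle $\Uset$, I would correct $qr$ by subtracting a suitable combination of the fundamental polynomials (in $\Xset$) of the nodes in $\Uset$. Precisely, set
\[
p := q\,r \;-\; \sum_{B\in\Uset} (qr)(B)\, p_{B,\Xset}^\star .
\]
Each $B\in\Uset$ uses $q$, so $p_{B,\Xset}^\star=q\,s_B$ for some $s_B\in\Pi_{n-k}$; hence $p$ is still divisible by $q$, say $p=q\tilde r$ with $\tilde r\in\Pi_{n-k}$. By construction $p$ vanishes on all of $\Xset$ except possibly at $A$: it vanishes on $\Xset\cap q$ because $qr$ and every $p_{B,\Xset}^\star$ do; it vanishes on $\Nset_q\setminus\{A\}$ because $r$ does there and each $p_{B,\Xset}^\star$ does (as $B\ne$ those nodes); it vanishes on $\Uset$ by the choice of coefficients; and at $A$ we have $p(A)=q(A)r(A)-0=q(A)r(A)\neq 0$ since $p_{B,\Xset}^\star(A)=0$ for $B\in\Uset\subset\Xset\setminus\{A\}$. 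Therefore $p$ is (a constant multiple of) the $n$-fundamental polynomial $p_{A,\Xset}^\star$, and moreover $q\mid p$, which says exactly that $A$ uses $q$. This contradicts $A\in\Nset_q$.

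The main obstacle, and the step I would want to be careful about, is verifying that the correction term does not destroy the vanishing at $A$ and does not spoil divisibility by $q$ — both are handled above, but they rely on the two clean facts that nodes of $\Uset$ are distinct from $A$ (so their fundamental polynomials vanish at $A$) and that using $q$ means $q$ divides the whole fundamental polynomial (Definition \ref{def:Nq} and the divisibility definition of ``uses''). One should also note that $q$ has no multiple factors is used only to make sense of maximal-curve terminology and is not actually needed for this argument; what is needed is merely $\deg q=k\le n$ so that $qr\in\Pi_n$. Finally, if one prefers to avoid the explicit Lagrange-type correction, the same conclusion follows from Remark \ref{rem:essdep} applied in reverse together with Proposition \ref{prp:mc}: since $A$ would then use $q$, the set $\Xset\setminus q$ would have fewer non-users, and an induction on $|\Nset_q|$ closes the argument; but the direct computation above seems cleanest.
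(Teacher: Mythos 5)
Your argument is correct. The paper itself does not prove this proposition but only cites it from Rafayelyan's work, and your Lagrange-correction construction is exactly the standard argument behind it: writing $\Xset$ as the disjoint union of $\Xset\cap q$, $\Nset_q$, and the set $\mathcal{U}$ of off-curve nodes using $q$, the polynomial $p=qr-\sum_{B\in\mathcal{U}}(qr)(B)\,p_{B,\Xset}^\star$ is indeed in $\Pi_n$, vanishes on $\Xset\setminus\{A\}$, is nonzero at $A$, and remains divisible by $q$ because each $p_{B,\Xset}^\star$ with $B\in\mathcal{U}$ is; this forces $A$ to use $q$, contradicting $A\in\Nset_q$. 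Your side observations are also accurate: the absence of multiple factors plays no role in this particular argument, and the only facts used are the definition of ``uses'' as divisibility of the fundamental polynomial and the $n$-poisedness of $\Xset$ (which guarantees the existence and the Kronecker-delta property of the $p_{B,\Xset}^\star$).
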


It is worth mentioning that the special case $k=1$ of above two results, where $q$ is a line
is due to Carnicer and Gasca \cite{CG01}. Note also that the case when $q$ is a product of $k$ lines
is proved in \cite{HJZ09b}.

\begin{proposition}[\cite{BHT15}]\label{prp:2nl}
Let $\Xset$ be an $n$-poised set. Then there is at most one
algebraic curve of degree $n-1$ passing through $N-4$ nodes of $\Xset.$
\end{proposition}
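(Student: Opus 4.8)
The plan is to argue by contradiction. Suppose $q_1\neq q_2$ are two curves of degree $n-1$, each passing through exactly $N-4$ nodes of $\Xset$; put $S_i:=\Xset\cap q_i$ (so $|S_i|=N-4$) and $Z_i:=\Xset\setminus q_i$ (so $|Z_i|=4$). I would first record some elementary facts. A curve of degree $k\le n-2$ without multiple components can contain at most $d(n,n-2)=N-6$ nodes of $\Xset$, so neither $q_i$ has a multiple component — its reduction would otherwise already contain $N-4$ nodes. Since $S_i$ is $n$-independent, $U_i:=\{p\in\Pi_n:p|_{S_i}=0\}$ is $4$-dimensional with basis $\{p^\star_{z,\Xset}:z\in Z_i\}$, and it contains the $3$-dimensional subspace $q_i\Pi_1:=\{q_i\ell:\ell\in\Pi_1\}$. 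Expanding the forced relation $q_i=\sum_{z\in Z_i}\alpha_z\,p^\star_{z,\Xset}$ and evaluating at the nodes of $Z_i$ gives $\alpha_z=q_i(z)\neq 0$ for every $z$; since four linearly independent polynomials cannot all lie in the $3$-dimensional space $q_i\Pi_1$, some node of $Z_i$ does not use $q_i$, so $\Nset_{q_i}\neq\emptyset$.

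Next I would pin down the possible shapes of $Z_i$. By Proposition~\ref{prp:Nq} the nonempty set $\Nset_{q_i}$ is essentially $1$-dependent. A short check shows that any one- or two-point set, any three non-collinear points, and any four points exactly three of which are collinear all contain a node carrying a linear fundamental polynomial; hence $\Nset_{q_i}$, being an essentially $1$-dependent subset of the four-element set $Z_i$, must consist of three collinear nodes, of four collinear nodes, or of four nodes no three of which are collinear. The second alternative is impossible, for if $\Nset_{q_i}$ lay on a line $\ell$, then $q_i\ell\in\Pi_n$ would vanish on $S_i\cup\Nset_{q_i}=\Xset$, forcing $q_i\ell=0$. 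Replacing $q_i$ by $xq_i$ and by $yq_i$ in the relation above and comparing coefficients yields the further identities $\sum_{z\in Z_i}\alpha_z(x-x_z)\,p^\star_{z,\Xset}=\sum_{z\in Z_i}\alpha_z(y-y_z)\,p^\star_{z,\Xset}=0$ (writing $z=(x_z,y_z)$); I expect these to be the device that determines $(\alpha_z)_{z\in Z_i}$ up to a scalar — and therefore $q_i$ — once $Z_i$ is known.

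I would then dispose of the case $Z_1=Z_2=:Z$. Here $q_1\Pi_1$ and $q_2\Pi_1$ are $3$-dimensional subspaces of the $4$-dimensional space $\{p\in\Pi_n:p|_{\Xset\setminus Z}=0\}$, so they intersect in dimension at least $2$. But a nonzero polynomial of degree at most $n$ divisible by both $q_1$ and $q_2$ has degree at least $2(n-1)-\deg\gcd(q_1,q_2)$, which forces $\deg\gcd(q_1,q_2)\ge n-2$; as $q_1\neq q_2$ this gives $q_i=g\ell_i$ with $\deg g=n-2$ and $\ell_1\neq\ell_2$ lines, and then every element of $q_1\Pi_1\cap q_2\Pi_1$ is a multiple of $g\ell_1\ell_2$, so that intersection is at most $1$-dimensional — a contradiction. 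Hence $Z_1\neq Z_2$, whence $|Z_1\cup Z_2|\ge 5$ and the set $W:=S_1\cap S_2=\Xset\setminus(Z_1\cap Z_2)$ has at least $N-8$ nodes, all lying on both $q_1$ and $q_2$.

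The final step — ruling out $Z_1\neq Z_2$ — is the one I expect to be the main obstacle. Bézout's theorem is by itself too weak: when $q_1,q_2$ have no common component it only gives $|W|\le(n-1)^2$, which is compatible with $|W|\ge N-8$. The argument must genuinely use $n$-poisedness, combining the identities of the second paragraph with the bound on collinear nodes (Corollary~\ref{cor:n+1points}) and the essential $1$-dependence of $\Nset_{q_1}$ and $\Nset_{q_2}$ to show $W$ is too large. In the remaining case, where $q_1$ and $q_2$ share a common component $g$, one writes $q_i=g\,r_i$ with $\gcd(r_1,r_2)=1$, observes that $W$ lies on $g$ together with the at most $(n-1-\deg g)^2$ points of $r_1\cap r_2$, and repeats the analysis with $g$ and the structure of $\Xset$ off $g$ in place of $q_i$ and $\Xset$; the subcase in which $\deg g$ is close to $n-1$ is the technical heart of the whole proof.
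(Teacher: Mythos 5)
The paper itself offers no proof of Proposition~\ref{prp:2nl}; it is imported from \cite{BHT15}, so your attempt can only be judged on its own terms. Judged so, it splits into one essentially complete and correct piece (the case $Z_1=Z_2$) and one admitted hole (the case $Z_1\neq Z_2$), which you yourself label ``the main obstacle'' and only sketch. That hole cannot be closed in the form you set it up, because the global statement you are trying to prove is false: for $n=2$ a $2$-poised set of six points in general position has many distinct lines through exactly $2=N-4$ nodes, and in a Chung--Yao lattice every product of $n-1$ of the $n+2$ maximal lines is a curve of degree $n-1$ through $N-3>N-4$ nodes, so an ``at least $N-4$'' reading fails as well. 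The proposition must be read the way it is actually used to derive \eqref{2nodel}: a \emph{fixed} set $T$ of $N-4$ nodes of $\Xset$ lies on at most one curve of degree $n-1$ (there the two candidate curves each contain $N-3$ nodes but are only known to share $N-4$ of them). With that reading, partitioning according to $Z_i:=\Xset\setminus q_i$ is the wrong organizing principle, and the configurations you banished to ``$Z_1\neq Z_2$'' (one or both $q_i$ maximal, containing $N-3$ nodes) still have to be covered.

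The good news is that your $Z_1=Z_2$ argument proves the correctly-read statement verbatim once you run it in the right space. Suppose $q_1\neq q_2$ (non-proportional, each of degree $n-1$) both vanish on $T$ with $|T|=N-4$, and set $U_T:=\{p\in\Pi_n: p\vert_T=0\}$. Since $T$ is a subset of the $n$-poised set $\Xset$, it is $n$-independent, so $\dim U_T=N-|T|=4$; and $q_1\Pi_1$, $q_2\Pi_1$ are $3$-dimensional subspaces of $U_T$ because $\ell\mapsto q_i\ell$ is injective. Hence $\dim\bigl(q_1\Pi_1\cap q_2\Pi_1\bigr)\ge 2$, whereas your gcd computation shows that any nonzero common element forces $q_i=g\,\ell_i$ with $\ell_1,\ell_2$ non-proportional lines and that every common element is then a scalar multiple of $g\,\ell_1\ell_2$, so the intersection has dimension at most $1$ --- a contradiction, with no case distinction at all. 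Everything else in your write-up (the exclusion of multiple components, the analysis of $\Nset_{q_i}$ via essential $1$-dependence, the $xq_i,yq_i$ identities) is correct but unnecessary for this route; the last of these is in any event only ``expected'' to work, not proved. The concrete gap, then, is not a missing lemma but a misidentification of what has to be shown, which manufactured an unprovable final case while the two-line dimension-versus-gcd clash you already found settles the whole proposition.
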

From here one gets readily for any $n$-poised set $\Xset$ (see \cite{B15}):

\begin{equation}\label{2nodel} |\Xset_\ell|\le 1\ \ \hbox{if}\ \ {\ell}\ \hbox{is a $2$-node line.}
\end{equation}
For a generalization of these results for curves of arbitrary degree and $3$-node lines see \cite{HT1, HT2}.
Let us mention that the statement \eqref{2nodel} for $GC_n$ sets has already been shown in \cite{CG03}.

In the sequel we will use frequently the following $2$ lemmas from
\cite{CG03}.  Let us mention that the the second lemma is used in a
proof there and is not explicitly formulated. For the sake of
completeness we bring proofs here.

\begin{lemma}[Carnicer, Gasca, \cite{CG03}]\label{lem:CG1}
Let $\Xset$ be an $n$-poised set and ${\ell}$ be a line. Suppose also
that there is a maximal line $M_0$ such that $M_0 \cap {\ell} \notin
\Xset.$ Then we have that
\begin{equation*}
\Xset_{\ell} = {(\Xset \setminus M_0)}_{\ell}.
\end{equation*}
If in addition ${\ell}$ is an $n$-node line then we have that
\begin{equation*}
\Xset_{\ell} = \Xset \setminus ({\ell} \cup M_0)\ \hbox{and therefore}\
|\Xset_{\ell}| = \binom{n}{2}.
\end{equation*}
\end{lemma}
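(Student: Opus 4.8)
The statement splits into two parts. First I want to show $\Xset_\ell = (\Xset\setminus M_0)_\ell$, i.e. that the nodes of $\Xset$ using $\ell$ are exactly the nodes of the $(n-1)$-poised set $\Xset\setminus M_0$ that use $\ell$. Here I would use Corollary~\ref{cor:n+1points}(ii): since $M_0$ is a maximal line, every node $A\in\Xset\setminus M_0$ uses $M_0$, so its fundamental polynomial factors as $p_{A,\Xset}^\star = M_0\, q$ with $q\in\Pi_{n-1}$, and $q$ vanishes precisely at the nodes of $\Xset\setminus M_0$ other than $A$. Thus $q = c\, p_{A,\Xset\setminus M_0}^\star$, so $A$ uses $\ell$ in $\Xset$ if and only if $\ell$ divides $M_0\, p_{A,\Xset\setminus M_0}^\star$. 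Since $M_0\cap\ell\notin\Xset$, the line $\ell$ is not equal to $M_0$; hence $\ell \mid M_0\, p_{A,\Xset\setminus M_0}^\star$ iff $\ell \mid p_{A,\Xset\setminus M_0}^\star$, i.e. iff $A$ uses $\ell$ in $\Xset\setminus M_0$. This gives the first equality. One should also note that a node $A\in M_0$ cannot use $\ell$ while giving the right count: actually we must check that no node lying on $M_0$ uses $\ell$ — but this follows because $\ell\cap M_0\notin\Xset$ and if $A\in M_0\setminus\ell$ used $\ell$, then $p_{A,\Xset}^\star$ would vanish on all $n+1$ nodes of $M_0$ except $A$ and also on $\ell$, giving $n + (n+1)\cdot$-many forced zeros that is compatible; the cleaner route is simply to observe $\Xset_\ell = (\Xset\setminus M_0)_\ell$ already excludes nodes of $M_0$ by the argument above once one checks a node of $M_0$ never uses a line meeting $M_0$ off $\Xset$ — I would handle this by the dimension/degree argument that $p_{A,\Xset}^\star$ restricted to $\ell$ vanishes at the $n$ nodes of $\ell$, forcing $\ell\mid p^\star_{A,\Xset}$ only when the remaining factor accounts for the rest, which is consistent; so in fact a node of $M_0$ can use $\ell$, but then it lies in $(\Xset\setminus M_0)_\ell$ trivially false — I will instead phrase the first identity directly via the bijection $A\mapsto A$ between $\Xset\setminus M_0$ and itself and check containment both ways, noting $\Xset_\ell\subseteq\Xset\setminus M_0$ holds because a node $A\in M_0$ using $\ell\neq M_0$ would force $p^\star_{A,\Xset}=M_0\,\ell\,r$ impossible by degree unless $A\in M_0\cap\ell$, contradiction.

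For the second part, assume $\ell$ is an $n$-node line. Let $A\in\Xset\setminus(\ell\cup M_0)$. I claim $A$ uses $\ell$. Consider the product $\ell\cdot p^\star_{A,\Yset}$ where $\Yset=\Xset\setminus M_0$: more directly, take any $A\in\Xset\setminus(\ell\cup M_0)$ and look at its fundamental polynomial $p^\star_{A,\Xset}$ in $\Pi_n$. It vanishes at all $n$ nodes of $\ell$ (these are all distinct from $A$), but $\ell$ contains only $n$ nodes, one short of the $n+1$ needed to invoke Proposition~\ref{prp:n+1points} directly. This is the crux. Here I would use the maximal line $M_0$: the point $P:=\ell\cap M_0$ is not a node, but $p^\star_{A,\Xset}$ vanishes at the $n$ nodes of $\ell$, and I want one more vanishing point on $\ell$. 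Restricting to $M_0$: $p^\star_{A,\Xset}$ vanishes at all $n+1$ nodes of $M_0$ except possibly at the nodes of $M_0$ equal to $A$ — but $A\notin M_0$, so $p^\star_{A,\Xset}$ vanishes at all $n+1$ nodes of $M_0$, hence $M_0\mid p^\star_{A,\Xset}$, say $p^\star_{A,\Xset}=M_0\, q$, $q\in\Pi_{n-1}$. Now $q$ vanishes at the $n$ nodes of $\ell$ (none of which lies on $M_0$, since $\ell\cap M_0\notin\Xset$, so $M_0$ does not vanish there, forcing $q$ to). A polynomial of degree $n-1$ vanishing at $n$ collinear points on $\ell$ is divisible by $\ell$ by Proposition~\ref{prp:n+1points} applied with $n-1$ in place of $n$. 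Hence $\ell\mid q\mid p^\star_{A,\Xset}$, so $A$ uses $\ell$: $\Xset\setminus(\ell\cup M_0)\subseteq\Xset_\ell$. The reverse inclusion $\Xset_\ell\subseteq\Xset\setminus(\ell\cup M_0)$ is immediate: a node using $\ell$ cannot lie on $\ell$ (a fundamental polynomial cannot vanish at its own node), and by the first part $\Xset_\ell=(\Xset\setminus M_0)_\ell$ contains no node of $M_0$. Therefore $\Xset_\ell=\Xset\setminus(\ell\cup M_0)$.

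Finally the count: $|\Xset|=N=\binom{n+2}{2}$ by \eqref{N=}. The set $\ell\cup M_0$ contains the $n$ nodes of $\ell$ plus the $n+1$ nodes of $M_0$, and these are disjoint because $\ell\cap M_0=P\notin\Xset$; so $|\ell\cup M_0\ \text{(within }\Xset)| = n+(n+1)=2n+1$. Hence $|\Xset_\ell|=\binom{n+2}{2}-(2n+1)=\tfrac{(n+2)(n+1)}{2}-(2n+1)=\tfrac{n^2+3n+2-4n-2}{2}=\tfrac{n^2-n}{2}=\binom{n}{2}$, as claimed. The main obstacle, as indicated, is getting the ``$n$-th vanishing point'' issue resolved: a degree-$n$ polynomial vanishing at only $n$ points of a line need not be divisible by that line, and the whole point of the hypothesis $M_0\cap\ell\notin\Xset$ is to first strip off the factor $M_0$ and thereby drop the degree to $n-1$, at which level $n$ collinear zeros do force divisibility by $\ell$. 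I would make sure to state explicitly that Proposition~\ref{prp:n+1points} is being used at two different degree levels.
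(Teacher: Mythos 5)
Your second half — the $n$-node-line case — is exactly the paper's argument: since $A\notin M_0$ and $M_0$ is maximal, $p^\star_{A,\Xset}=M_0q$ with $q\in\Pi_{n-1}$; since $M_0\cap\ell\notin\Xset$, the factor $M_0$ vanishes at none of the $n$ nodes of $\ell$, so $q$ inherits those $n$ zeros, and Proposition~\ref{prp:n+1points} applied at degree $n-1$ gives $\ell\mid q$. You correctly identify this degree drop as the crux, and the count $\binom{n+2}{2}-(2n+1)=\binom{n}{2}$ is right. Your treatment of nodes $A\notin M_0$ in the first identity, via $p^\star_{A,\Xset}=c\,M_0\,p^\star_{A,\Xset\setminus M_0}$ and the observation that $\ell\mid M_0q$ iff $\ell\mid q$, is also fine (and makes explicit something the paper leaves implicit).

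The genuine gap is the exclusion of nodes lying on $M_0$, where your text visibly flip-flops and finally settles on: a node $A\in M_0$ using $\ell$ ``would force $p^\star_{A,\Xset}=M_0\ell r$, impossible by degree unless $A\in M_0\cap\ell$.'' Two things are wrong there. First, the factorization is asserted rather than derived: the correct derivation is that $p^\star_{A,\Xset}=\ell q$ with $q\in\Pi_{n-1}$, and since $M_0\cap\ell\notin\Xset$ the factor $\ell$ vanishes at no node of $M_0$, so $q$ must vanish at all $n$ nodes of $M_0\setminus\{A\}$; Proposition~\ref{prp:n+1points}, again at degree $n-1$, then yields $M_0\mid q$. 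Second, there is no degree obstruction at all: $M_0\ell r$ with $r\in\Pi_{n-2}$ lies comfortably in $\Pi_n$, so ``impossible by degree'' proves nothing. The actual contradiction — and this is how the paper argues — is that $p^\star_{A,\Xset}=M_0\ell r$ vanishes identically on $M_0$, hence at $A$ itself, contradicting $p^\star_{A,\Xset}(A)=1$. Without this step the identity $\Xset_\ell=(\Xset\setminus M_0)_\ell$, and with it the claim $\Xset_\ell=\Xset\setminus(\ell\cup M_0)$, is not established, since a priori $\Xset_\ell$ could contain nodes of $M_0$.
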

\begin{proof}Suppose conversely that a node $A\in M_0$ uses ${\ell}:$
$$ p^\star_A= {\ell}q,\quad q\in\Pi_{n-1}.$$
Notice that $q$ vanishes at the $n$ nodes in $M_0\setminus \{A\}$ Thus, in view of Proposition \ref{prp:n+1points}, we have that $q$ and hence $p^\star_A$ vanishes on $M_0.$
In particular $p$ vanishes at $A,$ which is a contradiction.

Now assume that ${\ell}$ is an $n$-node line and $A\notin {\ell}\cup M_0.$ Then we have that
$$ p^\star_A= M_0q,\quad q\in\Pi_{n-1}.$$
Notice that $q$ vanishes at the $n$ nodes of ${\ell}.$ Thus, in view of
Proposition \ref{prp:n+1points}, we have that $q={\ell}r,\
r\in\Pi_{n-2}.$ Thus we obtain that $p^\star_A=M_0{\ell}r,$ i.e., $A$
uses the line ${\ell}.$
\end{proof}

\begin{lemma}[Carnicer, Gasca, \cite{CG03}]\label{lem:CG2}
Let $\Xset$ be an $n$-poised set and ${\ell}$ be a line. Suppose also
that there are two maximal lines $M', M''$ such that $M' \cap M''
\cap {\ell} \in \Xset.$ Then we have that
\begin{equation*}
\Xset_{\ell} = {(\Xset \setminus (M' \cup M''))}_{\ell}.
\end{equation*}
If in addition ${\ell}$ is an $n$-node line then we have that
\begin{equation*}
\Xset_{\ell} = \Xset \setminus ({\ell} \cup M' \cup M'')\ \hbox{and
therefore}\ |\Xset_{\ell}| = \binom{n-1}{2}.
\end{equation*}
\begin{proof} Suppose that a node $A\in (M'\cup M'')\setminus {\ell}$ uses ${\ell}:$
$$ p^\star_A= {\ell}q,\quad q\in\Pi_{n-1}.$$
Suppose, for example, $A\in M'.$ Then notice that $q$ vanishes at
the $n$ nodes of $M''\setminus {\ell}.$ Thus, in view of Proposition
\ref{prp:n+1points}, we have that $q=M''r,\ r\in \Pi_{n-2}.$ Then
$r$ vanishes on $n-1$ nodes in $M'\setminus {\ell}$ different from $A.$
Thus $r$ and hence also $p^\star_A$ vanishes on whole line $M'$
including $A,$ which is a contradiction.

Now assume that ${\ell}$ is an $n$-node line and $A\notin {\ell}\cup M'\cup
M''.$ Then we have that
$$ p^\star_A= M'M''q,\quad q\in\Pi_{n-2}.$$
Notice that $q$ vanishes at the $n-1$ nodes of ${\ell}$ different
from the node of intersection with the maximal lines. Thus, in view
of Proposition \ref{prp:n+1points}, we have that $q={\ell}r,\quad r\in\Pi_{n-3}. $ Therefore we obtain that
$p^\star_A=M'M''{\ell}r,$ i.e., $A$ uses the line ${\ell}.$
\end{proof}

\end{lemma}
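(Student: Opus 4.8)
\noindent The plan is to establish the general identity $\Xset_\ell=(\Xset\setminus(M'\cup M''))_\ell$ first, and then to deduce the $n$-node statement by recognising $\ell$ as a maximal line of the reduced set. Two tools already available suffice: Proposition~\ref{prp:n+1points}, used repeatedly in the form ``a polynomial of degree $\le m$ with $m+1$ collinear zeros is divisible by that line'', and the consequence of Corollary~\ref{cor:n+1points} that if $M$ is a maximal line of an $n$-poised set $\Xset$ and $A\in\Xset\setminus M$, then $M$ divides $p_{A,\Xset}^\star$, so that $p_{A,\Xset}^\star=M\,p_{A,\Xset\setminus M}^\star$ up to a nonzero constant. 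I record at the outset that the hypothesis forces the node $O:=M'\cap M''$ to lie on $\ell$ and to be the \emph{only} node of $\Xset$ incident with two of the three lines $\ell,M',M''$: two maximal lines meet in a single node, whence $\ell\cap M'$ and $\ell\cap M''$ both equal $O$. I also assume $\ell\notin\{M',M''\}$, which is automatic once $\ell$ is an $n$-node line and is the only case needed.

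First I would verify $\Xset_\ell\subseteq\Xset\setminus(M'\cup M'')$. A node lying on $\ell$ cannot use $\ell$ (its fundamental polynomial would vanish at it), so it suffices to rule out a node $A\in(M'\cup M'')\setminus\ell$, say $A\in M'$, from using $\ell$; note $A\ne O$ and $A\notin M''$. If $p_A^\star=\ell q$ with $q\in\Pi_{n-1}$, then $q$ vanishes at the $n$ nodes of $M''$ different from $O$, all lying off $\ell$; hence $M''\mid q$, say $q=M''r$ with $r\in\Pi_{n-2}$. The $n$ nodes of $M'$ other than $O$ lie off both $\ell$ and $M''$, so $r$ vanishes at the $n-1$ of them different from $A$, forcing $M'\mid r$ and hence $M'\mid p_A^\star$ --- contradicting $p_A^\star(A)\ne 0$. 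Next, set $\Yset:=\Xset\setminus(M'\cup M'')$; it is $(n-2)$-poised, being obtained by deleting the maximal line $M'$ from $\Xset$ and then $M''$, which still carries $n$ nodes of the $(n-1)$-poised set $\Xset\setminus M'$ and is therefore maximal for it. Applying the factorisation fact twice gives $p_{A,\Xset}^\star=M'M''\,p_{A,\Yset}^\star$ up to a nonzero constant for each $A\in\Yset$; and since $\ell$ is coprime to $M'$ and to $M''$, this shows $\ell\mid p_{A,\Xset}^\star$ iff $\ell\mid p_{A,\Yset}^\star$, i.e.\ $A$ uses $\ell$ in $\Xset$ exactly when it uses $\ell$ in $\Yset$. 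Combining with the inclusion above yields $\Xset_\ell=\Yset_\ell=(\Xset\setminus(M'\cup M''))_\ell$.

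For the final assertion, suppose $\ell$ is an $n$-node line of $\Xset$. Passing to $\Yset$ removes from $\ell$ only the node $O$, so $\ell$ carries exactly $n-1$ nodes of the $(n-2)$-poised set $\Yset$ --- the maximal possible number --- and is therefore a maximal line of $\Yset$. By Corollary~\ref{cor:n+1points}(ii) every node of $\Yset\setminus\ell$ then uses $\ell$, so $\Yset_\ell=\Yset\setminus\ell$, and the general identity gives $\Xset_\ell=\Yset\setminus\ell=\Xset\setminus(\ell\cup M'\cup M'')$. The count follows from inclusion--exclusion for the three lines through the single node $O$: $|\ell\cup M'\cup M''|=n+(n+1)+(n+1)-1-1-1+1=3n$, whence $|\Xset_\ell|=\binom{n+2}{2}-3n=\binom{n-1}{2}$.

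The step needing genuine care is the incidence bookkeeping around the common node $O$: at each use of Proposition~\ref{prp:n+1points} one must check that the collinear zeros invoked really do lie off the line about to be divided out, since the degree bound is met with nothing to spare each time. The remaining manipulations are routine divisibility bookkeeping of the same flavour as in Lemma~\ref{lem:CG1}.
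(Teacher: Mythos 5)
Your proof is correct and follows essentially the same route as the paper's: the same divisibility argument via Proposition~\ref{prp:n+1points} (divide out $M''$ using its $n$ nodes off $\ell$, then divide out $M'$) rules out the nodes of $(M'\cup M'')\setminus\ell$, and the $n$-node case rests on the same division of $\ell$ out of the cofactor of $M'M''$. Your only departures are cosmetic: you package that last division as ``$\ell$ is a maximal line of the $(n-2)$-poised set $\Xset\setminus(M'\cup M'')$'' and invoke Corollary~\ref{cor:n+1points}(ii), and you spell out the factorization $p_{A,\Xset}^\star=M'M''\,p_{A,\Yset}^\star$ underlying the first displayed identity, which the paper leaves implicit.
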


\section{Lines in $n$-poised sets \label{s:nlnp}}

\subsection{On $n$-node lines in $n$-poised sets \label{ss:nlnp}}

Let us start our results with the following
\begin{proposition}\label{prp:linennp}
Let $\Xset$ be an $n$-poised set and ${\ell}$ be a line passing through exactly $n$ nodes of $\Xset.$
Then the following hold:
\begin{enumerate}
\setlength{\itemsep}{0mm}
\item
$|\Xset_{\ell}| \leq \binom{n}{2};$
\item
If $|\Xset_{\ell}| \ge \binom{n-1}{2}+1$ then there is a maximal line
$M_0$ such that $M_0 \cap {\ell} \notin \Xset.$ Moreover, we have that
$\Xset_{\ell}=\Xset\setminus (M_0\cup {\ell}).$ Hence it is an $(n-2)$-poised
set. In particular we have that $|\Xset_{\ell}|=\binom{n}{2};$
\item
If $\binom{n-1}{2}\geq |\Xset_{\ell}| \geq \binom{n-2}{2}+2$, then $|\Xset_{\ell}| = \binom{n-1}{2}.$
Moreover,  $\Xset_{\ell}$ is an $(n-3)$-poised set and there is a conic $\beta\in \Pi_2$ such that $\Xset_{\ell}=\Xset\setminus (\beta \cup {\ell}).$ Furthermore, we have that $\Nset_{\ell} \subset \beta$ and $|(\beta\setminus {\ell})\cap\Xset|=|\Nset_{\ell}|=2n.$
Besides these $2n$ nodes the conic may contain at most one extra node, which necessarily belongs to ${\ell}.$ If
$\beta$ is reducible: $\beta={\ell}_1{\ell}_2$ then we have that $|{\ell}_i\cap(\Xset\setminus \ell)|=n,\ i=1,2.$
\end{enumerate}
\end{proposition}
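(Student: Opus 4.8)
The plan is to work throughout with the partition $\Xset=(\ell\cap\Xset)\sqcup\Xset_\ell\sqcup\Nset_\ell$ into the $n$ nodes on $\ell$, the nodes using $\ell$, and the remaining nodes; it gives the identity
\[
|\Xset_\ell|+|\Nset_\ell|=N-n=\binom{n+1}{2}+1,
\]
so that an upper bound for $|\Xset_\ell|$ is exactly a lower bound for $|\Nset_\ell|$. The driving observation is that $\ell$ is not a maximal line, since it carries only $n<n+1$ nodes; hence $\Nset_\ell\neq\emptyset$ by Proposition~\ref{prp:mc}, and so $\Nset_\ell$ is \emph{essentially} $(n-1)$-dependent by Proposition~\ref{prp:Nq}. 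I will repeatedly play this essential $(n-1)$-dependence against the $(n-1)$-independence forced on small node sets by Severi's Theorem~\ref{thm:severi} and its refinements, Propositions~\ref{prp:2n+1} and~\ref{prp:3n-1}, and then translate the resulting collinearity/conic structure of $\Nset_\ell$ into information about $\Xset_\ell$ via Lemma~\ref{lem:CG1} and the maximal-curve results of Proposition~\ref{prp:mc}.

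For (i): if $|\Nset_\ell|\le n=(n-1)+1$, then Theorem~\ref{thm:severi} makes the nonempty set $\Nset_\ell$ be $(n-1)$-independent, contradicting its essential $(n-1)$-dependence; hence $|\Nset_\ell|\ge n+1$ and $|\Xset_\ell|\le\binom{n+1}{2}+1-(n+1)=\binom{n}{2}$. For (ii): the hypothesis $|\Xset_\ell|\ge\binom{n-1}{2}+1$ rewrites as $|\Nset_\ell|\le 2n-1\le 2(n-1)+1$, so Proposition~\ref{prp:2n+1} gives $n+1$ collinear nodes inside the $(n-1)$-dependent set $\Nset_\ell$; the line $M_0$ through them carries $n+1$ nodes of $\Xset$, hence is maximal (Corollary~\ref{cor:n+1points}), and since all those nodes lie in $\Nset_\ell$ they avoid $\ell$, i.e.\ $M_0\cap\ell\notin\Xset$. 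Lemma~\ref{lem:CG1} then yields $\Xset_\ell=\Xset\setminus(M_0\cup\ell)$ and $|\Xset_\ell|=\binom{n}{2}$, while $(n-2)$-poisedness follows because $\Xset\setminus M_0$ is $(n-1)$-poised and $\ell$ is then a maximal line of it (it still carries $n$ nodes there, none of them on $M_0$).

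For (iii), nonvacuous only for $n\ge4$, the hypothesis forces $2n\le|\Nset_\ell|\le 3n-4=3(n-1)-1$, so Proposition~\ref{prp:3n-1} applies to the $(n-1)$-dependent set $\Nset_\ell$: either $n+1$ of its nodes are collinear --- impossible, since by the argument of (ii) this would give $|\Xset_\ell|=\binom{n}{2}>\binom{n-1}{2}$ --- or $2n$ of them lie on a conic $\beta$, which then has no repeated component and does not contain $\ell$ as a component (a line carries at most $n+1<2n$ nodes of an $n$-poised set, and $2n$ of the nodes involved lie off $\ell$). Removing $\beta$ via Remark~\ref{rem:essdep}, the set $\Nset_\ell\setminus\beta$ has at most $n-4<(n-3)+1$ nodes, hence is $(n-3)$-independent by Severi and so empty; thus $\Nset_\ell\subset\beta$. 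Since a conic carries at most $d(n,2)=2n+1$ nodes of $\Xset$ (Proposition~\ref{prp:ind1}), $|\Nset_\ell|\in\{2n,2n+1\}$; the value $2n+1$ is excluded because then the cubic $\ell\beta$, having no repeated component, would carry the $n$ nodes on $\ell$ together with all $2n+1$ nodes of $\Nset_\ell\subset\beta\setminus\ell$, i.e.\ more than $d(n,3)=3n$ nodes. Hence $|\Nset_\ell|=2n$, $|\Xset_\ell|=\binom{n-1}{2}$, and $\ell\beta$ carries exactly the $3n=d(n,3)$ distinct nodes $(\ell\cap\Xset)\sqcup\Nset_\ell$, so it is a maximal cubic; Proposition~\ref{prp:mc} then makes $\Xset\setminus(\ell\beta)=\Xset\setminus(\ell\cup\beta)$ an $(n-3)$-poised set, and a short count with those $3n$ nodes identifies this set with $\Xset_\ell$, shows no node of $\Xset_\ell$ lies on $\beta$, gives $(\beta\setminus\ell)\cap\Xset=\Nset_\ell$, and forces any further node of $\beta$ onto $\ell$ by the bound $2n+1$. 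Finally, if $\beta=\ell_1\ell_2$, put $a_i=|\ell_i\cap(\Xset\setminus\ell)|=|\ell_i\cap\Nset_\ell|$, so $a_1+a_2=2n+c$ with $c\in\{0,1\}$ recording whether $\ell_1\cap\ell_2$ is a node; if some $a_i\ge n+1$ then $\ell_i$ is a maximal line with $\ell_i\cap\ell\notin\Xset$, and Lemma~\ref{lem:CG1} again forces $|\Xset_\ell|=\binom{n}{2}$, a contradiction, so $a_1,a_2\le n$, whence $a_1=a_2=n$ and $c=0$.

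The step I expect to carry the weight is the one in (iii) that excludes $|\Nset_\ell|=2n+1$ and recognizes $\ell\beta$ as a maximal cubic: one must be careful that $\ell$ is genuinely not a component of $\beta$, that the $n$ nodes on $\ell$ are disjoint from the $2n$ nodes of $\Nset_\ell$ lying on $\beta$, and that the single admissible extra node of $\beta$ is forced onto $\ell$ --- and it is precisely here that the count $d(n,3)=3n$ is tight and does all the work, and that one needs $n\ge4$ both for $3\le n$ and for the Severi estimate $n-4<(n-3)+1$. Everything else reduces to binomial-coefficient bookkeeping and the standard fact that deleting a maximal line (resp.\ cubic) from a poised set leaves a poised set of the expected degree.
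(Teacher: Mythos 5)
Your proposal is correct and follows essentially the same route as the paper: bounding $|\Nset_{\ell}|$ via the partition of $\Xset$, playing the essential $(n-1)$-dependence of $\Nset_{\ell}$ (Proposition \ref{prp:Nq}) against Theorem \ref{thm:severi} and Propositions \ref{prp:2n+1}, \ref{prp:3n-1}, and then invoking Lemma \ref{lem:CG1} and the maximal cubic ${\ell}\beta$. You are in fact somewhat more explicit than the paper at a few points (excluding $|\Nset_{\ell}|=2n+1$, the $(n-2)$-poisedness in (ii), and the count $a_1+a_2=2n+c$ in the reducible case); the only slip is attributing the forcing of the extra node of $\beta$ onto ${\ell}$ to the bound $2n+1$ rather than to the cubic bound $3n$, which you had already used correctly.
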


\begin{proof}
\begin{enumerate}
\item
Assume by way of contradiction that $|\Xset_{\ell}| \ge \binom{n}{2}+1$. Then we obtain
\begin{equation*}
|\Nset_{\ell}| \le \binom{n+2}{2} - \left[\binom{n}{2}+1\right] - n = n.
\end{equation*}
This is a contradiction, since on one hand, in view of Proposition \ref{prp:mc}, the nonempty set $\Nset_{\ell}$ is $(n-1)$-dependent
and on the other hand, in view of Theorem \ref{thm:severi}, it is $(n-1)$-independent.

\item
In this case we have that
\begin{equation*}
|\Nset_{\ell}| \le \binom{n+2}{2} - \left[\binom{n-1}{2}+1\right] - n = 2n-1=2(n-1)+1.
\end{equation*}
Now let us make use of Proposition \ref{prp:2n+1}. Since $\Nset_{\ell}$ is $(n-1)$-dependent, we get that there is a line $M_0$ passing through $n+1$ nodes of $\Nset_{\ell}.$
The line $M_0$ is maximal and therefore cannot pass through any more nodes. Hence we obtain that
$M_0 \cap {\ell} \notin \Xset.$ Thus, in view of Lemma \ref{lem:CG1}, we have that $\Xset_{\ell}$ is an $(n-2)$-poised set.

\item
In this case we have that
\begin{equation*}
|\Nset_{\ell}| \leq \binom{n+2}{2} - \left[\binom{n-2}{2}+2\right] - n = 3n-4 = 3(n-1)-1.
\end{equation*}
\end{enumerate}

\noindent Since the set $\Nset_{\ell}$ is $(n-1)$-dependent, we get from Proposition \ref{prp:3n-1}, that either

a) there is a line $M_0$ passing through $n+1$ nodes in $\Nset_{\ell},$ or

b) there is a conic $\beta \in \Pi_2$ passing through $2n=2(n-1)+2$ nodes in $\Nset_{\ell}.$

Let us start with the case a). We have for the maximal line $M_0$, in the same way as in the case ii),
that $M_0 \cap {\ell} \notin \Xset$ and therefore $|\Xset|=\binom{n}{2}.$ This contradicts our assumption in iii).

In the case b) let us first show that $|\Nset_{\ell}|=2n.$ Indeed, in
view of Proposition \ref{prp:mc}, we have that $\Nset_{\ell}$ is
essentially $(n-1)$-dependent. Then, suppose $\Nset_{\ell},$ besides the nodes in
$\beta$,  contains $t$ nodes outside of it, where $t\le
n-4\ (=3n-4-2n).$ In view of Remark \ref{rem:essdep} these $t$ nodes
must be $(n-3)$-essentially dependent. Therefore, we get from
Theorem \ref{thm:severi} that $t=0.$ Now notice that ${\ell}\beta$ is a maximal cubic since it passes through $3n$ nodes.
The conic $\beta$, besides the
$2n$ nodes, may contain at most $1$ extra node, since the
set $\Xset$ is $n$-independent. But, if the extra node does not belong to ${\ell},$ then the
cubic ${\ell}\beta$ would contain $3n+1$ nodes, which is a contradiction.

Finally assume that the conic is reducible: $\beta={\ell}_1{\ell}_2.$ Then, since
$\Nset_{\ell}$ is $(n-1)$-essentially dependent, we readily get that each
of the lines passes through exactly $n$ nodes from the $2n.$
\end{proof}

\subsection{ On $(n-1)$-node lines in $n$-poised
sets\label{s:n-1lnp}}

\begin{proposition}\label{prp:n-1lnp}
Let $\Xset$ be an $n$-poised set and ${\ell}$ be a line passing through
exactly $n-1$ nodes of $\Xset.$ Assume also that $|\Xset_{\ell}| \geq
\binom{n-2}{2}+3$. \\ Then we have that $\Xset_{\ell}$ is $(n-3)$-poised
set. Hence $|\Xset_{\ell}| = \binom{n-1}{2}$ and $|\Nset_{\ell}|=2n+1.$
Moreover, these $2n+1$ nodes are located in the following way:
\begin{enumerate}
\item
$n+1$ nodes are in a maximal line $M_0$ and
\item
$n$ nodes are in an $n$-node line $M_0'.$
\end{enumerate}
Furthermore, besides these $n$ nodes, the line $M_0'$ may contain at
most one extra node, which necessarily belongs to $M_0.$
\end{proposition}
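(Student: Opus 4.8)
The plan is to mimic the structure of the proof of Proposition~\ref{prp:linennp}(iii), but now with an $(n-1)$-node line, which shifts all the dimension counts by one and forces a more rigid configuration. First I would compute $|\Nset_{\ell}|$ from below. Since ${\ell}$ passes through $n-1$ nodes and $|\Xset_{\ell}| \ge \binom{n-2}{2}+3$, we get
\begin{equation*}
|\Nset_{\ell}| \le \binom{n+2}{2} - \left[\binom{n-2}{2}+3\right] - (n-1) = 3n-2 = 3(n-1)+1.
\end{equation*}
By Proposition~\ref{prp:mc} the nonempty set $\Nset_{\ell}$ is $(n-1)$-dependent (indeed essentially $(n-1)$-dependent by Proposition~\ref{prp:Nq}), and it has at most $3(n-1)+1$ nodes — one more than the range covered by Proposition~\ref{prp:3n-1}. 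So the plan is to invoke Theorem~\ref{thm:3n} (the $3n$-node classification) instead, applied with $n$ replaced by $n-1$: either (a) $n+1$ nodes of $\Nset_{\ell}$ are collinear, or (b) $2n+1$ nodes of $\Nset_{\ell}$ lie on a conic, or (c) $|\Nset_{\ell}| = 3(n-1)$ and $\Nset_{\ell}$ is the complete intersection of a cubic with an $(n-1)$-degree curve.

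Next I would rule out cases (b) and (c) and extract the stated structure from case (a). In case (a), the line $M_0$ through $n+1$ nodes of $\Nset_{\ell}$ is maximal for $\Xset$; since $M_0$ cannot meet ${\ell}$ at a node (a maximal line already carries its full $n+1$ nodes), Lemma~\ref{lem:CG1} does \emph{not} directly apply because ${\ell}$ is an $(n-1)$-node line, not an $n$-node line — so instead I would pass to $\Xset \setminus M_0$, which is $(n-1)$-poised, with ${\ell}$ still an $(n-1)$-node line there, i.e.\ a maximal line of the smaller set. Then Corollary~\ref{cor:n+1points}(2) applied inside $\Xset\setminus M_0$ shows every node of $(\Xset\setminus M_0)\setminus {\ell}$ uses ${\ell}$, so $\Nset_{\ell} \subseteq M_0$, giving $|\Nset_{\ell}| \le n+1$ — but then $|\Xset_{\ell}| \ge \binom{n+2}{2} - (n+1) - (n-1) = \binom{n}{2} > \binom{n-1}{2}$, which I expect to contradict Proposition~\ref{prp:linennp}(i) or the general bound, forcing us out of this sub-possibility unless we are more careful. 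This tension is the crux: the correct reading is that $\Nset_{\ell}$ must be \emph{essentially} $(n-1)$-dependent and strictly larger than $n+1$, so after removing the maximal line $M_0$ the residual set $\Nset_{\ell}\setminus M_0$ must be essentially $(n-2)$-dependent (Remark~\ref{rem:essdep}) with $|\Nset_{\ell}\setminus M_0| \le 3(n-1)+1-(n+1) = 2n-3 = 2(n-2)+1$; by Proposition~\ref{prp:2n+1} this residual set must contain $n$ collinear nodes on a line $M_0'$, and counting shows $M_0'$ is an $n$-node line of $\Xset$.

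Then I would finish by bookkeeping exactly as in Proposition~\ref{prp:linennp}(iii): first, $\Nset_{\ell}$ contains no node off $M_0 \cup M_0'$, because such leftover nodes would have to be essentially $(n-3)$-dependent (apply Remark~\ref{rem:essdep} twice, peeling off $M_0$ then $M_0'$) and their count is $\le 2n-3-n = n-3 < n-2$, so by Theorem~\ref{thm:severi} there are none; hence $|\Nset_{\ell}| = (n+1)+n = 2n+1$, which pins down $|\Xset_{\ell}| = \binom{n+2}{2} - (2n+1) - (n-1) = \binom{n-1}{2}$. The cubic $M_0 M_0' {\ell}$ passes through $(n+1)+n+(n-1) = 3n$ nodes and is therefore a maximal cubic, so no further node of $\Xset$ lies on it; in particular $M_0'$ can carry at most one extra node of $\Xset$ beyond its $n$, and any such extra node must lie on $M_0$ (otherwise the maximal cubic would contain $3n+1$ nodes). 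Finally, $\Xset_{\ell} = \Xset \setminus (M_0 \cup M_0' \cup {\ell})$ and, arguing as in Lemma~\ref{lem:CG2} (the two maximal-type lines $M_0, M_0'$ meet ${\ell}$ appropriately after the reduction), $\Xset_{\ell}$ is seen to be $(n-3)$-poised.

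**Main obstacle.** The delicate point I anticipate is eliminating cases (b) and (c) of Theorem~\ref{thm:3n} and handling the boundary case $|\Nset_{\ell}| = 3(n-1)$ — in case (b) a conic through $2n+1 = 2(n-1)+3$ nodes would, via $\Xset$'s $n$-independence and the $d(n,2)=2n+1$ maximal-conic count, force $\beta{\ell}$ or $\beta$ itself to overshoot; in case (c) the complete-intersection structure must be shown incompatible with ${\ell}$ being only an $(n-1)$-node line. Getting the essential-dependence reductions to interact correctly with the slightly-too-large bound $3(n-1)+1$ (rather than the clean $3(n-1)-1$ of Proposition~\ref{prp:3n-1}) is where the care is needed.
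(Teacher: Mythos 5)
Your outline parallels the paper's (bound $|\Nset_{\ell}|$, classify the essential $(n-1)$-dependence, extract first a maximal line $M_0$ and then an $n$-node line $M_0'$, rule out the conic alternative), but it stumbles at the very first step and never recovers. The count is
$\binom{n+2}{2}-\bigl[\binom{n-2}{2}+3\bigr]-(n-1)=3n-4=3(n-1)-1$, not $3n-2=3(n-1)+1$: you have miscomputed by $2$. With the correct value, Proposition \ref{prp:3n-1} (applied with degree $n-1$) is exactly in range and yields only the two alternatives ``$n+1$ collinear nodes'' or ``$2(n-1)+2=2n$ nodes on a conic''; with your value $3(n-1)+1$ even Theorem \ref{thm:3n} is out of range (it covers sets of at most $3(n-1)$ nodes), so the classification you invoke does not apply to the set you actually have, and both your case (c) and your conic count $2n+1$ are artifacts of this arithmetic error.

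Second, and more seriously, the step that produces $M_0'$ is broken. In the $(n-1)$-poised set $\Xset\setminus M_0$ a maximal line carries $n$ nodes, so ${\ell}$, with only $n-1$ nodes, is \emph{not} maximal there; Corollary \ref{cor:n+1points}(ii) does not apply, and the conclusion $\Nset_{\ell}\subseteq M_0\Rightarrow$ contradiction that you extract from it is unsupported — Proposition \ref{prp:linennp}(i) concerns $n$-node lines, and the ``general bound'' $|\Xset_{\ell}|\le\binom{n-1}{2}$ for $(n-1)$-node lines is Corollary \ref{ge}, which is deduced \emph{from} the proposition you are proving, so appealing to it is circular. Yet you do need $\Nset_{\ell}\setminus M_0\neq\emptyset$ before Remark \ref{rem:essdep} and Proposition \ref{prp:2n+1} can hand you $M_0'$. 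The paper's argument is: if $\Nset_{\ell}\subseteq M_0$, then every node of $\Xset\setminus({\ell}\cup M_0)$ uses both ${\ell}$ and the maximal line $M_0$, so by Proposition \ref{prp:mc} the conic ${\ell}M_0$ would be maximal — impossible, since it contains only $2n$ nodes instead of $d(n,2)=2n+1$. A companion argument (the cubic ${\ell}\beta$ would be maximal while containing only $3n-1$ nodes instead of $3n$) is what eliminates the conic alternative, which you explicitly leave open as the ``main obstacle.'' So as written the proposal has genuine gaps at the classification step, at the existence of $M_0'$, and at the exclusion of the conic case.
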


\begin{proof}
We have that
\begin{equation*}
|\Nset_{\ell}| \leq \binom{n+2}{2} - \left[\binom{n-2}{2}+3\right] -
(n-1) = 3n-4 = 3(n-1)-1.
\end{equation*}
According to Proposition \ref{prp:mc} the set $\Nset_{\ell}$ is essentially
$(n-1)$-dependent. Therefore, in view of Proposition \ref{prp:3n-1},
we have that either
\begin{enumerate}
\item
there is a line $M_0$ passing through $n+1$ nodes of $\Nset_{\ell}$, or
\item
there is a conic $\beta \in \Pi_2$ passing through $2n=2(n-1)+2$
nodes of $\Nset_{\ell}.$
\end{enumerate}

Assume that i) holds. Then, suppose there are $s$ nodes in
$\Nset_{\ell}$ outside the line $M_0,$ where $s\le 2(n-2)-1
(=2n-5=3n-4-n-1).$

Let us verify that $s\neq 0.$ Assume conversely that $s=0.$ Then we
have that any node $A\in \Xset\setminus ({\ell}\cup M_0)$ uses the line
${\ell}$ and maximal line $M_0,$ i.e.,
\begin{equation*}
p^*_A={\ell}M_0q, \qquad q\in\Pi_{n-2}.
\end{equation*}
This means, in view of Proposition \ref{prp:mc} (part i)
$\Leftrightarrow$ ii)), that the conic ${\ell}M_0$ is maximal, which is
contradiction, since it passes through only $2n$ nodes (instead of
$2n+1$ nodes).

Then, in view of Remark \ref{rem:essdep} these $s$ nodes must be
$(n-2)$-essentially dependent. Therefore, by Proposition
\ref{prp:2n+1}, there is a line $M_0'$ passing through $n$ nodes of
$\Nset_{\ell}\setminus M_0.$ Now, suppose there are $t$ nodes in $\Nset_{\ell}$
outside the lines $M_0$ and $M_0',$  where $t:\le
(n-3)-2 \ (=n-5=2n-5-n).$  These $t$ nodes, in view of Remark \ref{rem:essdep}, must be
essentially $(n-3)$-dependent. Thus, we conclude from Theorem
\ref{thm:severi} that $t=0$ and therefore $|\Nset_{\ell}|=2n+1.$

Now, it remains to verify that the case ii) is impossible.

Thus assume that ii) holds. Denote the number of nodes in $\Nset_{\ell}$
outside the conic $\beta$ by $t.$ We have that $t \le (n-2)-2 \
(=n-4=3n-4-2n).$ In view of Remark \ref{rem:essdep} these $t$ nodes
must be $(n-3)$-essentially dependent. Therefore, by Theorem
\ref{thm:severi}, we obtain that $t=0$ and therefore $|\Nset_{\ell}|=2n.$

Now we have that any node $A\in \Xset\setminus ({\ell}\cup\beta)$ uses
the line ${\ell}$. This means
\begin{equation*}
p^*_A={\ell}q, \qquad q\in\Pi_{n-1}.
\end{equation*}
The curve $q$ passes through all the $2n$ nodes in $\beta$. By the
Bezout theorem we conclude that $q$ divides $\beta.$ Indeed, this is
evident when $\beta$ is irreducible. Now assume that $\beta$ is
reducible, i.e., $\beta={\ell}_1{\ell}_2.$  The set $\Nset_\beta$ is
$(n-1)$-essentially dependent. Therefore each line ${\ell}_i, i=1,2,$
passes through exactly $n$ nodes of $\Nset_\beta$ and hence divides
$q.$ Thus we have that $q=\beta r, \ q\in\Pi_{n-3}.$ Finally we get
\begin{equation*}
p^*_A={\ell}\beta r, \qquad r\in\Pi_{n-3}\quad \hbox{for any}\quad  A\in
\Xset\setminus ({\ell}\cup\beta).
\end{equation*}
This means that each node outside ${\ell}$ and $\beta$ uses the reducible
cubic ${\ell}\beta.$ Therefore, by Proposition \ref{prp:mc} (part i)
$\Leftrightarrow$ ii)), the latter curve is maximal, which is
contradiction, since it passes through only $3n-1$ nodes (instead of
$3n$ nodes).
\end{proof}

\begin{corollary}\label{ge}
Let $\Xset$ be an $n$-poised set and ${\ell}$ be a line passing through
exactly $n-1$ nodes of $\Xset.$ Then we have that $|\Xset_{\ell}| \le
\binom{n-1}{2}.$
\end{corollary}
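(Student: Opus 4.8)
The plan is to derive Corollary \ref{ge} directly from Proposition \ref{prp:n-1lnp} by a short dichotomy on the size of $\Xset_{\ell}$. Since $\ell$ is an $(n-1)$-node line, Proposition \ref{prp:n-1lnp} already handles the "large" regime: if $|\Xset_{\ell}| \ge \binom{n-2}{2}+3$, then the proposition concludes that $\Xset_{\ell}$ is an $(n-3)$-poised set, so in particular $|\Xset_{\ell}| = \binom{n-1}{2}$, which is exactly the bound we want (with equality). So the only thing left is the complementary range $|\Xset_{\ell}| \le \binom{n-2}{2}+2$.

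The key observation is that $\binom{n-2}{2}+2 \le \binom{n-1}{2}$ for all $n$ for which the statement is non-vacuous. Indeed, $\binom{n-1}{2} - \binom{n-2}{2} = n-2$, so the inequality $\binom{n-2}{2}+2 \le \binom{n-1}{2}$ is equivalent to $2 \le n-2$, i.e. $n \ge 4$. For $n \le 3$ an $(n-1)$-node line carries at most $2$ nodes, and the claim $|\Xset_{\ell}| \le \binom{n-1}{2}$ can be checked trivially (for $n=2$, $\ell$ is a $1$-node line and by Severi-type independence $|\Xset_{\ell}|=0=\binom{1}{2}$; for $n=3$, $\ell$ is a $2$-node line and \eqref{2nodel} gives $|\Xset_{\ell}|\le 1=\binom{2}{2}$), or one simply notes the corollary is stated for the generic $n$ in which Proposition \ref{prp:n-1lnp} operates. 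Either way, once $n\ge 4$, in the range $|\Xset_{\ell}| \le \binom{n-2}{2}+2$ we trivially have $|\Xset_{\ell}| \le \binom{n-2}{2}+2 \le \binom{n-1}{2}$, and in the range $|\Xset_{\ell}| \ge \binom{n-2}{2}+3$ Proposition \ref{prp:n-1lnp} gives $|\Xset_{\ell}| = \binom{n-1}{2}$. Hence in all cases $|\Xset_{\ell}| \le \binom{n-1}{2}$.

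So the proof is essentially a two-line case split: either $|\Xset_{\ell}|$ is below the threshold $\binom{n-2}{2}+3$ of Proposition \ref{prp:n-1lnp}, in which case the bound is immediate from the arithmetic inequality $\binom{n-2}{2}+2\le\binom{n-1}{2}$, or it is at or above the threshold, in which case Proposition \ref{prp:n-1lnp} pins $|\Xset_{\ell}|$ to exactly $\binom{n-1}{2}$. I do not anticipate any real obstacle here; the only mild subtlety is making sure the small cases $n\le 3$ don't escape the argument, but there $\ell$ has at most two nodes and the statement is either vacuous or an instant consequence of \eqref{2nodel} and Severi's theorem. I would write the corollary's proof in exactly this form, invoking Proposition \ref{prp:n-1lnp} for the upper range and the elementary binomial identity $\binom{n-1}{2}-\binom{n-2}{2}=n-2$ for the lower range.
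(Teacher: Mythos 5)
Your proof is correct and is essentially the paper's own argument: both reduce to Proposition \ref{prp:n-1lnp} via the identity $\binom{n-1}{2}-\binom{n-2}{2}=n-2$ (which makes the threshold $\binom{n-2}{2}+3$ reachable exactly when $n\ge 4$) and handle the small cases through \eqref{2nodel}. The only difference is presentational: the paper runs the same dichotomy as a proof by contradiction, assuming $|\Xset_{\ell}|\ge\binom{n-1}{2}+1$ and noting this forces the hypothesis of Proposition \ref{prp:n-1lnp}, whereas you split directly on whether $|\Xset_{\ell}|$ meets that threshold.
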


\begin{proof}
Assume by way of contradiction that $|\Xset_{\ell}| \ge
\binom{n-1}{2}+1.$ Notice that
$$\binom{n-1}{2}+1\ge \binom{n-2}{2}+3\ \hbox{if} \ n\ge 4.$$
Now, in view of Proposition \ref{prp:n-1lnp}, we get that $|\Xset_{\ell}|
\le \binom{n-1}{2},$ which contradicts our assumption. It remains to note that
 Corollary in the case $n =3$ is a special case in \eqref{2nodel}.
\end{proof}

\section{Lines in $GC_n$ sets\label{s:klgcn}}

\subsection{On $k$-node lines in $GC_n$ sets\label{ss:klgcn}}

\begin{proposition}\label{prp:linekgcn}
Assume that Conjecture \ref{conj:GM} holds for all degrees up to
$\nu$. Let $\Xset$ be a $GC_n$ set, $n \leq \nu,$ and ${\ell}$ be a line
passing through exactly $k$ nodes of $\Xset.$ Then the set $\Xset_{\ell}$
is $(k-2)$-independent set. Moreover, for each node $A\in \Xset_{\ell}$
there is a $(k-2)$-fundamental polynomial that divides the
$n$-fundamental polynomial of $A$ in $\Xset.$
\end{proposition}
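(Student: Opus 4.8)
The plan is to induct on $n$, using that a $GC_n$-set possesses at least three maximal lines (Theorem~\ref{thm:CG}, available since the GM conjecture holds up to $\nu\ge n$), and to peel off a carefully chosen maximal line. First I would dispose of the trivial ranges: if $k\le 2$, then by \eqref{2nodel} the set $\Xset_{\ell}$ has at most one node, and a single node is $0$-independent, hence $(k-2)$-independent for $k=2$; the case $k\le 1$ is vacuous since then $\Xset_{\ell}=\emptyset$. For the inductive step assume $k\ge 3$ and that the statement holds for all $GC_{n-1}$-sets. Pick a node $A\in\Xset_{\ell}$; I want to produce a $(k-2)$-fundamental polynomial for $A$ in $\Xset_{\ell}$ that divides $p^\star_{A,\Xset}$.

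The key step is to choose a maximal line $M$ of $\Xset$ that $A$ uses, which exists by the remark following Theorem~\ref{thm:CG}. If $A\in{\ell}$ is excluded (we may assume $A\notin{\ell}$, since nodes on ${\ell}$ never use ${\ell}$), there are two cases according to the position of $M$ relative to ${\ell}$. If $M\cap{\ell}\notin\Xset$, then by Lemma~\ref{lem:CG1} we have $\Xset_{\ell}=(\Xset\setminus M)_{\ell}$, and in $\Xset\setminus M$ — which is a $GC_{n-1}$-set — the line ${\ell}$ still passes through exactly $k$ nodes (it misses $M$ at a non-node), so the induction hypothesis applies directly and gives the desired $(k-2)$-fundamental polynomial for $A$ dividing $p^\star_{A,\Xset\setminus M}$; and $p^\star_{A,\Xset\setminus M}$ divides $p^\star_{A,\Xset}$ because $A$ uses $M$, i.e. $p^\star_{A,\Xset}=M\cdot p^\star_{A,\Xset\setminus M}$ up to a constant (this is the standard fact that the fundamental polynomial in $\Xset$ of a node using a maximal line factors through the fundamental polynomial in $\Xset\setminus M$). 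If instead $M\cap{\ell}\in\Xset$, say this node is $B$, then passing to $\Xset\setminus M$ the line ${\ell}$ now passes through only $k-1$ nodes; the induction hypothesis still gives a $(k-3)$-fundamental polynomial for $A$ in $(\Xset\setminus M)_{\ell}$ dividing $p^\star_{A,\Xset\setminus M}$, and multiplying by $M$ restores a $(k-2)$-fundamental polynomial for $A$ in $\Xset_{\ell}$: indeed $M$ vanishes at $B$, the one node of $\Xset_{\ell}$ lost when deleting $M$, and at no other node of $\Xset_{\ell}$ (since $\Xset_{\ell}\cap M=\{B\}$ as $\Xset_{\ell}\cap{\ell}=\emptyset$ and... ), and it is a factor of $p^\star_{A,\Xset}$.

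The main obstacle, and the point that needs the most care, is the bookkeeping in the second case: after deleting $M$ one must check that $\Xset_{\ell}\setminus M$ really equals $(\Xset\setminus M)_{\ell}$ — that deleting a maximal line that $A$ uses does not change which \emph{other} nodes use ${\ell}$ except for dropping $B$ — and that $B$ itself does lie in $\Xset_{\ell}$, i.e. that $B$ uses ${\ell}$. For the latter: $B=M\cap{\ell}$ lies on ${\ell}$, so by the convention a node on a line does not ``use'' it — wait, so $B\notin\Xset_{\ell}$. Hence in the second case $\Xset_{\ell}$ and $(\Xset\setminus M)_{\ell}$ contain the same nodes and ${\ell}$ drops from $k$ to $k-1$ nodes in $\Xset\setminus M$, so by induction each $A\in\Xset_{\ell}$ gets a $(k-3)$-fundamental polynomial dividing $p^\star_{A,\Xset\setminus M}$; but then $M$ times it is a $(k-2)$-polynomial vanishing on all of $\Xset_{\ell}\setminus\{A\}$ and on $M$, which is more than enough — in fact this would make $\Xset_{\ell}$ look $(k-3)$-independent. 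This suggests that whenever $A$ uses a maximal line one can always arrange the first case by re-choosing $M$, or that the sharp bound $(k-2)$ forces the structure; so the real work is showing that \emph{some} maximal line $A$ uses meets ${\ell}$ off $\Xset$, reducing everything to Lemma~\ref{lem:CG1} and a clean induction. I expect this selection argument — using that $A$ lies on at least one but on a controlled number of maximal lines, and that the $n+1$ or so nodes of ${\ell}$ cannot all be cut out by maximal lines through $A$ — to be the crux of the proof.
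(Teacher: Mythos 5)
Your core construction --- remove a maximal line $M$ with $A\notin M$, apply the induction hypothesis to the $GC_{n-1}$-set $\Xset\setminus M$, and multiply the resulting fundamental polynomial by $M$ --- is exactly the paper's proof (the paper picks $M_{i_0}$ among three maximal lines guaranteed by Theorem \ref{thm:CG} so that $A\notin M_{i_0}$, which for a maximal line is the same as ``$A$ uses $M_{i_0}$''). But you derail yourself on the bookkeeping in the case $M\cap\ell\in\Xset$, and the position you finally land on is wrong. The error is the claim that $\Xset_\ell\cap M=\{B\}$ and, later, that $\Xset_\ell$ and $(\Xset\setminus M)_\ell$ ``contain the same nodes.'' As you yourself notice, $B=M\cap\ell$ lies on $\ell$ and hence is never in $\Xset_\ell$; but the nodes genuinely lost when you delete $M$ are those of $\Xset_\ell\cap M$, i.e.\ nodes of $M\setminus\ell$ that use $\ell$, and there can be up to $n$ of them (Corollary \ref{cor:linengcn}(ii) shows $|M\cap\Xset_\ell|=n-1$ or $n-2$ in typical situations). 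These nodes are exactly what the extra factor $M$ is for: $M\cdot p^\star_{A,(\Xset\setminus M)_\ell}$ vanishes on $\Xset_\ell\cap M$ because of the factor $M$, and on $(\Xset_\ell\setminus M)\setminus\{A\}$ because $\Xset_\ell\setminus M\subseteq(\Xset\setminus M)_\ell$ (which follows from $p^\star_{C,\Xset}=M\,p^\star_{C,\Xset\setminus M}$ for $C\notin M$). That is precisely why the degree is $k-2$ and not $k-3$: there is no phantom improvement to $(k-3)$-independence, because $p^\star_{A,(\Xset\setminus M)_\ell}$ alone need not vanish on $\Xset_\ell\cap M$.

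Having misdiagnosed this as a contradiction, you replace the correct argument by the claim that ``the real work is showing that some maximal line $A$ uses meets $\ell$ off $\Xset$.'' That claim is false and cannot be the crux: the situation where \emph{every} maximal line meets $\ell$ at a node of $\Xset$ is not only possible but is the substantive case --- it is exactly the hypothesis of Lemma \ref{lem:CG2}, the case $|\Xset_\ell|=\binom{n-1}{2}$ of Theorem \ref{thm:linengcn}(ii), and the whole of Subsection \ref{ss:casenge4}. A proof built on always reducing to Lemma \ref{lem:CG1} would simply fail there. Two smaller gaps: you never treat $k=n+1$ (where $\ell$ is itself maximal, every other maximal line meets it at a node, and $\Xset_\ell=\Xset\setminus\ell$ is $(n-1)$-poised directly), and in that case your selection ``a maximal line that $A$ uses'' could return $\ell$ itself when only three maximal lines exist. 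If you restore the second-paragraph construction with the corrected accounting of $\Xset_\ell\cap M$ and add the $k=n+1$ base case, you recover the paper's proof.
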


\begin{proof}
First suppose that $k=n+1,$ meaning that ${\ell}$ is a maximal line. Then
we have that $\Xset_{\ell}=\Xset\setminus {\ell}$ and this set is
$GC_{n-1}$-set and hence is $(n-1)$-poised.

In the case when ${\ell}$ is not maximal we will use induction on $n$.
The case $n=2$ is evident (see Subsection \ref{ss:case2}). Suppose
Proposition is true for all degrees less than $n$ and let us prove
it for $n$.

Suppose that there is a maximal line $M_0$ such that $M_0 \cap {\ell}
\notin \Xset.$ Then we get from Lemma \ref{lem:CG1} that $\Xset_{\ell} =
(\Xset_0)_{\ell}$ where $\Xset_0:=\Xset \setminus M_0.$ We have that the
set $\Xset_0$ is $GC_{n-1}$-set and ${\ell}$ passes through exactly $k$
nodes of $\Xset_0.$ Therefore by induction hypothesis for the degree
$n-1$ we get that $\Xset_{\ell}$ is $(k-2)$-independent.


Now, in view of Theorem \ref{thm:CG}, consider three maximal lines
for $\Xset$  and denote them by $M_i,\ i=1,2,3.$ It remains to
consider the case when each of these maximal lines intersects ${\ell}$ at
a node of $\Xset.$

We will prove that $\Xset_{\ell}$ is $(k-2)$-independent by finding a
$(k-2)$-fundamental polynomial for each node $A \in \Xset_{\ell}$. Since
$3$ maximal lines intersect each other at $3$ distinct nodes there
is $i_0 \in \{1,2,3\}$ such that $A \notin M_{i_0}$. We have that
the set $\Yset :=\Xset \setminus M_{i_0}$ is $GC_{n-1}$-set and ${\ell}$
passes through exactly $k-1$ nodes of $\Yset.$ Therefore by
induction hypothesis for the degree $n-1$ we get that the set
$\Yset_{\ell}$ is $(k-3)$-independent. Moreover, there is a
$(k-3)$-fundamental polynomial $ p^\star_{A, {\Yset}_{\ell}} \in
\Pi_{k-3}$ which divides $p^\star_{A, \Yset}.$

Now, since $\Xset_{\ell} \subset \Yset \cup M_{i_0},$ we get readily that
the polynomial
\begin{equation*}
M_{i_0}p^\star_{A, {\Yset}_{\ell}}\in \Pi_{k-2}
\end{equation*}
is a fundamental polynomial of $A$ in $\Xset_{\ell}.$ We get also that it
divides the polynomial $p^\star_{A,\Xset}=M_{i_0}p^\star_{A,
\Yset}.$
\end{proof}

Below we bring some simple consequences of the fact that the set
$\Xset_{\ell}$ is $(k-2)$-independent:
\begin{corollary}\label{col:indep}
Assume that the conditions of Proposition \ref{prp:linekgcn} hold.
Then the following hold.
\begin{enumerate}
\setlength{\itemsep}{0mm}
\item
$|\Xset_{\ell}| \leq \binom{k}{2};$
\item
$\Xset_{\ell}$ contains at most $k-1$ collinear nodes;
\item
For any curve $q$ of degree $m \leq k-2$ we have that
\begin{equation*}
|\Xset_{\ell}\cap q| \leq d(k-2,m).
\end{equation*}.
\end{enumerate}
\end{corollary}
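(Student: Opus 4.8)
The plan is to read off all three parts of Corollary~\ref{col:indep} directly from the fact, established in Proposition~\ref{prp:linekgcn}, that $\Xset_{\ell}$ is a $(k-2)$-independent set, combined with the general results on $n$-independence already collected in the excerpt. No new geometry is needed; the work is just instantiating known bounds with the parameter $k-2$ in place of $n$.

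For (i), I would argue that a $(k-2)$-independent set has cardinality at most $\dim\Pi_{k-2} = \binom{k}{2}$; this is the elementary necessary condition for $n$-independence noted right after the definition (fundamental polynomials are linearly independent, so their number cannot exceed $\dim\Pi_{k-2}$), applied with $n$ replaced by $k-2$. For (ii), I would invoke Corollary~\ref{cor:n+1points}(i) — or rather the contrapositive packaging via Severi/Proposition~\ref{prp:2n+1}: any $k$ collinear nodes form a set that is $(k-2)$-dependent (indeed $k = (k-2)+2$ collinear points are essentially $(k-2)$-dependent by Proposition~\ref{prp:n+1points}), so a $(k-2)$-independent set cannot contain $k$ collinear nodes; hence at most $k-1$. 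For (iii), I would use Proposition~\ref{prp:ind1}(i) with degree parameter $k-2$: any subset of a curve of degree $m \le k-2$ without multiple components consisting of more than $d(k-2,m)$ nodes is $(k-2)$-dependent; since $\Xset_{\ell}$ is $(k-2)$-independent, $|\Xset_{\ell}\cap q| \le d(k-2,m)$. One should remark that if $q$ has multiple components one replaces it by its reduction, which only decreases the degree, so the bound still holds; alternatively, for $m\le 2$ the only concern is a double line, handled by (ii).

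A small bookkeeping point worth stating explicitly: the claims are only non-vacuous when $k \ge 2$ (for $k \le 1$ the set $\Xset_{\ell}$ is empty or a single node and everything is trivial, or one simply interprets $\Pi_{k-2}$ appropriately), and Proposition~\ref{prp:linekgcn} already guarantees $\Xset_{\ell}$ is $(k-2)$-independent under the stated hypotheses, so nothing more about the Gasca--Maeztu conjecture is used here — the assumption that Conjecture~\ref{conj:GM} holds up to $\nu$ is inherited from Proposition~\ref{prp:linekgcn} and needs no further appeal.

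I do not expect a genuine obstacle: the whole content of the corollary is that "$(k-2)$-independent" is a strong structural restriction, and each of the three items is the specialization of a standard $n$-independence bound (cardinality $\le \dim\Pi_n$; no $n+2$ collinear; no more than $d(n,m)$ on a degree-$m$ curve) to $n = k-2$. The only mild care needed is making sure the hypotheses of Proposition~\ref{prp:ind1} (no multiple components) are met or circumvented in part (iii), which is the reason I would phrase that step via reduction to the squarefree part of $q$.
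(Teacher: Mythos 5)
Your proposal is correct and matches the paper's intent exactly: the paper offers no separate argument, presenting the corollary as ``simple consequences'' of the $(k-2)$-independence of $\Xset_{\ell}$ established in Proposition~\ref{prp:linekgcn}, which is precisely what you spell out (cardinality bound from $\dim\Pi_{k-2}=\binom{k}{2}$, no $k$ collinear nodes via Proposition~\ref{prp:n+1points}, and the curve bound via Proposition~\ref{prp:ind1}, with the squarefree reduction correctly handling the multiple-components hypothesis). Your direct proof of (ii) is equivalent to the paper's remark that (ii) is the case $m=1$ of (iii).
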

Note, that ii) is a special case of iii) when $m=1.$ Let us mention
that i) and ii) were proved in \cite{CG03}, Theorem 4.5.

\bigskip

\subsection{On $n$-node lines in $GC_n$ sets\label{s:nlgcn}}

Next, let us present a main result of this paper:
\begin{theorem}\label{thm:linengcn}
Assume that Conjecture \ref{conj:GM} holds for all degrees up to
$\nu$. Let $\Xset$ be a $GC_n$ set, $n \leq \nu$ and ${\ell}$ be a line
passing through exactly $n$ nodes of the set $\Xset.$ Then we have
that
\begin{equation} \label{2bin} |\Xset_{\ell}| = \binom{n}{2}\quad \hbox{or} \quad \binom{n-1}{2}.
\end{equation}
Also, the following hold:
\begin{enumerate}
\item
If $|\Xset_{\ell}| = \binom{n}{2}$ then there is a maximal line $M_0$ such that $M_0 \cap {\ell} \notin \Xset.$ Moreover,
we have that $\Xset_{\ell}=\Xset\setminus ({\ell}\cup M_0).$ Hence it is a $GC_{n-2}$ set;

\item
If $|\Xset_{\ell}| = \binom{n-1}{2}$ then there are two maximal lines $M', M'',$ such that $M' \cap M'' \cap {\ell} \in \Xset.$
Moreover,
we have that $\Xset_{\ell}=\Xset\setminus ({\ell}\cup M'\cup M'').$ Hence is a $GC_{n-3}$ set.
\end{enumerate}
\end{theorem}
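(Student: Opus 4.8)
The strategy is to reduce to the two Carnicer--Gasca lemmas (Lemma~\ref{lem:CG1} and Lemma~\ref{lem:CG2}), which already pin down $|\Xset_\ell|$ to be $\binom{n}{2}$ or $\binom{n-1}{2}$ together with the stated structural conclusions, once we know which geometric configuration of maximal lines occurs. By Theorem~\ref{thm:CG}, since the GM conjecture holds up to $\nu\ge n$, the set $\Xset$ has at least three maximal lines $M_1,M_2,M_3$. Each of these is a maximal line, and no three maximal lines of an $n$-poised set are concurrent. The crux is to analyze how these maximal lines meet the $n$-node line $\ell$. There are exactly three mutually exclusive cases: (a) some $M_i$ meets $\ell$ at a point \emph{not} in $\Xset$; (b) every $M_i$ meets $\ell$ at a node of $\Xset$, and no two of them meet $\ell$ at the \emph{same} node; (c) two of them, say $M',M''$, meet $\ell$ at a common node of $\Xset$.

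First I would dispose of the easy cases. In case (a), Lemma~\ref{lem:CG1} applied to $M_0=M_i$ immediately gives $\Xset_\ell=\Xset\setminus(\ell\cup M_0)$ and $|\Xset_\ell|=\binom{n}{2}$; that $\Xset\setminus(\ell\cup M_0)$ is a $GC_{n-2}$ set follows because $M_0$ is maximal (so $\Xset\setminus M_0$ is $GC_{n-1}$) and then $\ell$ becomes a maximal line for the $(n-1)$-poised set $\Xset\setminus M_0$ (it carries $n$ of its nodes), so removing it yields a $GC_{n-2}$ set by the remark after Conjecture~\ref{conj:GM} and Proposition~\ref{prp:mc}. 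In case (c), Lemma~\ref{lem:CG2} applied to $M',M''$ gives $\Xset_\ell=\Xset\setminus(\ell\cup M'\cup M'')$ and $|\Xset_\ell|=\binom{n-1}{2}$; the $GC_{n-3}$ conclusion follows by iterating the maximal-line removal three times (first $M'$, then $M''$, then $\ell$, checking at each stage that the next line is maximal for the smaller poised set).

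The real work is to rule out case (b), i.e.\ to show that if all three maximal lines meet $\ell$ at \emph{three distinct} nodes of $\Xset$, then in fact one of the earlier two configurations is forced (so this ``pure'' case (b) cannot stand alone). This is where I expect the main obstacle. The plan here is a counting/degree argument on $\Nset_\ell$, the set of nodes neither on $\ell$ nor using $\ell$: by Proposition~\ref{prp:mc}, $\Nset_\ell$ is essentially $(n-1)$-dependent, and $|\Xset|=\binom{n+2}{2}$ together with $|\ell\cap\Xset|=n$ gives $|\Nset_\ell|=\binom{n+2}{2}-n-|\Xset_\ell|$. One shows each node $A$ of $\ell$ that is \emph{not} one of the three intersection nodes must lie on... actually, the cleaner route is: take a node $A\in\Xset_\ell$; since the three maximal lines meet pairwise at three distinct nodes, $A$ avoids at least one $M_{i_0}$, so one can pass to $\Yset=\Xset\setminus M_{i_0}$, a $GC_{n-1}$ set in which $\ell$ is now an $(n-1)$-node line, and invoke Proposition~\ref{prp:n-1lnp} (via Corollary~\ref{ge}) to bound $|\Yset_\ell|$, hence (via $\Xset_\ell\subset\Yset_\ell\cup M_{i_0}$) to bound $|\Xset_\ell|$. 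Combined with Corollary~\ref{col:indep}(i) giving $|\Xset_\ell|\le\binom{n}{2}$ and with a lower bound on $|\Xset_\ell|$ obtained from analyzing $\Nset_\ell$ through Proposition~\ref{prp:3n-1} or Theorem~\ref{thm:3n} (the possible configurations forcing a maximal line or a maximal conic inside $\Nset_\ell$), one squeezes $|\Xset_\ell|$ to exactly one of the two values and, tracing back the configuration that produced that value, recovers the maximal line $M_0$ (resp.\ the pair $M',M''$) required in (i) (resp.\ (ii)). In other words, even in case (b) the structure of $\Nset_\ell$ produces a maximal line disjoint from $\ell$ off $\Xset$, or a concurrency, so the dichotomy~\eqref{2bin} and the accompanying structure always hold.

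Finally I would assemble the three cases: in every case $|\Xset_\ell|\in\{\binom{n}{2},\binom{n-1}{2}\}$, and the value determines the configuration — $\binom{n}{2}$ forcing the existence of a maximal $M_0$ with $M_0\cap\ell\notin\Xset$ and $\Xset_\ell=\Xset\setminus(\ell\cup M_0)$ (a $GC_{n-2}$ set), while $\binom{n-1}{2}$ forcing two maximal lines $M',M''$ concurrent with $\ell$ at a node and $\Xset_\ell=\Xset\setminus(\ell\cup M'\cup M'')$ (a $GC_{n-3}$ set). The base case $n=2$ is handled separately and trivially, and induction on $n$ is used implicitly through the reduction to $GC_{n-1}$ sets when passing to $\Xset\setminus M_{i_0}$. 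The anticipated sticking point is making the exclusion of the ``all three intersections distinct and nodal, with no concurrency'' subcase fully rigorous — keeping careful track of which nodes of $\ell$ and which of the three intersection points coincide, and ensuring the counting inequalities are tight enough to leave no third possibility for $|\Xset_\ell|$.
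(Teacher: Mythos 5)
Your reduction to the two Carnicer--Gasca lemmas is the right frame, and cases (a) and (c) are handled exactly as in the paper (note that case (a) is triggered in the paper by $|\Xset_\ell|\ge\binom{n-1}{2}+1$ via Proposition \ref{prp:linennp}(ii), which is cleaner than starting from the trichotomy of configurations). But your treatment of case (b) --- all three maximal lines meeting $\ell$ at three distinct nodes --- has a genuine gap. The counting machinery you invoke (Proposition \ref{prp:linennp}, Proposition \ref{prp:n-1lnp}, Corollary \ref{ge}, Proposition \ref{prp:3n-1}) only yields information when $|\Xset_\ell|$ is already large: it pins $|\Xset_\ell|$ to $\binom{n}{2}$ when $|\Xset_\ell|\ge\binom{n-1}{2}+1$ and to $\binom{n-1}{2}$ when $|\Xset_\ell|\ge\binom{n-2}{2}+2$, but it says nothing when $|\Xset_\ell|\le\binom{n-2}{2}+1$. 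There is no ``lower bound on $|\Xset_\ell|$ obtained from analyzing $\Nset_\ell$'' available at that point --- $\Nset_\ell$ being essentially $(n-1)$-dependent is consistent with $\Xset_\ell$ being very small (even a single node), so the squeeze you describe does not close. Ruling out the small values is precisely where the paper's real work lies, and none of its ingredients appear in your sketch: (1) the sub-case split on whether some maximal line $M$ satisfies $|M\cap\Xset_\ell|\ge 2$, in which case induction applied to $\Xset\setminus M$ identifies the conic $\beta$ of Proposition \ref{prp:linennp}(iii) as a product $M'M''$ of two maximal lines concurrent with $\ell$ at a node; (2) when every maximal line carries at most one node of $\Xset_\ell$, an argument via Remark \ref{rem:linengcn} and a three-concurrent-maximal-lines contradiction forcing $|M\cap\Xset_\ell|=1$ for all maximal lines; (3) the resulting bound $|(\Xset\setminus M_i)_\ell|=1$, which by the induction hypothesis forces $n\le 4$; and (4) a separate geometric elimination of the residual configuration $n=4$, $|\Xset_\ell|=2$. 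Also note that the two concurrent maximal lines $M',M''$ of conclusion (ii) need not be among your initial three $M_1,M_2,M_3$; they must be constructed, which your proposal does not do.

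A smaller point: your claim that $\Xset_\ell\subset\Yset_\ell\cup M_{i_0}$ with $\Yset=\Xset\setminus M_{i_0}$ combined with Corollary \ref{ge} gives only $|\Xset_\ell|\le\binom{n-2}{2}+|M_{i_0}\cap\Xset_\ell|\le\binom{n-2}{2}+(n-2)=\binom{n-1}{2}$, i.e.\ the correct upper bound in case (b), but an upper bound alone cannot establish the dichotomy \eqref{2bin}. The essential dependence of $\Nset_\ell$ is Proposition \ref{prp:Nq}, not Proposition \ref{prp:mc}. In summary: the easy two thirds of the argument are right, but the exclusion of intermediate values of $|\Xset_\ell|$ in case (b) --- the heart of the theorem --- is not achieved by the tools you name.
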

Let us first assume that Theorem is valid and prove the following
\begin{corollary}\label{cor:linengcn}
Assume that the conditions of Theorem \ref{thm:linengcn} take place. Then the following hold for any maximal line $M$ of $\Xset$:
\begin{enumerate}
\item
$|M\cap \Xset_{\ell}| = 0$ if

a) $M \cap {\ell} \notin \Xset$  or if

b) there is another maximal line $M'$ such that $M\cap M'\cap {\ell} \in \Xset;$

\item
$|M\cap \Xset_{\ell}| = s-1\ $ if $\ |\Xset_{\ell}|=\binom{s}{2},$ where
$s=n,n-1,$ \\ for all the remaining maximal lines.
\end{enumerate}
\end{corollary}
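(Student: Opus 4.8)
The plan is to read off both parts directly from Theorem \ref{thm:linengcn} and Lemmas \ref{lem:CG1}, \ref{lem:CG2}, using the two elementary facts about maximal lines recalled in Section \ref{sec:intro}: any two maximal lines of $\Xset$ meet at a node of $\Xset$, and no three maximal lines meet at a single node. It is convenient to note at the outset that no node lying on ${\ell}$ uses ${\ell}$ (its fundamental polynomial cannot vanish at itself), so $\Xset_{\ell}\cap{\ell}=\emptyset$, and that a node of a line $M$ lying on another line ${\ell}'$ is necessarily the point $M\cap{\ell}'$.

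For part (i) there is nothing to do beyond quoting the lemmas. In case (a), where $M\cap{\ell}\notin\Xset$, Lemma \ref{lem:CG1} applied with $M_0:=M$ gives $\Xset_{\ell}=(\Xset\setminus M)_{\ell}$, so no node of $M$ uses ${\ell}$, i.e. $M\cap\Xset_{\ell}=\emptyset$. In case (b), where another maximal line $M'$ satisfies $M\cap M'\cap{\ell}\in\Xset$, Lemma \ref{lem:CG2} applied with the pair $M,M'$ gives $\Xset_{\ell}=(\Xset\setminus(M\cup M'))_{\ell}$, whence again $M\cap\Xset_{\ell}=\emptyset$.

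For part (ii) I would invoke Theorem \ref{thm:linengcn} to split into the two admissible configurations. If $|\Xset_{\ell}|=\binom{n}{2}$, then $\Xset_{\ell}=\Xset\setminus({\ell}\cup M_0)$ with $M_0\cap{\ell}\notin\Xset$; a ``remaining'' maximal line $M$ is then distinct from $M_0$ (which is of type (a)) and satisfies $M\cap{\ell}\in\Xset$, and moreover $M\cap M_0$, being a node of $\Xset$, cannot lie on ${\ell}$ since $M_0\cap{\ell}\notin\Xset$, so $M\cap M_0\neq M\cap{\ell}$; thus exactly two of the $n+1$ nodes of $M$ lie in ${\ell}\cup M_0$, and the other $n-1$ lie in $\Xset_{\ell}$, giving $|M\cap\Xset_{\ell}|=n-1=s-1$. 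If instead $|\Xset_{\ell}|=\binom{n-1}{2}$, then $\Xset_{\ell}=\Xset\setminus({\ell}\cup M'\cup M'')$ with $M'\cap M''\cap{\ell}\in\Xset$; a remaining maximal line $M$ then avoids being $M'$ or $M''$ (both of type (b)) and has $M\cap{\ell}\in\Xset$, and the nodes $M\cap{\ell}$, $M\cap M'$, $M\cap M''$ are pairwise distinct --- equality of $M\cap{\ell}$ with $M\cap M'$ or $M\cap M''$ would make $M$ of type (b), while $M\cap M'=M\cap M''$ would force three maximal lines through one node --- so exactly three of the $n+1$ nodes of $M$ lie in ${\ell}\cup M'\cup M''$ and the remaining $n-2=s-1$ lie in $\Xset_{\ell}$.

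The whole argument is bookkeeping once Theorem \ref{thm:linengcn} is in hand; the only place demanding attention is verifying that the intersection points $M\cap{\ell}$, $M\cap M_0$ (respectively $M\cap{\ell}$, $M\cap M'$, $M\cap M''$) are genuinely distinct nodes, which is exactly where the two properties of maximal lines and the hypothesis that $M$ is not of type (a) or (b) are used.
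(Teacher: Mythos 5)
Your proof is correct and follows essentially the same route as the paper: part (i) is read off from Lemmas \ref{lem:CG1} and \ref{lem:CG2}, and part (ii) from the explicit description of $\Xset_{\ell}$ in Theorem \ref{thm:linengcn} plus a count of how many nodes of $M$ fall into ${\ell}\cup M_0$ (resp.\ ${\ell}\cup M'\cup M''$). The only difference is that you spell out the distinctness of the intersection nodes (using that $M$ is not of type (a) or (b) and that three maximal lines cannot concur), which the paper asserts without elaboration.
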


\begin{proof}[Proof of Corollary \ref{cor:linengcn}]
The statements of i) concerning a) and b) follow from Lemma \ref{lem:CG1} and Lemma\ref{lem:CG2}, respectively.

For the statement ii) assume that $M$ is a maximal line intersecting
${\ell}$ at a node $A$ and there is no other maximal line passing through
that node.

\noindent Now suppose that $|\Xset_{\ell}| = \binom{n}{2}.$ Then, in view of Theorem \ref{thm:linengcn}, there is a maximal line $M_0$ such that $M_0 \cap {\ell} \notin \Xset.$
According to Lemma \ref{lem:CG1} we have that
$\Xset_{\ell} = \Xset \setminus ({\ell} \cup M_0).$
Therefore we get
$|M\cap \Xset_{\ell}| = |M\cap [\Xset\setminus ({\ell} \cup M_0)]| = (n+1)-2=n-1,$
since $M$ intersects ${\ell}$ and $M_0$ at two distinct nodes.

Next suppose that $|\Xset_{\ell}| = \binom{n-1}{2}.$ Then there are two
maximal lines $M'$ and $M''$ such that $M'\cap M''\cap {\ell} \in \Xset.$
Now, according to Lemma \ref{lem:CG2}, we have that $\Xset_{\ell} = \Xset
\setminus ({\ell} \cup M'\cup M'').$ Therefore we get $|M\cap \Xset_{\ell}| =
|M\cap [\Xset\setminus ({\ell} \cup M'\cup M'')]| = (n+1)-3=n-2,$ since
$M$ intersects ${\ell}, M'$ and $M''$ at three distinct nodes.
\end{proof}

\begin{remark}\label{rem:linengcn}
Assume that the conditions of Theorem \ref{thm:linengcn} take place
and $\Xset_{\ell}\neq\emptyset.$ Assume also that $M$ is a maximal line
of $\Xset$ such that $M$ intersects ${\ell}$ at a node and no node from
$M$ uses ${\ell}.$ Then there is another maximal line $M'$ such that
$M\cap M'\cap {\ell} \in \Xset$ and therefore no node from $M'$ uses ${\ell}$ either.
\end{remark}

\subsection{The proof of Theorem \ref{thm:linengcn}\label{s:pnlgcn}}

Let us start with
\subsubsection{The case $n=1$\label{ss:case1}} $GC_1$
sets consist of $3$ non-collinear nodes. Consider a such set
$\Xset=\left\{A,B,C\right\}$ and an $1$-node line ${\ell}$ that passes,
say, through $A.$ We have that no $1$-node line is used in $GC_n$
sets. Thus $\Xset_{\ell}=\emptyset.$ Therefore we may assume that both
equalities in \eqref{2bin} take place. Note also that both
implications i) and ii) of Theorem \ref{thm:linengcn} take place.
Indeed, the maximal line through $B$ and $C$ does not intersect ${\ell}$
at a node. And at the same time the other two maximal lines, i.e.,
$2$-node lines through $A, B$ and $A, C$ intersect the line ${\ell}$ at
the node $A.$

\subsubsection{The case $n=2$\label{ss:case2}}

We divide this case into $2$ parts.

\vspace{.2cm} \noindent\emph{ 1. $GC_2$ sets with $3$ maximal
lines:}\vspace{.2cm}

Consider a $GC_2$ set $\Xset$ with exactly $3$ maximal lines. These
lines intersect each other at $3$ non-collinear nodes, called
vertices. Except these $3$ nodes, there are $3$ more (non-collinear)
nodes in $\Xset$, one in each maximal line, called "free" nodes.
Here the $2$-node lines are of $2$ types:

a) $2$-node line ${\ell}$ that does not pass through a vertex. Notice
that ${\ell}$ is used only by one node and the implication i) of Theorem
holds. Namely, there is a maximal line that does not intersect ${\ell}$
at a node.

b) $2$-node line that passes through a vertex. Notice that no node
uses a such line and the implication ii) holds.

\vspace{.2cm} \noindent\emph{ 2. $GC_2$ sets with $4$ maximal
lines:}\vspace{.2cm}

In this case we have the Chung-Yao lattice (see Subsection
\ref{ss:CY}). Here all $6$ nodes of $\Xset$ are intersection nodes
of the maximal lines and the only used lines are the maximal lines.
Thus in this case any $2$-node line is not used and evidently the
implication ii) holds.

\subsubsection{The case $n=3$\label{ss:case3}}

We divide this case into $3$ parts:

\vspace{.2cm} \noindent\emph{ 1. The case of $GC_3$ sets with
exactly $3$ maximal lines:}\vspace{.2cm}

Consider a $GC_3$ set $\Xset$ with exactly $3$ maximal lines.  By
the properties of maximal lines we have that they form a triangle
and the vertices are nodes of $\Xset.$  There are $6\ (=3\times 2)$
more nodes, called "free", $2$ in each maximal line. There
is also one node outside the maximal lines, denoted by $O.$ We find
readily that the $6$ "free" nodes are located in $3$ lines passing
through $O,$ $2$ in each line (see Fig. \ref{pic2}).

\begin{figure}[ht] 
\centering
\includegraphics[scale=0.5]{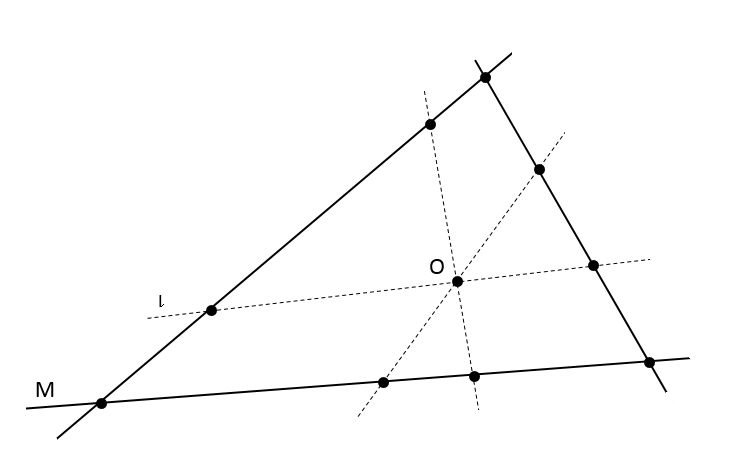}
\caption{Three $3$-node lines}\label{pic2}
\end{figure}

These $3$ lines are the only $3$-node lines in this case. We have
that for a such line ${\ell}$ there is a maximal line $M$ that does not
intersect ${\ell}$ at a node, i.e., the implication i) of Theorem holds.
Also we have that ${\ell}$ is used  by exactly $3$ nodes. Namely, by the
nodes that do not belong to ${\ell}\cup M.$

\vspace{.2cm} \noindent\emph {2. The case of $GC_3$ sets with
exactly $4$ maximal lines: }\vspace{.2cm}

Now consider a $GC_3$ set $\Xset$ with exactly $4$ maximal lines. In
this case there are $6\ \left(=\binom{4}{2}\right)$ nodes that are
intersection points of maximal lines. Also there are $4$ more nodes
in maximal lines, called "free", $1$ in each. The $4$ "free" nodes
are not collinear.

Again we have two types of $3$-node lines here.

a) $3$-node line ${\ell}$ that passes through an intersection node (see
Fig \ref{pic5}).
\begin{figure}[ht] 
\centering
\includegraphics[scale=0.5]{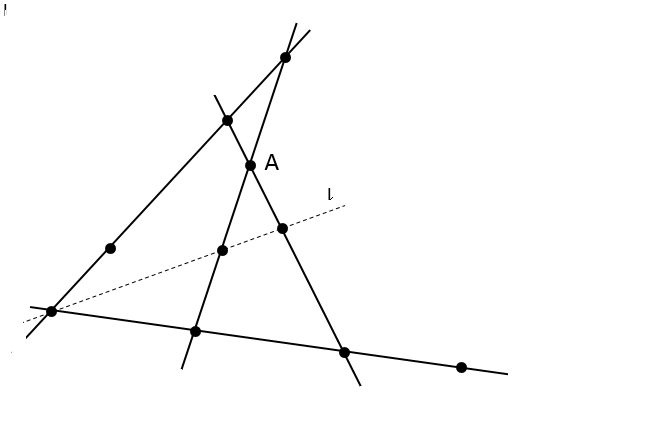}
\caption{$3$-node line passing through an intersection
node}\label{pic5}
\end{figure}

Note that a $3$-node line can pass through at most one such node. Indeed, if a line passes through two intersection nodes then it
cannot pass through any third node.

Notice that ${\ell}$ is used by only one node $A$ and the implication ii) of Theorem takes place.

b) $3$-node line ${\ell}$ that passes through $3$ "free" nodes (see Fig
\ref{pic3}).
\begin{figure}[ht] 
\centering
\includegraphics[scale=0.5]{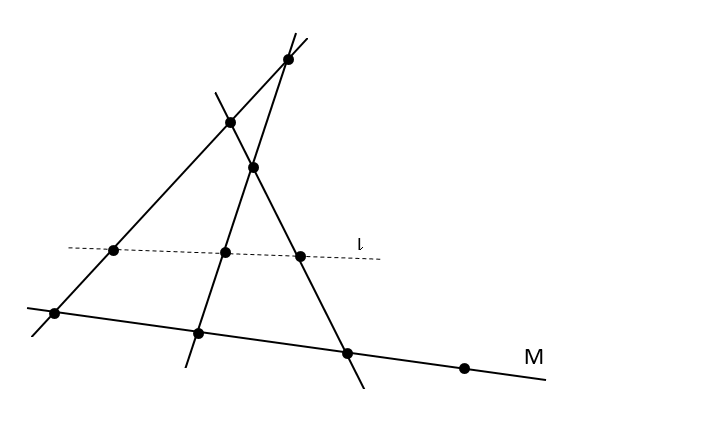}
\caption{$3$-node line through $3$ "free" nodes}\label{pic3}
\end{figure}

Notice that the maximal line $M$ whose "free" node is not lying in
${\ell}$ does not intersect ${\ell}$ at a node. Thus the implication i) holds.
In this case ${\ell}$ is used by exactly $3$ nodes. Namely, by the nodes
that do not belong to ${\ell}\cup M.$

\vspace{.2cm}

 \noindent \emph {3. The case of $GC_3$ sets with exactly
$5$ maximal lines:} \vspace{.2cm}

 In this case we have the Chung-Yao
lattice (see Subsection \ref{ss:CY}). Here all $10$ nodes of $\Xset$
are intersection nodes of $5$ maximal lines and the only used lines
are the maximal lines. Let us verify that in this case there is no
$3$-node line. Assume conversely that $\ell$ is a such line. Then
through each node there pass two maximal lines and all these maximal
lines are distinct. Therefore we get $6$ maximal lines, which is a
contradiction.

\subsection{The proof of Theorem \ref{thm:linengcn} for $n\ge 4$ \label{ss:casenge4}}

We will prove Theorem by induction on $n.$ The cases $n\le 3$ were verified.
Assume Theorem is true for all degrees less $n$ and let us prove that it is true for the degree $n,$ where $n\ge 4.$

Suppose that $|\Xset_{\ell}| \ge \binom{n-1}{2}+1.$ Then by assertion ii) of Proposition \ref{prp:linennp} we get that
there is a maximal line $M_0$ such that $M_0 \cap {\ell} \notin \Xset.$ Thus, in view of Lemma \ref{lem:CG1}, we obtain
that $|\Xset_{\ell}|=\binom{n}{2}$ and the implication i) holds.

Thus to prove Theorem it suffices to assume that

\begin{equation} \label{a}
|\Xset_{\ell}| \le \binom{n-1}{2}
\end{equation}
and to prove that the implication ii) holds, i.e., there are two maximal lines $M', M'',$ such that $M' \cap M'' \cap {\ell} \in \Xset.$ Indeed, this completes the proof in view of Lemma \ref{lem:CG2}.

First suppose that two nodes in some maximal line $M$ use the line ${\ell},$ i.e.,
\begin{equation} \label{b}
|M\cap \Xset_{\ell}|\ge 2.
\end{equation}
We have that $\Xset\setminus M$ is a $GC_{n-1}$-set. Hence, by making use of \eqref{b} and induction hypothesis, we obtain that
$$|\Xset_{\ell}| \ge |(\Xset\setminus M)_{\ell}|+2 \ge \binom{n-2}{2}+2.$$
Therefore, in view of the condition \eqref{a} and Proposition \ref{prp:linennp} iii), we conclude that
\begin{equation} \label{b'}
|\Xset_{\ell}|=\binom{n-1}{2}\ \hbox{and}\ \Nset_{\ell} \subset \beta\in \Pi_2, \ |\Nset_{\ell}|=2n.
\end{equation}
Let us use the induction hypothesis. By taking into account the first equality above and condition \eqref{b}, we obtain that
$$|(\Xset\setminus M)_{\ell}| =\binom{n-2}{2}.$$
Thus the cardinality of the set $\Nset_{\ell} \cap (\Xset\setminus M)$ equals to $2n-2\ (=2(n-1)).$
Therefore, in view of the second equality in \eqref{b'}, by using induction hypothesis we get that all the nodes in $\beta$ except possibly two are located on
two maximal lines of the set $\Xset\setminus M,$ denoted by $M'$ and $M'',$ which intersect at a node $A\in {\ell}.$
Since $n\ge 4$ each of these two maximal lines passes through at least $3$ nodes except $A,$ which belong to $\beta.$
Thus each of them divides $\beta$ and we get $\beta=M'M''.$ Finally, according to Proposition \ref{prp:linennp} iii), each of these lines passes through exactly $n$ nodes outside ${\ell}$ and therefore they are maximal also for the set $\Xset.$ Hence the implication ii) holds.

Thus we may suppose that
\begin{equation} \label{c}
|M\cap \Xset_{\ell}|\le 1 \quad \hbox{for each maximal line $M$ of the set
$\Xset.$ }
\end{equation}

Next let us verify that we may suppose that
\begin{equation} \label{e}
|M\cap \Xset_{\ell}| =1\quad \hbox{for each maximal line $M$ of the set
$\Xset.$ }
\end{equation}

Indeed, suppose by way of contradiction that no node, say in a maximal line $M_1$ uses the line
${\ell}.$
Now, in view of Theorem \ref{thm:CG}, consider two other maximal lines of $\Xset$ and denote them by $M_i,\ i=2,3.$

In view of the condition \eqref{a} and Lemma \ref{lem:CG1} we have that there is no maximal line $M_0$ such that $M_0\cap {\ell}\notin \Xset,$ i.e., all the maximal lines of $\Xset$ intersect the line ${\ell}$ at a node of $\Xset.$ Then as was mentioned above, if there are two maximal lines
intersecting at a node in ${\ell}$ then Theorem follows from Lemma \ref{lem:CG2}.

Thus, we may suppose that the $3$ maximal lines $M_i,\ i=1,2,3,$ intersect the line ${\ell}$ at $3$ distinct nodes, denoted by $C_i,\ i=1,2,3,$ respectively.

Then
consider the $GC_{n-1}$-set $\Xset_2:= \Xset\setminus M_2.$ We may
assume that $(\Xset_2)_{\ell}\neq\emptyset.$ Indeed, otherwise by
induction hypothesis and \eqref{2bin} we would obtain that $n-1=2,$ i.e, $n=3.$ In
$\Xset_2$ no node of the maximal line $M_1$ uses ${\ell}.$ By induction
hypothesis, in view of Remark \ref{rem:linengcn}, we have that there
is a maximal line $M'_1$ of this set intersecting ${\ell}$ at $C_1.$ In the same
way we get that there is a maximal line $M''_1$ in the set
$\Xset\setminus M_3$ intersecting ${\ell}$ at $C_1.$ Now if the maximal
line $M'_1$ coincides with $M''_1$ then we get readily that it is
maximal also for $\Xset$ which completes the proof in view of Lemma
\ref{lem:CG2}. Thus suppose that the maximal lines $M'_1$ and $M''_1$
are distinct. Then consider the $GC_{n-2}$-set $\Xset\setminus
(M_2\cup M_3).$ Here we have $3$ maximal lines $M_1, M'_1$ and
$M''_1$ intersecting at the node $C_1,$ which is a contradiction.

\noindent Thus we have that \eqref{e} holds, i.e., there is only one node in each maximal line $M_i,\ i=1,2,3,$ using the line ${\ell}.$
Notice that at most one node can be intersection node of these $3$ maximal lines, since otherwise we would have $2$ nodes in a maximal line that use ${\ell}.$ Consider a node $A$ which lies, say in $M_3,$
uses ${\ell}$ and is not an intersection node, i.e., does not lie in the maximal lines $M_1$ and $M_2$ (see Fig. \ref{pic4}).

Consider the $GC_{n-1}$ node set $\Xset_i:=\Xset\setminus M_i$ for any fixed $i=1,2.$ In the maximal line $M_3$
there is only one node using ${\ell}.$ Therefore, in view of the induction hypothesis and Corollary \ref{cor:linengcn}, we have that
\begin{equation}\label{d}
|(\Xset_i)_{\ell}| = 1,\ i=1,2.
\end{equation}
We may conclude from here that there is only one node in $M_1\cup M_2,$ namely the intersection node $B:=M_1\cap M_2,$ that uses the line ${\ell}.$

At the same time we get from \eqref{d} also that $(n-1)=2,$ or $(n-1)-1=2.$
Therefore $n\le 4,$ i.e., we may assume that $n=4.$

\subsubsection{A special case}

Thus it remains to consider the case $n=4$ with $|\Xset_{\ell}| = 2.$
Recall that one of the nodes: $A$ belongs to only one maximal line
$M_3.$ While the other node: $B$ is the intersection node of the
maximal lines $M_1$ and $M_2$ (see Fig. \ref{pic4}).
\begin{figure}[ht] 
\centering
\includegraphics[scale=0.5]{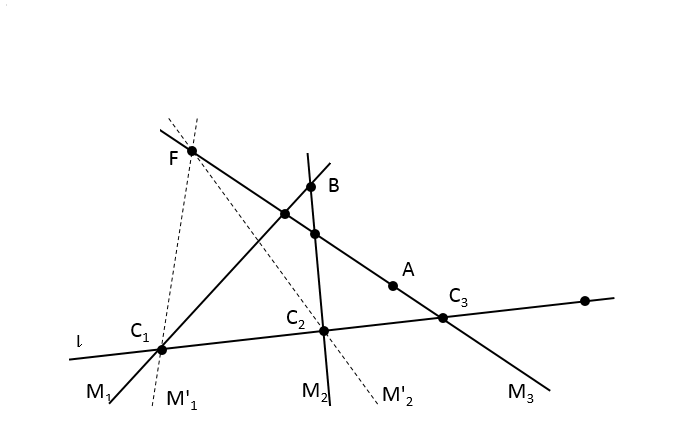}
\caption{A special case}\label{pic4}
\end{figure}
We will show that this case is not possible.

Consider the $GC_3$-set $\Xset_1:=\Xset\setminus M_1.$ The line ${\ell}$
is used by one node here: $A$ and no node in maximal line $M_2$
uses it. Thus we conclude
that there is a maximal $M_2'$ passing through $C_2.$

Now, denote by $E,$ the intersection node of the maximal lines
$M_2'$ and $M_3.$ Let us identify this node among the $5$ nodes in
$M_3.$ Notice that evidently $E$ is different from $C_3$ - the intersection node with  ${\ell}.$

We have that $E$ is different also from the intersection nodes with  $M_1$ or with $M_2.$ Indeed, three
maximal lines cannot intersect at a node.

Finally note that $E$ is different also from the node $A,$ since it uses ${\ell}$ and therefore it
does not belong to $M_2'.$

Thus $E$ coincides necessarily with the fifth node in $M_3$ denoted by $F.$

Now consider the $GC_3$-set $\Xset_2:=\Xset\setminus M_2.$ Again the line ${\ell}$
is used by one node here: $A$ and no node in maximal line $M_1$
uses it. Thus we conclude
that there is a maximal $M_1'$ passing through $C_1.$

Then, exactly in the same way as above, we may conclude that $M_1'$
intersects $M_3$ at $F.$

Finally, consider the $GC_2$-set $\Yset:=\Xset\setminus (M_1\cup M_2).$
Notice that the lines $M_1', M_2'$ and $M_3$ are $3$ maximal lines intersecting at the node $F,$
which is a contradiction. $\square$

\begin{remark}\label{rem:nle5}
Let us mention that in the cases $n\le 5$ Theorem \ref{thm:linengcn}
is valid without the assumption concerning the Gasca-Maeztu
conjecture.
\end{remark}


\subsection{A conjecture concerning  $GC_n$ sets \label{s:conj}}
\begin{conjecture}\label{mainc}
Assume that Conjecture \ref{conj:GM} holds for all degrees up to $\nu$. Let $\Xset$ be a $GC_n$ set, $n \leq \nu$ and
${\ell}$ be a line passing through exactly $k$ nodes of $\Xset$ set. Then we have that
\begin{equation} \label{1bin} |\Xset_{\ell}| = \binom{s}{2},\ \hbox{for some} \ 2k-n-1\le s \le k.
\end{equation}
Moreover, for any maximal line $M$ of $\Xset$ we have:
\begin{enumerate}
\item
$|M\cap \Xset_{\ell}| = 0$ if

$M \cap {\ell} \notin \Xset$  or if

there is another maximal line $M'$ such that $M\cap M'\cap {\ell} \in \Xset;$

\item
$|M\cap \Xset_{\ell}| = s-1\ $
if $\ \binom{s}{2}=|\Xset_{\ell}|, \ \ \hbox{where}\ \ 2k-n-1\le s \le k,$\\
for all the remaining maximal lines.
\end{enumerate}
\end{conjecture}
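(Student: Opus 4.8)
The plan is to prove Conjecture \ref{mainc} by induction on $n$, handling all values of $k$ simultaneously and taking the already-settled cases $k\le 1$ (no node uses $\ell$, so $|\Xset_{\ell}|=0=\binom{1}{2}$), $k=n$ (Theorem \ref{thm:linengcn} together with Corollary \ref{cor:linengcn}), and $k=n+1$ ($\ell$ maximal, so $\Xset_{\ell}=\Xset\setminus\ell$ and $|\Xset_{\ell}|=\binom{n+1}{2}$) as the base of the recursion; the substantive range is then $2\le k\le n-1$, with the small-degree instances covered by \eqref{2nodel} and the explicit analysis of Section \ref{s:pnlgcn}. The first observation to record is that, for any maximal line $M\neq\ell$ of a $GC_n$ set $\Xset$ and any node $A\in\Xset\setminus M$, one has $p^\star_{A,\Xset}=M\,p^\star_{A,\Xset\setminus M}$, so by unique factorization $A$ uses $\ell$ in $\Xset$ if and only if $A$ uses $\ell$ in the $GC_{n-1}$ set $\Xset\setminus M$; hence, since no node lying on $\ell$ can use $\ell$,
\begin{equation}\label{eq:split}
|\Xset_{\ell}| = \bigl|(\Xset\setminus M)_{\ell}\bigr| + |M\cap\Xset_{\ell}|,\qquad |M\cap\Xset_{\ell}|=|(M\setminus\ell)\cap\Xset_{\ell}|.
\end{equation}

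Fix, by Theorem \ref{thm:CG}, three maximal lines of $\Xset$ and split into cases as in the proof of Theorem \ref{thm:linengcn}. In \emph{Case A} there is a maximal line $M_0$ with $M_0\cap\ell\notin\Xset$; then Lemma \ref{lem:CG1} gives $\Xset_{\ell}=(\Xset\setminus M_0)_{\ell}$, the line $\ell$ is still a $k$-node line of the $GC_{n-1}$ set $\Xset\setminus M_0$, and the induction hypothesis yields $|\Xset_{\ell}|=\binom{s}{2}$ with $2k-(n-1)-1\le s\le k$, inside the required range. In \emph{Case B} there are two maximal lines $M',M''$ with $M'\cap M''\cap\ell\in\Xset$; then Lemma \ref{lem:CG2} gives $\Xset_{\ell}=(\Xset\setminus(M'\cup M''))_{\ell}$, $\ell$ is a $(k-1)$-node line of the $GC_{n-2}$ set $\Xset\setminus(M'\cup M'')$, and induction yields $|\Xset_{\ell}|=\binom{s}{2}$ with $2(k-1)-(n-2)-1=2k-n-1\le s\le k-1$. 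In both cases assertions (i)--(ii) for the maximal lines of $\Xset$ are transported to the smaller $GC$ set: $M_0$ (resp.\ $M',M''$) falls under (i) — as in the proofs of Lemmas \ref{lem:CG1}, \ref{lem:CG2}, no node of those lines uses $\ell$ — while any other maximal line $M$ becomes, after deletion, an $n$-node line (resp.\ a maximal line) of the smaller set, to which the induction hypothesis and \eqref{eq:split} apply; one must also check that the deletion creates no new maximal line through $M\cap\ell$, which is where Proposition \ref{prp:linennp} and the non-concurrence of maximal lines enter.

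The crux is \emph{Case C}: every maximal line of $\Xset$ meets $\ell$ at a node of $\Xset$, and no two of them meet at a common node on $\ell$, so the (at least three) maximal lines hit $\ell$ in distinct nodes. For any maximal line $M$, \eqref{eq:split} reads $|\Xset_{\ell}|=\binom{s'_M}{2}+|M\cap\Xset_{\ell}|$, where by the induction hypothesis applied to the $(k-1)$-node line $\ell$ of the $GC_{n-1}$ set $\Xset\setminus M$ we have $|(\Xset\setminus M)_{\ell}|=\binom{s'_M}{2}$ with $2k-n-2\le s'_M\le k-1$. The target is the identity $|M\cap\Xset_{\ell}|=s'_M$ for \emph{every} maximal line $M$: granting it, $|\Xset_{\ell}|=\binom{s'_M}{2}+s'_M=\binom{s'_M+1}{2}$, and since the left side is independent of $M$, the integer $s'_M$ is a common value $s-1$, giving $|\Xset_{\ell}|=\binom{s}{2}$ with $2k-n-1\le s\le k$ and, at the same time, part (ii). To prove $|M\cap\Xset_{\ell}|=s'_M$ one bounds this count above using that $\Xset_{\ell}$ is $(k-2)$-independent (Corollary \ref{col:indep}), that $(\Xset\setminus M)_{\ell}$ is $(k-3)$-independent (Proposition \ref{prp:linekgcn}), and the interplay of the three maximal lines, and below by exhibiting, for each node of $M\setminus\ell$ outside a small exceptional set, a factor $\ell$ in its fundamental polynomial by running the Carnicer--Gasca lemmas inside the sets $\Xset\setminus M_i$.

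I expect Case C, and within it the exact equality $|M\cap\Xset_{\ell}|=s'_M$, to be the genuine obstacle. In the instance $k=n$ this is precisely what forced the ad hoc exclusion of the ``special case $n=4,\ |\Xset_{\ell}|=2$'' in Section \ref{s:pnlgcn}; for general $k$ a whole family of analogous degenerate sub-configurations must be eliminated. Doing this uniformly will likely require a sharper structural description of $\Nset_{\ell}$ — presumably showing, when $|\Xset_{\ell}|$ sits at the low end $\binom{2k-n-1}{2}$ of its range, that $\Nset_{\ell}$ lies on a maximal curve of controlled degree in the spirit of Propositions \ref{prp:linennp} and \ref{prp:n-1lnp}, and then iterating a decomposition $\Xset_{\ell}=\Xset\setminus(\ell\cup\mathcal C)$ with $\mathcal C$ a union of maximal lines. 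Making this argument work for all $k$ and $n$ at once, without an ever-growing list of exceptional low-degree cases, is where the difficulty lies.
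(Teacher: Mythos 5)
The statement you are proving is Conjecture \ref{mainc}: the paper itself offers no proof of it, but poses it as an open problem generalizing Theorem \ref{thm:linengcn} (which is the case $k=n$, supplemented by Corollary \ref{cor:linengcn}). So there is no proof in the paper to compare against, and the question is simply whether your argument closes the conjecture. It does not. Your reduction in Cases A and B is correct and is exactly the reduction the paper runs for $k=n$ in Subsection \ref{ss:casenge4}: Lemma \ref{lem:CG1} (resp.\ Lemma \ref{lem:CG2}) passes to the $GC_{n-1}$ set $\Xset\setminus M_0$ (resp.\ the $GC_{n-2}$ set $\Xset\setminus(M'\cup M'')$) without changing $\Xset_{\ell}$, and the induction hypothesis lands $s$ in the required window. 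Your identity \eqref{eq:split} is also sound. But Case C --- all maximal lines meeting $\ell$ at pairwise distinct nodes of $\Xset$ --- is where the entire content of the conjecture lives, and there you only state the target identity $|M\cap\Xset_{\ell}|=s'_M$ and gesture at upper and lower bounds. That identity is, in substance, assertion (ii) of the conjecture at level $n$; it is not supplied by the induction hypothesis, so the argument is circular at its core unless an independent proof of it is given. You acknowledge this yourself, which means the proposal is a framing of the induction, not a proof.

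The reason the gap cannot be closed with the paper's toolkit is quantitative. For $k=n$ the paper forces the dichotomy \eqref{2bin} via Proposition \ref{prp:linennp}, whose engine is the classification of $(n-1)$-dependent sets of at most $3n-1$ nodes (Propositions \ref{prp:2n+1}, \ref{prp:3n-1}, Theorem \ref{thm:3n}) applied to $\Nset_{\ell}$, and even then an exceptional configuration at $n=4$ has to be killed by hand. For general $k$, if $|\Xset_{\ell}|=\binom{s}{2}$ with $s$ near the lower end $2k-n-1$, then $|\Nset_{\ell}|=\binom{n+2}{2}-k-\binom{s}{2}$ can be of order $n^2$, far beyond the $3n$-node threshold of Theorem \ref{thm:3n}; no classification of essentially $(n-1)$-dependent sets of that size exists, so the ``sharper structural description of $\Nset_{\ell}$'' you invoke is precisely the missing ingredient, not a routine extension. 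Two smaller issues are also left open: the transport of clauses (i)--(ii) back from $\Xset\setminus M_0$ requires ruling out new maximal lines of the smaller set passing through $M\cap\ell$ (you flag this but do not do it), and the expected proliferation of low-degree exceptional configurations (the analogues of the special case $n=4$, $|\Xset_{\ell}|=2$) is not addressed. In short: Cases A and B are fine, Case C is the conjecture, and it remains open.
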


\vspace{3mm}

\noindent Vahagn Bayramyan, Hakop Hakopian \vspace{2mm}

\noindent{Department of Informatics and Applied Mathematics\\
Yerevan State University\\
A. Manukyan St. 1\\
0025 Yerevan, Armenia}

\vspace{1mm}

\noindent E-mails: vahagn.bayramyan@gmail.com, hakop@ysu.am


\begin{thebibliography}{99}

\bibitem{B15}
V.~Bayramyan, {On the usage of $2$-node lines in $n$-poised sets},
International Conference Harmonic analysis and approximations, 12--18 september, 2015, Tsaghkadzor, Armenia,
Abstracts.

\bibitem{BHT}
V.~Bayramyan, H.~Hakopian and S.~Toroyan, {A simple proof of the
Gasca-Maeztu conjecture for n=4}, accepted in Ja\'en J.\ Approx.
{\bf 7}(1) (2015), 137–-147.

\bibitem{BHT15}
V.~Bayramyan, H.~Hakopian and S.~Toroyan,
On the uniqueness of algebraic curves,
Proc. of YSU, Phys. Math. Sci., {\bf 1} (2015), 3--7.


\bibitem{B14}
L.~Berzolari, {Sulla determinazione d'una curva o d'una superficie
algebrica e su alcune questioni di postulazione}, Ist.\ Lomb.\
Rend.~(II.~Ser.) {\bf 47} (1914) 556--564.

\bibitem{dB07}
C.~de Boor,
{Multivariate polynomial interpolation: conjectures concerning
GC-sets},
Numer.\ Algorithms {\bf 45} (2007) 113--125.

\bibitem{B90}
J.~R.~Busch,
{A note on Lagrange interpolation in $\mathbb{R}^2$},
Rev.\ Un.\ Mat.\ Argentina {\bf 36} (1990) 33--38.

\bibitem{CG00}
J.~M.~Carnicer and M.~Gasca,
{Planar configurations with simple Lagrange interpolation formulae},
\textit{in:} T.~Lyche and L.~L.~Schumaker (eds.),
Mathematical Methods in Curves and Surfaces: Oslo 2000,
Vanderbilt University Press, Nashville, 2001, pp.~55--62.

\bibitem{CG01}
J.~M.~Carnicer and M.~Gasca,
{A conjecture on multivariate polynomial interpolation},
Rev.\ R.~Acad.\ Cienc.\ Exactas F{\'i}s.\ Nat.\ (Esp.), Ser.~A
Mat.\ {\bf 95} (2001) 145--153.

\bibitem{CG03}
J.~M.~Carnicer and M.~Gasca,
{On Chung and Yao's geometric characterization for bivariate polynomial
interpolation},
\textit{in:} T.~Lyche, M.-L.~Mazure, and L.~L.~Schumaker (eds.),
Curve and Surface Design: Saint Malo 2002,
Nashboro Press, Brentwood, 2003, pp.~21--30.

\bibitem{CY77}
K.~C.~Chung and T.~H.~Yao,
{On lattices admitting unique Lagrange interpolations},
SIAM J.\ Numer.\ Anal.\ {\bf 14} (1977) 735--743.

\bibitem{EGH}
D. Eisenbud, M. Green and J. Harris,
{Ceyley-Bacharach theorems and conjectures}, Bull. Amer. Math. Soc. (N.S.) {\bf 33}(3), 295-324.

\bibitem{GM82}
M.~Gasca and J.~I.~Maeztu,
{On Lagrange and Hermite interpolation in $\mathbb{R}^k$},
Numer.\ Math.\ {\bf 39} (1982) 1--14.

\bibitem{H00}
H.~Hakopian,
{On a class of Hermite interpolation problems},
Adv.\ Comput.\ Math.\ {\bf 12} (2000) 303--309.

\bibitem{HJZ09a}
H.~Hakopian, K.~Jetter, and G.~Zimmermann,
{Vandermonde matrices for intersection points of curves},
Ja\'en J.\ Approx.\ {\bf 1} (2009) 67--81.

\bibitem{HJZ09b}
H.~Hakopian, K.~Jetter, and G.~Zimmermann,
{A new proof of the Gasca-Maeztu conjecture for $n=4$},
J.\ Approx.\ Theory {\bf 159} (2009) 224--242.

\bibitem{HJZ14}
H. Hakopian, K. Jetter and G. Zimmermann, The Gasca-Maeztu
conjecture for $n=5$,  {Numer. Math.  {\bf 127} (2014) 685--713.}

\bibitem{HM12}
H.~Hakopian and A.~Malinyan,
{Characterization of $n$-independent sets of $\le 3n$ points},
Ja\'en J.\ Approx. {\bf 4} (2012) 119--134.

\bibitem{HR}
H.~Hakopian and L.~Rafayelyan,
{On a generalization of Gasca-Maeztu conjecture},
New York J. Math. {\bf 21} (2015) 351--367.


\bibitem{HT1}
H. Hakopian and S. Toroyan,
On the minimal number of nodes determining
uniquelly algebraic curves, Proc. of YSU, Phys. Math. Sci.,
{\bf 3} (2015) 17--22.


\bibitem{HT2}
H. Hakopian, S. Toroyan,
On the uniqueness of algebraic curves passing through n-independent nodes,
arXiv:1510.05211v1[math.NA].


\bibitem{J83}
K.~Jetter,
{Some contributions to bivariate interpolation and cubature},
\textit{in:} C.~K.~Chui, L.~L.~Schumaker and J.~D.~Ward (eds),
Approximation Theory IV, Acad.\ Press, New York, 1983, pp.~533--538.

\bibitem{R48}
J.~Radon,
{Zur mechanischen Kubatur},
Monatsh.\ Math.\ {\bf 52} (1948) 286--300.

\bibitem{R11}
L.~Rafayelyan,
{Poised nodes set constructions on algebraic curves},
East J.\ Approx.\ {\bf 17} (2011) 285--298.

\bibitem{S}
F. Severi,
   {Vorlesungen \"uber Algebraische Geometrie,} Teubner, Berlin,
   1921 (Translation into German - E. L\"offler).

\bibitem{ST}
S.~Toroyan, {On a conjecture in bivariate interpolation}, accepted
in Proc. of YSU.


\end{thebibliography}
\end{document}